\numberwithin{equation}{section}
\theoremstyle{plain}
\newtheorem{theorem}{\indent Theorem}[section]
\newtheorem{proposition}[theorem]{\indent Proposition}
\newtheorem{corollary}[theorem]{\indent Corollary}
\theoremstyle{definition}
\newtheorem{definition}[theorem]{\indent Definition}
\newtheorem{example}[theorem]{\indent Example}
\newtheorem{remark}[theorem]{\indent Remark}
\newcommand{\cd}{{\cdot}}
\newcommand{\e}{{\varepsilon}}
\newcommand{\ot}{{\otimes}}
\newcommand{\hil}{\stackrel{\cdot}{\otimes}}
\newcommand{\cK}{{\cal F}}
\newcommand{\la}{\langle}
\newcommand{\ra}{\rangle}
\newcommand{\id}{{\bf 1}}
\newcommand{\di}{\diamondsuit}
\newcommand{\q}{\quad}
\newcommand{\qq}{\qquad}
\newcommand{\va}{\varphi}
\newcommand{\rr}{{\cal R}}
\newcommand{\ii}{\infty}
\newcommand{\mt}{\mapsto}
\newcommand{\co}{{\mathbb C}}
\newcommand{\bb}{{\cal B}}
\newcommand{\al}{{\alpha}}     
     \newcommand{\lm}{{\lambda}}
\newcommand{\om} {{\omega}}
\newcommand{\N}{{\mathbb N}}  
\def\x{\relax\ifmmode {\mbox{*}}\else*\fi}
\newcommand{\CB}{\mathcal{CB}}
\newcommand{\ed}{\end{document}}
\begin{document}
\pagestyle{myheadings} \markboth{\centerline{\small{\sc
            A.~Ya.~Helemskii}}}
         {\centerline{\small
         {\sc Projective tensor product of protoquantum spaces}}} 
 \title{\bf Projective tensor product of protoquantum spaces}
 \footnotetext{Keywords: Proto-quantum spaces, quantum spaces, proto-quantum $L_p$--spaces, 
 proto-operator-projective tensor product.}
  \footnotetext{Mathematics Subject
 Classification (2010): 46L07, 46M05.}
 \footnotetext{This research was supported by the Russian
Foundation for Basic Research (grant No. 15-01-08392).}

\author{\bf A.~Ya.~Helemskii}

\date{}

\maketitle

\begin{abstract}
A proto-quantum space is a (general) matricially normed space in the sense of Effros and Ruan 
presented in a `matrix-free' language. We show that these spaces have a special (projective) tensor 
product possessing the universal property with respect to completely bounded bilinear operators. We 
study some general properties of this tensor product (among them a kind of adjoint associativity), 
and compute it for some tensor factors, notably for $L_1$ spaces. In particular, we obtain what 
could be called the proto-quantum version of the Grothendieck theorem about classical projective 
tensor products by $L_1$ spaces. At the end, we compare the new tensor product with the known 
projective tensor product of operator spaces, and show that the standard construction of the latter 
is not fit for general proto-quantum spaces. 
\end{abstract}

\setcounter{section} {0} 
\section{Introduction}
In their paper~\cite{er1}, Effros and Ruan introduced and investigated the important notion of a 
matricially normed space. Very soon, after the discovery of Ruan Representation Theorem~\cite{ru2}, 
 the great majority of papers and monographs was dedicated only to the outstanding special class 
of these structures, the ${\cal L}^\ii$--matricially normed spaces. Now the latter are called 
abstract operator spaces (or just operator spaces), and sometimes quantum spaces. The theory of 
operator spaces is very rich and well-developed. It is presented in widely known  
textbooks~\cite{efr,paul,pis,blem}). 

 On the other hand, already in~\cite{er1,ru2} it was demonstrated that  matricially normed 
spaces are a subject of considerable interest even outside the class of operator spaces, that is 
without the assuming of the second axiom of Ruan. In this paper we return to general matricially 
normed spaces; however, presented in the equivalent `non-coordinate', or `matrix-free' language. 
(The latter seems to be more convenient for us in this circle of questions). We hope that our 
observations, apart from some results in the cited papers, also show that general matricially 
normed spaces deserve an independent interest. Moreover, in their study we sometimes come to things 
that look very different from what we know about operator spaces. 

Our main point is that the general matricially normed spaces, called in this paper proto-quantum 
spaces, have a tensor product, possessing the universal property relative to the class of 
completely bounded bilinear operators. In the context of operator spaces, such a tensor product was 
discussed in~\cite[II.7.1]{efr} and, in the non-coordinate language, in~\cite[Ch.7.2]{heb2}. 
Therefore, following the terminology of these textbooks, we call this tensor product projective. 
(Note that another kind of tensor product, the Haagerup tensor product, was already discussed 
in~\cite{er1,ru2}). 

The contents of the paper are as follows. 
 
 The second and third sections contain initial definitions, notably of a proto-quantum space, 
 of a completely 
 bounded operator and of a completely bounded bilinear operator. Also the simplest examples are
 presented, in particular, the maximal and minimal proto-quantization of a given normed space.
 Among several observations, we show that the maximal proto-quantization always gives rise to an
 ${\cal L}^1$--space, and that every bounded functional with the domain an ${\cal 
 L}^{p}$--space and with the range $\co$ that was made an ${\cal L}^q$--space, is `automatically' 
 completely  bounded, provided $p\le q$. (As to the notation ` ${\cal L}^q$ ', here and thereafter we 
 mean the non--coordinate counterpart of the notion of a matricially normed
 ${\cal L}^p$--space, initially introduced in~\cite{er1}).

In Section 4 we consider several further examples of proto-quantum not quantum spaces. In 
particular, we introduce what we call the standard proto-quantization of the space $L_p(X,E)$, 
where $E$ is a proto-quantum space. (Among them one can find the ${\cal L}^p$--space $L_p(X,E)$, 
where $E$ is an ${\cal L}^p$--space). Also we show that some bilinear operators, related to these 
spaces, are completely contractive; this will be used in subsequent sections. 

 In Section 5  we define the non-completed projective tensor product of proto-quantum spaces, 
 denoted by ` $\ot_{pop}$ ', and its `completed' version, denoted by ` $\widehat\ot_{pop}$ '. We prove 
 the respective existence theorems by displaying relevant explicit constructions. 
 
In Section 6 we present some examples of the computation of the introduced tensor product. It turns 
out that, just as in the case of the classical projective tensor product of normed spaces, the 
especially nice tensor factors are $L_1$--spaces. As the base of most applications, we show that 
 for all proto-quantum spaces $E$ and $F$ we have 
\[
L_1(X,E)\widehat\ot_{pop}L_1(Y,F)\simeq L_1(X\times Y,E\widehat\ot_{pop}F).
\]
(Here and thereafter ` $\simeq$ ' means a completely isometric isomorphism). Another frequently 
used fact is that, under some assumptions on  $E$ and, $F$ we have 
\[
E\widehat\ot_{pop}F\simeq E\widehat\ot_{pr}F,
\]
where on the right is the classical projective tensor product of our spaces, made a proto-quantum 
space according to a certain recipe in Section 4. Combining these two theorems, we obtain that for 
a $p$--convex proto-quantum space $E$ (in particular, an ${\cal L}^p$--space) and the complex 
plane, considered as an ${\cal L}^p$--space, we have 
\[
L_1(X,\co)\widehat\ot_{pop}E\simeq L_1(X,E).
\] 
This result can be considered as a version, for proto-quantum spaces, of the Grothendieck Theorem 
on tensoring by $L_1$-spaces (cf., e.g.,~\cite[\S2, n$^o$2]{gro}).

 At the beginning of the next chapter we extend to general proto-quantum 
spaces the method of the quantization of a given space of completely bounded operators, first 
suggested in papers~\cite[p.140]{ro},~\cite{blp},~\cite{er2}; see also the textbooks 
~\cite[I.3.2]{efr} or~\cite[Ch.8.7]{heb2}. Then we establish the suitable form of the so-called law 
of adjoint associativity, connecting spaces of operators with tensor products. (The form of that 
law in the context of the classical functional analysis is presented, e.g., 
in~\cite[Ch.6.1]{heb2}). Namely, for proto-quantum spaces $E,F,G$, the space 
$\mathcal{CB}(E\ot_{pop}F,G)$ is, in a natural way, (completely) isometrically isomorphic to 
$\mathcal{CB}(F,\mathcal{CB}(E,G))$ and to $\mathcal{CB}(E,\mathcal{CB}(F,G))$. 
 
 (We recall that the mentioned method essentially differs from the initial 
approach to what to call a dual matricially normed space. This approach was considered 
in~\cite{er1}), and, as it was shown there, has some advantages. However, its essential drawback is 
that it does not lead to the  adjoint associativity). 
 
In the concluding Section 7 we compare the introduced tensor product ` $\ot_{pop}$ ' with what 
could be called its prototype. By this we mean the well-known projective tensor product of operator 
spaces, denoted here by ` $\ot_{op}$ ', that was independently discovered by 
Blecher/Paulsen~\cite{blp} and Effros/Ruan~\cite{er2}. For operator spaces (i.e. when the second 
axiom of Ruan is fulfilled)  both tensor products coincide. However, for general proto-quantum 
spaces the standard formulae for the respective norms give different numbers: in the case of ` 
$\ot_{op}$ ' they are essentially greater than in the case of ` $\ot_{pop}$ '. As an example, we 
consider the projective tensor square of a certain proto-quantum space, and for every $n$ we 
display an element of its amplification, for which the first number is $n^2$, whereas the second is 
$n$. 

 \section{Proto-quantum spaces and their first examples}

As it was said, we use in this paper the so-called non-coordinate approach to the structures in 
question, and not the more widespread `matrix' approach, as in the 
textbooks~\cite{efr,paul,pis,blem}. 
Some of our terms and notation are contained in~\cite{heb2}, where practically only (abstract) 
operator spaces, called there quantum spaces, were 
 considered. For the convenience of the reader, we shall briefly repeat some of the most needed initial 
 definitions.

\bigskip
To begin with, we choose an arbitrary separable infinite-dimensional Hilbert space, denote it by 
$L$ and fix it throughout the whole paper. We write $\bb$ instead of $\bb(L)$, the Banach algebra 
of all bounded operators on $L$ with the operator norm, usually denoted just by $\|\cd\|$. The 
symbol $\ot$ is used for the (algebraic) tensor product of linear spaces and for 
 elementary tensors. The symbols $\ot_{pr}$ and $\ot_{in}$ denote the non-completed projective and 
 injective  tensor product of normed spaces, respectively.
The complex-conjugate space of a linear space $E$ is denoted by $E^{cc}$. The identity operator on 
a linear space $E$ is 
 denoted by $\id_E$, and we write $\id$ instead of $\id_L$. 

For $\xi,\eta\in L$ we denote by $\xi\circ\eta$ the rank 1 operator on $L$, taking $\zeta$ to 
$\la\zeta,\eta\ra\xi$ . Recall that $\|x\circ y\|=\|x\|\|y\|$. 

Denote by $\cK$ the (non-closed) two-sided ideal of $\bb$, consisting of finite rank bounded 
operators. Recall that there is a linear isomorphism $L\ot L^{cc}\to\cK$, well defined by taking 
$\xi\ot\eta$ to $\xi\circ\eta$. For $p\in[1,\ii]$ we denote by $\|\cd\|_p$ the norm  of the $p$-th 
Schatten class on $\cK$, and write $\cK_p:=(\cK,\|\cd\|_p$); in particular, $\cK_\ii$ is $\cK$ with 
the operator norm. 

\medskip
In what follows we need the triple notion of the so-called amplification. First, we amplify linear 
spaces, then linear operators and finally bilinear operators. 

The {\it amplification} of a given linear space $E$ 
 is the tensor product $\cK\ot E$. Usually we briefly denote it by $\cK E$, and an elementary
tensor $a\ot x; a\in\cK, x\in E$, by $ax$. Note that $\cK E$ is a bimodule over the algebra $\bb$ 
with the outer multiplications, denoted by ` $\cd$ ' and well defined by $a\cd(bx):=(ab)x$ and 
$(ax)\cd b:=(ab)x$. 
\begin{definition}
 A semi-norm on $\cK E$ is called {\it proto-quantum
semi-norm}, or briefly, {\it $PQ$--semi-norm} on $E$, if the $\bb$-bimodule $\cK E$ is contractive, 
that is we always have the estimate $\|a\cd u\cd b\|\le\|a\|\|u\|\|b\|$. A $PQ$--semi-norm on $E$ 
is called {\it quantum semi-norm}, or briefly, {\it $Q$--semi-norm} on $E$, if for  $u,v\in\cK E$ 
and (ortho)projections $P,Q\in\bb, PQ=0$ we always have $\|P\cd u\cd P+Q\cd v\cd Q\|=\max\{\|P\cd 
u\cd P\|,\|Q\cd v\cd Q\|\}$. 

The space $E$, endowed with a $PQ$--semi-norm, is called {\it semi-normed proto-quantum space}, or 
briefly, {\it semi-normed $PQ$--space}. In the case of a normed $PQ$--space we usually omit the 
word `normed'. 

In a similar way we use the terms {\it semi-normed $Q$--space} and (just) {\it $Q$--space}.
\end{definition}
\begin{remark}
By their definition, $Q$--spaces can be treated as a special case of the so-called Ruan bimodules, 
considered in~\cite{her} and~\cite{wit}. 
\end{remark}
\begin{remark}
Let us recall, for the convenience of the reader, the way of the translation from the `matrix 
language' to the `non-coordinate language'. Let $E$ be a matricially normed space in the sense 
of~\cite{er1}, and we are given $u\in\cK E$. Clearly, there exists a finite rank projection $P$ 
such that $u$ has the form $\sum_{k+1}^na_kx_k; a_k=P\cd a_k\cd P, x_k\in E$. We choose an 
arbitrary orthonormal basis in $P(L)$ and denote by $(a^k_{ij})$ the matrix, in this basis, of the 
restriction $a_k$ to $P(L)$. Then we take the matrix $(u_{ij}:=\sum_ka^k_{ij}x_k)$ with entries in 
$E$ and set $\|u\|:=\|(u_{ij})\|$. It is easy to show that $\|u\|$ does not depend  on the choice 
of $P$ and of a basis in $P(L)$, and that the function $u\mt\|u\|$ is a $PQ$--norm on $E$. 
\end{remark}

 A semi-normed $PQ$--space $E$ becomes a semi-normed space (in the usual sense), if for $x\in E$ 
we set $\|x\|:=\|Qx\|$, where $Q$ is an arbitrary rank 1 operator of norm 1. Obviously, the 
resulting semi-norm does not depend on the particular choice of $Q$. The obtained semi-normed space 
is called {\it underlying space} of a given $PQ$--space, and the latter is called a {\it 
proto-quantization} (briefly $P$--quantization) or, if we deal with a $Q$--space, a {\it 
quantization} of the former. (The term `quantization' ascends to the seminal Effros' 
lecture~\cite{ef5}. Indeed, in the space $E=\co\ot E$ commutative scalars from $\co$ are replaced 
by the `non-commutative scalars' from $\cK$; a typical device of `quantum mathematics'). 
\begin{proposition}
Let $E$ be a semi-normed $PQ$--space with a normed underlying space. Then the semi-norm on $\cK E$ 
    is a norm.
\end{proposition}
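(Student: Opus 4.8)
The plan is to show that $0$ is the only element $u\in\cK E$ with $\|u\|=0$; normedness then follows.

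First I would invoke the reduction recorded in Remark~2.3: given $u\in\cK E$, there is a finite rank projection $P$ with $u\in(P\cK P)\ot E$. Fixing an orthonormal basis $\xi_1,\dots,\xi_n$ of $P(L)$ and putting $e_{ij}:=\xi_i\circ\xi_j$, the operators $e_{ij}$ form a linear basis of $P\cK P\cong M_n(\co)$; hence $u$ has a \emph{unique} representation $u=\sum_{i,j=1}^n e_{ij}x_{ij}$ with $x_{ij}\in E$, and $u=0$ if and only if all $x_{ij}=0$. So it suffices to show that $\|u\|=0$ forces each $x_{ij}$ to vanish.

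Next, for a fixed pair $(i,j)$ I would isolate the coordinate $x_{ij}$ by a compression. Using the bimodule identity $a\cd(bx)\cd c=(abc)x$ together with $e_{ii}e_{kl}e_{jj}=\delta_{ik}\delta_{lj}e_{ij}$, one obtains $e_{ii}\cd u\cd e_{jj}=e_{ij}x_{ij}$. Contractivity of the $\bb$-bimodule $\cK E$ then gives $\|e_{ij}x_{ij}\|\le\|e_{ii}\|\,\|u\|\,\|e_{jj}\|=0$, since $\|e_{ii}\|=\|e_{jj}\|=1$. Now $e_{ij}=\xi_i\circ\xi_j$ is itself a rank $1$ operator of norm $1$, so by the very definition of the underlying semi-norm (and its independence of the chosen rank $1$ norm $1$ operator, noted just before the proposition) we have $\|x_{ij}\|=\|e_{ij}x_{ij}\|=0$ in the underlying space. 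As the latter is assumed to be normed, $x_{ij}=0$ for every pair $(i,j)$, whence $u=0$, as required.

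I do not expect a genuine obstacle here; the two points deserving care are the uniqueness of the matrix-coordinate representation of $u$ (which rests on $P\cK P$ being finite-dimensional with basis $\{e_{ij}\}$, so that the coordinates $x_{ij}$ are well defined) and the identity $\|e_{ij}x_{ij}\|=\|x_{ij}\|$, which is immediate once one observes that $e_{ij}$ is an admissible choice of rank $1$ norm $1$ operator in the definition of the underlying semi-norm.
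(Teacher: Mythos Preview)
Your argument is correct. The paper itself gives no proof here but simply refers to~\cite[Prop.~1.2.2]{heb2}, remarking that the argument there for $Q$--spaces carries over verbatim to $PQ$--spaces; your matrix-coordinate compression (isolating each $x_{ij}$ via $e_{ii}\cd u\cd e_{jj}$ and invoking contractivity together with the definition of the underlying norm) is exactly the standard argument one expects to find in that reference.
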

 The proof, given in~\cite[Prop. 1.2.2]{heb2} for $Q$--spaces is valid, without any modification, 
 for $PQ$--spaces too.

%
%

\begin{example} \label{ex1}
 Every non-zero normed space, say $E$, has a lot of $P$--quantizations. Among them we distinguish 
 the so-called
{\it maximal} and {\it minimal}, denoted by $E_{\max}$ and $E_{\min}$,  respectively. The first 
space is obtained by the endowing $\cK E$ with the norm of $L\ot_{pr}L^{cc}\ot_{pr} E=\cK_1\ot_{pr} 
E$, and the second with the norm of $L\ot_{in}L^{cc}\ot_{in}E=\cK_\ii\ot_{in}E$. (Evidently, the 
first of these $PQ$--norms is never a $Q$--norm  whereas it is not difficult to show that the 
second one is always a $Q$--norm.) 

As a matter of fact, the first norm is maximal in the sense that it is the greatest of all norms of 
$P$--quantizations of $E$. Indeed, we easily see that the norm on $L\ot L^{op}\ot E$, corresponding 
to any given $PQ$--norm on $E$, is a cross-norm. But among all cross-norms there is a greatest one, 
and it is exactly the norm on $L\ot_{pr}L^{op}\ot_{pr}E$. In a similar sense the second norm is 
minimal, but this statement will be justified a little bit later. 

As to $E_{\max}$, it is a member of the whole family of $P$--quanizations of $E$, denoted by 
${^{(p)}}E; 1\le p\le\ii$; they are obtained by endowing $\cK E$ with the norm of 
${\cK_p}{\ot_{pr}}E$. Clearly, we have ${^{(1)}}E=E_{\max}$. In particular, among various 
$P$--quantizations of $\co$ we distinguish $PG$--spaces ${^{(p)}}\co$; we see that the 
amplification of such a space is identified with $\cK_p$. 
Moreover, there is only one $P$--quantization of $\co$ which is a quantization, and this is 
${^{(\ii)}}\co$. 
\end{example}

\medskip
In what follows, if numbers $\lm_k\ge0; k=1,...,n$ are given, we shall understand the expression 
 $(\sum_{k=1}^n\lm_k^p)^{\frac{1}{p}}$ as $\max\{\lm_1,...,\lm_n\}$ in the case $p=\ii$.

\medskip
We shall say that a projection $P\in\bb$ is a support of an element $u\in\cK E$, if we have $P\cd 
u\cd P=u$. 

For $p\in[1,\ii]$ we shall say that a $PQ$--space $E$ is an {\it ${\cal L}^p$--space}, 
respectively, {\it $p$--convex space} and {\it $p$--concave space}, if for every 
$u_1,\dots,u_n\in\cK E$ with pairwise orthogonal supports
we have $\|\sum_{k=1}^nu_k\|=(\sum_{k=1}^n\|u_k\|^p)^{\frac{1}{p}}$,   
respectively, $\|\sum_{k=1}^nu_k\|\le(\sum_{k=1}^n\|u_k\|^p)^{\frac{1}{p}}$, 
$\|\sum_{k=1}^nu_k\|\ge(\sum_{k=1}^n\|u_k\|^p)^{\frac{1}{p}}$. Obviously, it is sufficient to have 
similar relations for the case $n=2$. We see that ${\cal L}^\ii$--space is just another name for a 
$Q$--space. Clearly, every $PQ$--space is 1--convex and $\ii$--concave. Moreover, ${^{(p)}}\co$ is 
evidently an  ${\cal L}^p$--space. 

Throughout the paper, for $a,b\in\cK$ we shall write $a\approx b$ provided we have $SaT=b$ for some 
unitary operators $S,T\in\bb$. Similarly, for $u,v\in\cK E$, where $E$ is a $PQ$-- space, we shall 
write $u\approx v$ provided $S\cd u\cd T=v$ for $S,T$ as above. Clearly, $a\approx b$ implies 
$\|a\|_p=\|b\|_p$ for all $p\in[1,\ii]$, and $u\approx v$ implies $\|u\|=\|v\|$. It is well known 
(and easy to show) that for every $a\in\cK$ we have

\begin{align}
a\approx h, \q {\rm where} \q h=\sum_{k=1}^ns_kP_k
\end{align}
 for some pairwise orthogonal rank one projections $P_k\in\cK$, and $s_k\ge0$.

\begin{proposition}
 If $E$ is a $p$--convex $PQ$--space, $p$--concave $PG$--space or an ${\cal L}^p$--space, then, for all 
 $a\in \cK, x\in E$, we have $\|ax\|\le\|a\|_p\|x\|$, $\|ax\|\ge\|a\|_p\|x\|$ or $\|ax\|=
 \|a\|_p\|x\|$, respectively. In  particular, for every $E$ we have $\|ax\|\le\|a\|_1\|x\|$.
\end{proposition}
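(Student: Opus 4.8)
The plan is to diagonalize $a$ by means of the decomposition (2.1) and then read off the required (in)equality directly from the defining property of a $p$--convex (respectively $p$--concave, respectively ${\cal L}^p$--) space. Thus the whole argument is a one-step reduction to a ``diagonal'' element, followed by a short bookkeeping.

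First I would fix $a\in\cK$ and $x\in E$ and, using (2.1), choose unitaries $S,T\in\bb$ with $SaT=h:=\sum_{k=1}^ns_kP_k$, where the $P_k$ are pairwise orthogonal rank one projections and $s_k\ge0$. Since $S\cd(ax)\cd T=(SaT)x=hx$ and $S,T$ are unitary, we have $ax\approx hx$, hence $\|ax\|=\|hx\|$; likewise $a\approx h$ gives $\|a\|_p=\|h\|_p$, and because in an orthonormal basis adapted to the $P_k$ the operator $h$ is diagonal with entries $s_1,\dots,s_n$, we get $\|h\|_p=(\sum_{k=1}^ns_k^p)^{1/p}$, with the convention for $p=\ii$ fixed earlier in the text. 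So it suffices to prove the statement for $a=h$.

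Next I would write $hx=\sum_{k=1}^nu_k$ with $u_k:=(s_kP_k)x$. Each $u_k$ has $P_k$ as a support, since $P_k\cd(s_kP_k)x\cd P_k=(s_kP_k)x$, and the supports $P_1,\dots,P_n$ are pairwise orthogonal; moreover $P_k$ is a rank one operator of norm $1$, so by the definition of the underlying semi-norm $\|P_kx\|=\|x\|$, whence $\|u_k\|=s_k\|x\|$. Substituting this into the defining relation $\|\sum_ku_k\|\le(\sum_k\|u_k\|^p)^{1/p}$ of a $p$--convex space gives
\[
\|ax\|=\|hx\|\le\Bigl(\sum_{k=1}^ns_k^p\|x\|^p\Bigr)^{1/p}=\|x\|\Bigl(\sum_{k=1}^ns_k^p\Bigr)^{1/p}=\|a\|_p\|x\|,
\]
and the same line, with $\ge$ or $=$ in place of $\le$, disposes of the $p$--concave and ${\cal L}^p$ cases. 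The final assertion is then the first case applied with $p=1$, since every $PQ$--space is $1$--convex.

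I do not expect a genuine obstacle here. The only two points deserving a word of care are the equality $\|P_kx\|=\|x\|$ (immediate, as $P_k$ has rank $1$ and norm $1$, so it is an admissible choice of the norm-one rank one operator in the definition of the underlying semi-norm) and the identification of the $p$--Schatten norm of $h$ with $(\sum_ks_k^p)^{1/p}$; the degenerate cases $a=0$ (empty sum) and $s_k=0$ are harmless, all the quantities involved being zero.
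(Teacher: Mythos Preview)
Your proof is correct and follows essentially the same route as the paper's own proof: diagonalize $a$ via (2.1), observe that the summands $s_kP_kx$ have pairwise orthogonal supports $P_k$ with $\|s_kP_kx\|=s_k\|x\|$, and apply the defining (in)equality of a $p$--convex (respectively $p$--concave, ${\cal L}^p$--) space. Your write-up simply spells out a few more details (e.g.\ why $\|P_kx\|=\|x\|$) that the paper leaves implicit.
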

 \begin{proof}
Let $h$ be as in (2.1). Then we have $ax\approx hx$. But  $hx=\sum_{k=1}^ns_kP_kx$, where the  
summands have pairwise orthogonal supports, namely $P_k$. 
  Therefore in the `convex' case we have $\|ax\|=\|hx\|\le
 (\sum_{k=1}^n\|s_kP_kx\|^p)^{\frac{1}{p}}=(\sum_{k=1}^ns_k^p)^{\frac{1}{p}}\|x\|$, where, as we recall,
 $(\sum_{k=1}^ns_k^p)^{\frac{1}{p}}$ is just $\|h\|_p$, that is $\|a\|_p$. 
 The remaining cases are treated in a similar way.
 \end{proof}

%

\begin{example}
We shall show that the maximal $P$--quantization of a given normed space $E$ is an ${\cal 
L}^1$--space. (And thus every normed space can be made an ${\cal L}^1$--space). 

Indeed, consider orthogonal projections $P,Q\in\bb$ and the subspaces $\cK_1^P:=\{PaP; a\in\cK\}$, 
$\cK_1^Q:=\{QaQ; a\in\cK\}$ and $\cK_1^{P,Q}=\{PaP+QaQ;a\in\cK\}$ in $\cK_1$. Clearly, we have 
$\cK_1^{P,Q}=\cK_1^P\oplus_1\cK_1^Q\in\cK_1$, where ` $\oplus_1$ ' is a sign of the $\ell_1$--sum 
of normed spaces. 

It is well known (and easy to check) that the operator $j:\cK_1\to \cK_1^{P,Q}:a\mt Pap+QaQ$ is 
contractive (in fact, it is a norm 1 projection). Consider the operators 
$j\ot_{pr}\id_E:\cK_1\ot_{pr}E\to\cK_1^{P,Q}\ot_{pr}E$, and also 
$i\ot_{pr}\id_E:\cK_1^{P,Q}\ot_{pr}E\to\cK_1\ot_{pr}E$, where $i$ is the respective natural 
embedding. Both of them, being projective tensor product of contractive operators, are contractive 
themselves. But their composition is evidently the identity operator on $\cK_1^{P,Q}\ot_{pr}E$. It 
follows that $i\ot_{pr}\id_E$ is an isometry (whereas $j\ot_{pr}\id_E$ is a strict coisometry). 

Now suppose that $u,v\in\cK_1\ot_{pr}E$ have $P$ and $Q$ as their respective supports. Observe that 
for every $w\in\cK_1\ot_{pr}E$ the equality $w=P\cd w\cd P+Q\cd w\cd Q$ means exactly that 
$w\in\cK_1^{P,Q}$. Consequently, elements $u,v$ and $u+v$ have the same norms in 
$\cK_1^{P,Q}\ot_{pr}E$ as in $\cK_1\ot_{pr}E$. 
 
Finally, recall the known connection between the operations ` $\oplus_1$ ' and \\ ` $\ot_{pr}$ '. 
In our situation we have the isometric isomorphism 
$I:\cK_1^{P,Q}\ot_{pr}E\to(\cK_1^{P}\ot_{pr}E)\oplus_1(\cK_1^{Q}\ot_{pr}E)$, well defined by taking 
$(a+b)\ot x$ to $a\ot x+b\ot x$. But for $u,v$ as of elements of $\cK_1^{P,Q}\ot_{pr}E$ we see that 
$I(u)\in\cK_1^{P}\ot_{pr}E$ and $I(v)\in\cK_1^{Q}\ot_{pr}E$. Therefore 
\[
\|u+v\|=\|I(u+v)\|=\|I(u)+I(v)\|=\|I(u)\|+\|I(v)\|=\|u\|+\|v\|,
\]
 and we are done. 
\end{example}

Note, however, that the $PG$--space ${^{(p)}}E$ for $E\ne\co$ and $p>1$ is, generally speaking, not 
an ${\cal L}^p$--space. 

\begin{remark}
Recall that numerous examples, all of them concerning $Q$--spaces, are presented in the cited 
textbooks. They include the example, which is the most important in the whole theory of 
$Q$--spaces, being, in a sense, universal~\cite{ru2}~\cite{er5}. This is the so-called concrete 
quantization of a space, consisting of operators. But we do not need this material in the present 
paper. 
\end{remark}

\section{Completely bounded linear and bilinear operators}

Suppose that we are given an operator $\va:E\to F$
between linear spaces. The {\it amplification of $\va$} is the operator $\va_\ii: 
\cK E\to\cK F$, well defined on elementary tensors by $ax\mt a\va(x)$. Clearly, $\va_\ii$ is a 
morphism of $\bb$-bimodules. 

\begin{definition}
 An operator $\va$, connecting  semi-normed $PQ$--spaces, is called {\it completely bounded}, respectively, 
 {\it completely contractive}, if its amplification is bounded, respectively, contractive in the 
 usual sense. We set $\|\va\|_{cb}:=\|\va_\ii\|$.
\end{definition}
In a similar way we define the notions of a {\it completely isometric operator} and of a {\it 
completely isometric isomorphism.} 

 \medskip
If $\va$ is bounded, being considered in the context of the respective underlying semi-normed 
spaces, we say that it is (just) {\it bounded} and denote its respective operator semi-norm, as 
usual, by $\|\va\|$. Every completely bounded linear operator is obviously bounded, and we have 
$\|\va\|\le\|\va\|_{cb}$. 

Denote by $\CB(E,F)$ the subspace in $\bb(E,F)$, consisting of completely bounded linear operators. 
It is a normed space with respect to the norm $\|\cd\|_{cb}$. 

\medskip
Some linear operators between $PQ$--spaces that are bounded, 
are `automatically' completely bounded. Here is an observation of that kind. 

\begin{proposition}
 Let $E$ be an ${\cal L}^p$--space or, more general, $p$--concave $PQ$--space for some $p\in[1,\ii]$ 
Then every bounded functional $f:E\to{^{(q)}}\co$, where $q\ge p$, is completely bounded, and 
$\|f\|_{cb}:=\|f\|$. 
\end{proposition}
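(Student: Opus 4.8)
The plan is to prove the nontrivial inequality $\|f\|_{cb}\le\|f\|$; the reverse $\|f\|\le\|f\|_{cb}$ holds for every completely bounded operator. Since the amplification of $^{(q)}\co$ is $\cK$ equipped with the Schatten norm $\|\cd\|_q$, this amounts to showing
\[
\|f_\ii(u)\|_q\le\|f\|\,\|u\|\qquad(u\in\cK E),
\]
where $f_\ii:\cK E\to\cK$ is given by $ax\mt af(x)$. Fix $u$ and put $b:=f_\ii(u)\in\cK$. By (2.1) there are unitaries $S,T\in\bb$ with $SbT=h$, where $h=\sum_{k=1}^{n}s_kP_k$ with pairwise orthogonal rank one projections $P_k$ and $s_k\ge0$. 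Setting $u':=S\cd u\cd T$ we get $f_\ii(u')=SbT=h$, because $f_\ii$ is a $\bb$-bimodule morphism, while $\|u'\|=\|u\|$ since $u\approx u'$. Writing $P:=\sum_{k=1}^{n}P_k$ and replacing $u'$ by $P\cd u'\cd P$ leaves $f_\ii(u')=h$ unchanged (as $PhP=h$) and does not increase $\|u'\|$; so I may assume in addition that $P\cd u'\cd P=u'$.

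Next I would read off vectors of $E$. For each $k$, the element $P_k\cd u'\cd P_k$ is supported on the rank one projection $P_k$, hence is of the form $P_kx_k$ for a unique $x_k\in E$. Applying $f_\ii$ and using $P_khP_k=s_kP_k$ gives $f(x_k)P_k=s_kP_k$, so $f(x_k)=s_k$ and therefore $s_k\le\|f\|\,\|x_k\|$. Also $\|x_k\|=\|P_kx_k\|=\|P_k\cd u'\cd P_k\|\le\|u'\|=\|u\|$ by contractivity of the bimodule. The crucial object is the ``pinched'' element $v:=\sum_{k=1}^{n}P_k\cd u'\cd P_k=\sum_{k=1}^{n}P_kx_k$. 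Its summands have pairwise orthogonal supports $P_k$, so $p$-concavity of $E$ gives $\|v\|\ge\bigl(\sum_{k=1}^{n}\|x_k\|^{p}\bigr)^{1/p}$. On the other hand, with $\om$ a primitive $n$-th root of unity and $U:=\sum_{k=1}^{n}\om^{k}P_k+(\id-P)$ (a unitary on $L$), a direct computation using $P\cd u'\cd P=u'$ shows $v=\frac1n\sum_{j=0}^{n-1}U^{j}\cd u'\cd(U^{\x})^{j}$ (since $P_mU^{j}=\om^{jm}P_m$, conjugation by $U^{j}$ multiplies the block $P_k\cd u'\cd P_l$ by $\om^{j(k-l)}$, and averaging over $j$ annihilates the off-diagonal blocks); as $u'\approx U^{j}\cd u'\cd(U^{\x})^{j}$, each summand has norm $\|u'\|=\|u\|$, so the triangle inequality yields $\|v\|\le\|u\|$. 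Hence $\bigl(\sum_{k=1}^{n}\|x_k\|^{p}\bigr)^{1/p}\le\|u\|$.

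Finally, $b\approx h$ gives $\|b\|_q=\|h\|_q=\bigl(\sum_{k=1}^{n}s_k^{q}\bigr)^{1/q}$ (with the $\max$ convention when $q=\ii$, $s_1,\dots,s_n$ being the singular values of $h$), so, using $s_k\le\|f\|\,\|x_k\|$ and the inequality between the $\ell^{q}$- and $\ell^{p}$-norms valid for $q\ge p$,
\[
\|f_\ii(u)\|_q=\|b\|_q=\Bigl(\sum_{k=1}^{n}s_k^{q}\Bigr)^{1/q}\le\|f\|\Bigl(\sum_{k=1}^{n}\|x_k\|^{q}\Bigr)^{1/q}\le\|f\|\Bigl(\sum_{k=1}^{n}\|x_k\|^{p}\Bigr)^{1/p}\le\|f\|\,\|u\|.
\]
This gives $\|f\|_{cb}\le\|f\|$, hence equality. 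I expect the one genuinely delicate point to be the pinching identity $v=\frac1n\sum_jU^{j}\cd u'\cd(U^{\x})^{j}$ together with the preliminary reduction $P\cd u'\cd P=u'$ that makes it valid; the remaining steps are a routine assembly of the definitions (including that of the underlying norm) and of (2.1).
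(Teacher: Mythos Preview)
Your proof is correct and follows essentially the same route as the paper's: diagonalize $f_\ii(u)$ via unitaries, pinch the transformed element to its diagonal blocks $P_k\cd u'\cd P_k=P_kx_k$ using an averaging over roots of unity, bound $\|v\|\le\|u\|$ and $\|v\|\ge(\sum\|x_k\|^p)^{1/p}$ by $p$-concavity, then finish with $s_k\le\|f\|\,\|x_k\|$ and the $\ell^q\le\ell^p$ inequality. The only cosmetic difference is that the paper uses the norm-one operators $W_m=\sum_k\zeta^{mk}P_k$ directly (so no preliminary reduction $P\cd u'\cd P=u'$ is needed), whereas you extend them to full unitaries $U^j$, which is why you first pass to $P\cd u'\cd P$; both variants are equally valid.
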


\begin{proof}
 Take an arbitrary $u\in\cK E$. 
  Setting in (2.1) $a:=f_\ii(u)$ and $v:=S\cd u\cd T$, where $S,T$ are relevant unitary operators, 
 we see that for some pairwise orthogonal rank 1 projections $P_k$ and $s_k\ge0$ we have

\begin{align}
f_\ii(v)=\sum_ks_kP_k, \q \|u\|=\|v\|\q {\rm and} \q \|f_\ii(u)\|_q=
\|f_\ii(v)\|_q=(\sum_{k=1}^ns_k^q)^{\frac{1}{q}}.
\end{align}
Therefore it suffices to prove that 
$(\sum_{k=1}^ns_k^q)^{\frac{1}{q}}\le\|f\|\|v\|$.

 Denote by $\zeta$ the prime $n$-th root of 1 and set, 
for $m=1,\dots,n$, $W_m:=\sum_{k=1}^n\zeta^{mk}P_k$ and $W'_m:=\sum_{k=1}^n\zeta^{-mk}P_k$. Then a 
routine calculation shows that for all $a\in\bb$ we have 
$\sum_{m=1}^nW'_maW_m=n(\sum_{k=1}^nP_kaP_k)$. Hence, representing $v$ as a sum of elementary 
tensors, we see that $\sum_{m=1}^nW'_m\cd v\cd W_m=n(\sum_{k=1}^nP_k\cd v\cd P_k)$. From this we 
have 
\begin{align}
\|\sum_{k=1}^nP_k\cd v\cd P_k\|\le\frac{1}{n}\sum_{m=1}^n\|W'_m\cd v\cd W_m\|\le
\frac{1}{n}\sum_{m=1}^n\|W'_m\|\|v\|\|W_m\|\le\|v\|.
\end{align}

Since $P_kbP_k$ is proportional to $P_k$ for all $b\in\bb$ and $k=1,\dots,n$, we easily see that 
for these $k$ we have $P_k\cd v\cd P_k=P_kx_k $ for some $x_k\in E$. 

Therefore, by (3.1), we have for all $k$ that
\[
s_kP_k=P_kf_\ii(v)P_k=f_\ii(P_k\cd v\cd P_k)=f_\ii(P_kx_k)=f(x_k)P_k,
\]
hence $f(x_k)=s_k$, and consequently
\[
s_k\le\|f\|\|x_k\|=\|f\|\|P_kx_k\|=\|f\|\|P_k\cd v\cd P_k\|.
\]
Therefore, taking into account that $E$ is $p$--concave, we have
 \[
\left(\sum_{k=1}^ns_k^q\right)^{\frac{1}{q}}\le
\|f\|\left(\sum_{k=1}^n\|P_k\cd v\cd P_k\|^q\right)^{\frac{1}{q}}\le
\]
\[ 
\|f\|\left(\sum_{k=1}^n\|P_k\cd v\cd P_k\|^p\right)^{\frac{1}{p}}\le\|f\|\|\sum_{k=1}^nP_k\cd v\cd P_k\|.
\]
It remains to apply (3.2). 
\end{proof}

 In particular (cf.~\cite{er1}), {\it for every $PQ$--space $E$ every bounded functional 
$f:E\to{^{(\ii)}}\co$  is completely bounded, and $\|f\|_{cb}:=\|f\|$.} 

Note that the latter assertion immediately implies that for a normed space $E$ the norm on $\cK E$, 
given by $\|u\|:=\sup\{\|f_\ii(u)\|_\ii;f\in E^*; \|f\|\le1\}$, is the smallest among all norms of 
$P$--quantizations of $E$. But this is exactly the norm on $L\ot_{in}L^{cc}\ot_{in}E$. This 
justifies the word `minimal' for the latter norm (see above). Also we see that we got a $Q$--norm.

 On the other hand, contrary to the situation with the space of all bounded operators,
the space ${\cal CB}(E,F)$ can be very scanty. 
The following observation is taken from~\cite{ru2}. 
 
\begin{proposition}
 Let $E$ be $p$--convex, $F$ be $q$--concave $PQ$--spaces, and $p>q$. Then there 
is no non-zero completely bounded operators from $E$ into $F$. 
\end{proposition}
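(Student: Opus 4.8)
The plan is to show that any completely bounded $\varphi\colon E\to F$ must have $\|\varphi\|_{cb}=0$ by exploiting the mismatch between $p$-convexity of the source and $q$-concavity of the target when $p>q$. The key idea is to take a single element and spread it out over many pairwise orthogonal supports using unitary conjugation, then compare the two estimates one gets from the convexity inequality in $E$ and the concavity inequality in $F$.

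First I would fix a nonzero $x\in E$ and a rank $1$ projection $P$, so that $u_0:=Px\in\cK E$ has norm $\|x\|$. For each $n$, choose $n$ pairwise orthogonal rank $1$ projections $P_1,\dots,P_n$ and unitaries $S_k,T_k\in\bb$ with $S_k\cd u_0\cd T_k$ having support $P_k$; concretely one can send $P$ to $P_k$ by a unitary and use $u\approx v\Rightarrow\|u\|=\|v\|$. Set $u_k:=S_k\cd u_0\cd T_k$, so each $\|u_k\|=\|x\|$ and the $u_k$ have pairwise orthogonal supports. Then $u:=\sum_{k=1}^n u_k\in\cK E$, and by $p$-convexity of $E$ we get $\|u\|\le(\sum_{k=1}^n\|u_k\|^p)^{1/p}=n^{1/p}\|x\|$. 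Now push forward: $\varphi_\ii(u)=\sum_{k=1}^n\varphi_\ii(u_k)$, and since $\varphi_\ii$ is a morphism of $\bb$-bimodules, $\varphi_\ii(u_k)=S_k\cd\varphi_\ii(u_0)\cd T_k$, which again has support $P_k$ and norm $\|\varphi_\ii(u_0)\|$ (same unitary-invariance). So the summands $\varphi_\ii(u_k)$ have pairwise orthogonal supports, and $q$-concavity of $F$ gives $\|\varphi_\ii(u)\|\ge(\sum_{k=1}^n\|\varphi_\ii(u_k)\|^q)^{1/q}=n^{1/q}\|\varphi_\ii(u_0)\|$.

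Combining the two estimates with $\|\varphi_\ii(u)\|\le\|\varphi\|_{cb}\|u\|$ yields
\[
n^{1/q}\|\varphi_\ii(u_0)\|\le\|\varphi\|_{cb}\,n^{1/p}\|x\|,
\]
hence $\|\varphi_\ii(u_0)\|\le\|\varphi\|_{cb}\,n^{1/p-1/q}\|x\|$ for every $n$. Since $p>q$ we have $1/p-1/q<0$, so letting $n\to\ii$ forces $\|\varphi_\ii(u_0)\|=0$, i.e. $\varphi(x)=0$. As $x$ was arbitrary, $\varphi=0$.

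The only delicate point is checking that one really can arrange the $u_k$ to have pairwise orthogonal supports with each $\|u_k\|=\|u_0\|$ — that is, that conjugating $u_0$ by the appropriate unitaries moves its (rank $1$) support onto prescribed pairwise orthogonal rank $1$ projections while preserving the norm; this is immediate from the relation $u\approx v\Rightarrow\|u\|=\|v\|$ recorded in the excerpt, since a unitary can carry any rank $1$ projection to any other. The rest is just combining the convexity and concavity inequalities, so I expect no real obstacle beyond bookkeeping. (One should also note the statement is vacuous unless $E\ne 0$, which we may assume.)
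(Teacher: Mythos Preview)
Your proof is correct and follows essentially the same approach as the paper: spread a single element over $n$ pairwise orthogonal rank~1 supports, apply $p$-convexity on the domain side and $q$-concavity on the target side, and use the growth mismatch $n^{1/q}$ versus $n^{1/p}$. The paper is slightly more direct---it simply takes $u=\sum_{k=1}^m P_k x$ (since each $P_k x$ already has support $P_k$, no unitary conjugation is needed), and it is phrased as the contrapositive (assume $\varphi\ne 0$ and show $\varphi_\ii$ is unbounded)---but these are cosmetic differences.
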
 

\begin{proof}
Let $\va:E\to F$ be an arbitrary non-zero operator; our task is to show that it is not completely 
bounded. Take $x\in E$ with $\va(x)\ne0$. Since $p>q$, for every $n\in\N$ there exist $m\in\N$ such 
that $m^{\frac{1}{q}}>(n\|x\|/\|\va(x)\|)m^{\frac{1}{p}}$. Take pairwise orthogonal rank 1 
projections $P_k; k=1,...,m$ and set $u:=\sum_kP_kx\in\cK E$; then 
$\va_\ii(u)=\sum_kP_k\va(x)\in\cK F$. We see that elements $P_kx\in\cK E$ as well as 
$P_k\va(x)\in\cK F$ have pairwise orthogonal supports. Therefore we have  

\[
\|\va_\ii(u)\|\ge\left(\sum_k\|P_k\va(x)\|^q\right)^{\frac{1}{q}}=
\left(m\|\va(x)\|^q\right)^{\frac{1}{q}}=\|\va(x)\|m^{\frac{1}{q}}>
\]

\[
\|\va(x)\|\frac{n\|x\|}{\|\va(x)\|}m^{\frac{1}{p}}=
n\|x\|m^{\frac{1}{p}}\ge n\left\|\sum_kP_kx\right\|=n\|u\|.
\]
Since $n$ is arbitrary, this means that the operator $\va_\ii$ is not bounded. 
\end{proof}

\medskip
However, most of various known counter-examples (one of the earliest is due to Tomiyama~\cite{tom}) 
concern $Q$--spaces; see the textbooks cited above. 

\bigskip
To amplify bilinear operators (in what follows, we shall say, for brevity, `bioperators'), we shall 
use a certain operation that imitate tensor product of operators on our Hilbert space $L$ but does 
not lead out of $L$. (Within the `matrix' approach we would have to use the Kronecker product of 
matrices). 

In what follows, the symbol $\hil$ is used for the Hilbert tensor product 
 of Hilbert spaces, as well as of bounded operators, acting on these spaces. 
By virtue of Riesz/Fisher Theorem, we can arbitrarily choose a unitary isomorphism $\iota:L\hil 
L\to L$ and fix it throughout the whole paper. Following~\cite{he6}, for $\xi,\eta\in L$ we denote 
the vector $\iota(\xi\ot\eta)\in L$ by $\xi\di\eta$, and for $a,b\in\bb$ we denote the operator 
$\iota(a\hil b)\iota^{-1}$ on $L$ by $a\di b$ ; obviously, the latter is well defined by the 
equality $(a\di b)(\xi\di\eta)= a(\xi)\di b(\eta)$. Evidently, we have the identities 
\begin{align} \label{21}
(a\di b)(c\di d)=ac\di bd, \qq \|\xi\di\eta\|=\|\xi\|\|\eta\| \qq{\rm and}\qq \|a\di b\|=\|a\|\|b\|.
\end{align}
%
%
Now suppose that we are given a bioperator $\rr:E\times F\to G$ between linear spaces. Its {\it 
amplification} is the bioperator $\rr_\ii:\cK E\times\cK F\to\cK G$, well defined on elementary 
tensors by $\rr_\ii(ax,by)=(a\di b)\rr(x,y)$. 
 
\begin{remark}
We do not consider here another, different version of the amplification of a bioperator, that would 
lead us to 
the important notion of the Haagerup tensor product of $PQ$--spaces (cf.~\cite{er1} and, in the 
context of $Q$--spaces,~\cite{ble}~\cite{blp} and also the textbooks~\cite{efr}~\cite{heb2}). 
\end{remark}

\begin{definition}
 A bioperator $\rr$, connecting  semi-normed $PQ$--spaces, is called {\it completely bounded}, 
 respectively, 
 {\it completely contractive} if its amplification is bounded, respectively, contractive in the 
 usual sense. We set $\|\rr\|_{cb}:=\|\rr_\ii\|$. 
\end{definition}

Note that, as it is easy to see, after restricting ourselves to $Q$--spaces and translating this 
definition back to the `matrix language', we shall obtain the standard definition of completely 
bounded (and completely contractive) bioperator between operator spaces (see~\cite[p.126]{efr}). 

Here is another example of the `automatic complete boundedness'. {\it If $E, F$ are $PQ$--spaces, 
and $f:E\to\co$, $g:F\to\co$ are bounded functionals, then the bilinear functional $f\times 
g:E\times F\to{^{(\ii)}}\co:(x,y)\mt f(x)g(y)$ is completely bounded, and $\|f\times 
g\|_{cb}=\|f\times g\|=\|f\|\|g\|$.}  This can be easily deduced from Proposition 3.2 with the help 
of the formula $(f\times g)_\ii(u,v)=f_\ii(u)\di g_\ii(v), u\in\cK E, v\in\cK F$. 

As a good exercise, we can mention the situation with the inner product bilinear functional 
  $H\times H^{cc}\to{^{(\ii)}}\co:(x,y)\mt\la x,y\ra$, where $H$ is a Hilbert space. It is completely 
  contractive, if we endow both $H$ and $H^{cc}$ with the maximal $PQ$--norm (cf. Example \ref{ex1}),
and it is not completely bounded, if we endow them with the minimal $Q$--norm. 

\section{Further examples of proto-quantum spaces and related bilinear operators}
We introduce here several examples of $PQ$--spaces. Later some of them will show especially good 
behavior as tensor factors. 
\begin{example} 
Let $(X,\mu)$ be a measure space and $F$ be an arbitrary $PQ$--space. We want to endow the normed 
space $L_p(X,F);1\le p\le\ii$ of relevant $F$-valued measurable functions on $X$ with a $PQ$--norm. 

\medskip
As a preliminary step, consider the (non-completed) normed space $L_p(X,\cK F)$ 
 and note that it is a $\bb$-bimodule with the outer multiplications defined by
$$
[a\cd\bar x](t):=a\cd[\bar x(t)]\q{\rm and}\q [\bar x\cd b](t):=[\bar x(t)]\cd
b; \q a,b\in\bb,\bar x\in L_p(X,\cK F), t\in X.
$$
A routine calculation shows that this bimodule is contractive. 

Now consider the operator $\al:\cK(L_p(X,F))\to L_p(X,\cK F)$, well defined on elementary tensors 
by taking $ax$ to the $\cK F$-valued function $\bar x(t):=a(x(t))$. Introduce  the semi-norm on 
$\cK(L_p(X,F))$ by setting $\|u\|:=\|\al(u)\|$. Observe that $\al$ is a $\bb$-bimodule morphism: to 
show this, it is sufficient to consider respective elementary tensors. 

 Thus, there is an isometric morphism of the semi-normed bimodule $\cK(L_p(X,F))$ into a contractive 
 $\bb$-bimodule. It follows immediately that the former bimodule is itself contractive, hence the 
introduced semi-norm on $\cK(L_p(X,F))$ is a $PQ$--semi-norm on $L_p(X,F)$. Further, for an 
arbitrary rank 1 operator $Q\in\cK; \|Q\|=1$ and $x\in L_p(X,F)$ we have $\|Q[x(t)]\|=\|x(t)\|$ for 
all $t\in X$. Therefore for $Qx\in\cK(L_p(X,F))$ we easily have $\|Qx\|=\|x\|$. 
This means that the underlying semi-normed space of the constructed $PQ$--space is
the `classical'  $L_p(X,F)$. Consequently, Proposition 2.4 guarantees that the 
$PQ$--semi-norm on $L_p(X,F)$ is actually a norm. 
\end{example}

It is easy to verify that the $PQ$--space $L_p(X,F)$ is $p$--convex or $p$--concave provided $F$ 
has the same property. In particular, if $F$ is an ${\cal L}_p$--space, then $L_p(X,F)$ is also an 
${\cal L}_p$--space. 
Note also that the $PQ$--space $L_p(X,F)$ is not a $Q$--space whenever $p<\ii$, and $X$ is not a 
single atom. 
%
\begin{example} \label {ex8}
Now we want to introduce a $P$--quantization of the `classical' tensor product $E\ot_{pr}F$ of 
normed spaces, when one of tensor factors, say, to be definite, $F$, is a $PQ$--space. 

Consider the linear isomorphism $\beta: \cK(E\ot F)\to E\ot_{pr}(\cK F)$, well defined by taking 
$a(x\ot y)$ to $x\ot ay$, and introduce a norm on $\cK(E\ot F)$ by setting $\|U\|:=\|\beta(U)\|$. 
The space $E\ot_{pr}(\cK F)$, as a projective tensor product of a normed space and a contractive 
$\bb$-bimodule, has itself a standard structure of a contractive $\bb$-bimodule. The same, because 
$\beta$ is obviously a $\bb$-bimodule morphism, is true with $\cK(E\ot F)$. Thus $E\ot F$ becomes a 
$PQ$--space, and we must show that its underlying normed space is exactly $E\ot_{pr}F$. 

Denote the norm on $E\ot_{pr} F$ and on $E\ot_{pr}(\cK F)$ by $\|\cd\|_{pr}$, and the introduced 
$PQ$--norm, as well as the norm of the respective underlying space, just by $\|\cd\|$. 

Take an arbitrary $u\in E\ot F$. It is easy to check that the norm on the underlying space in 
question is a cross-norm, hence $\|u\|\le\|u\|_{pr}$. Therefore our task is to show that for 
 a rank 1 projection $P\in\cK$ we have $\|Pu\|\ge\|u\|_{pr}$.  
 
Take an arbitrary representation of $\beta(Pu)$ as $\sum_{k=1}^nx_k\ot w_k; x_k\in E,w_k\in\cK F$. 
Obviously, $P\cd w_k\cd P=P y_k$ for some $y_k\in F; k=1,...,n$. Therefore 
$\sum_{k=1}^n\|x_k\|\|w_k\|\ge \sum_{k=1}^n\|x_k\|\|P\cd w_k\cd P\|=\sum_{k=1}^n\|x_k\|\|y_k\|$. 

But we have $\beta(Pu)=P\cd\beta(Pu)\cd P=\sum_{k=1}^nx_k\ot P\cd w_k\cd 
P=\beta(P[\sum_{k=1}^nx_k\ot y_k])$. It follows that $u=\sum_{k=1}^nx_k\ot y_k$ and consequently, 
$\sum_{k=1}^n\|x_k\|\|w_k\|\ge\|u\|_{pr}$. From this, by the definition of the norm on 
$E\ot_{pr}(\cK F)$, we have $\|Pu\|=\|\beta(Pu)\|\ge\|u\|_{pr}$. 
%
\end{example}

The introduced $PQ$--spaces participate in  some bioperators that we shall essentially use. Their 
study needs a certain extended version of the operation ` $\di$ '. Namely, if $E$ is a linear 
space, $a\in\cK$ and $u\in\cK E$, then we introduce in $\cK E$ the elements, denoted by  $a\di u$ 
and $u\di a$. They are well defined, if we 
set $a\di(\sum_kb_kx_k):=\sum_k(a\di b_k)x$ and $(\sum_kb_kx_k)\di a:=\sum_k(b_k\di a)x_k$.
We shall use the following properties of such an operation that may have an independent interest. 

As a preparatory step, for a given $e\in L;\|e\|=1$ we introduce  the operator $S$ on $L$, acting 
as $\zeta\mt e\di\zeta$; it is, of course, an isometry. It is easy to verify that for all $b\in\cK$ 
and $P:=e\circ e$ we have 
\begin{align}
b=S^*(P\di b)S \q {\rm and} \q P\di b=SbS^*. 
\end{align}

\begin{proposition} \label{pr7}
 Let $E$ be a $PQ$--space, $u\in\cK E$. Then

(i) for every $a\in\cK$ we have $\|a\di u\|=\|u\di a\|$ 

(ii) for every $Q\in\cK$ of rank 1 we have $\|Q\di u\|=\|Q\|\|u\|$.

(iii) for an arbitrary $a\in\cK$ we have $\|a\di u\|\le\|a\|_p\|u\|$ provided  $E$ is $p$--convex, 
$\|a\di u\|\ge\|a\|_p\|u\|$ provided  $E$ is $p$--concave and, as a corollary, $\|a\di u\| 
=\|a\|_p\|u\|$ provided $E\in{\cal L}_p$. In particular, for all $E$ we have $\|a\di 
u\|\le\|a\|_1\|u\|$ 
\end{proposition}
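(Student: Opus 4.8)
The plan is to reduce everything to the previously established facts about the plain outer action of $\cK$ on $\cK E$ (Proposition 2.6) and about unitary equivalence $\approx$, using the two identities in (4.2) that relate $b$ and $P\di b$ through the isometry $S$ attached to a fixed unit vector $e$ and the rank one projection $P=e\circ e$. For part (ii), take $Q$ of rank $1$; after scaling we may assume $\|Q\|=1$, and then $Q\approx P$ for $P=e\circ e$, so $Q\di u\approx P\di u$ (conjugating by the unitaries intertwining $Q$ and $P$, which commute appropriately with $\di$); hence it suffices to compute $\|P\di u\|$. Writing $u=\sum_k b_k x_k$, the relation $P\di b = SbS^*$ from (4.2) gives $P\di u = S\cd u\cd S^*$ (extending the identity from elementary tensors by linearity), and since $S$ is an isometry and $\cK E$ is a contractive $\bb$-bimodule we get $\|P\di u\|\le\|u\|$; conversely $u = S^*\cd(P\di u)\cd S$ by the first identity in (4.2), giving the reverse inequality. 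So $\|P\di u\|=\|u\|$, which is $\|Q\|\|u\|$ after unscaling.

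For part (iii), fix $a\in\cK$ and choose, as in (2.1), unitaries with $a\approx h$, $h=\sum_{j=1}^n s_j P_j$ with the $P_j$ pairwise orthogonal rank one projections and $s_j\ge0$. By part (i) (once proved) or by directly conjugating, $\|a\di u\| = \|h\di u\|$. Now $h\di u = \sum_j s_j (P_j\di u)$, and the key point is that the elements $P_j\di u$ have pairwise orthogonal supports: indeed if $P_j = e_j\circ e_j$ then $P_j\di b$ is supported on the range of $\iota(e_j\ot L)$, and these ranges are mutually orthogonal for distinct $j$ because the $e_j$ are. Granting this, $p$-convexity of $E$ gives $\|h\di u\| = \|\sum_j s_j(P_j\di u)\| \le (\sum_j s_j^p\,\|P_j\di u\|^p)^{1/p}$, and by part (ii) each $\|P_j\di u\| = \|u\|$, so the right side is $(\sum_j s_j^p)^{1/p}\|u\| = \|h\|_p\|u\| = \|a\|_p\|u\|$. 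The $p$-concave case is the same inequality reversed, the ${\cal L}_p$ case is the combination, and the universal bound $\|a\di u\|\le\|a\|_1\|u\|$ follows since every $PQ$-space is $1$-convex.

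For part (i), the cleanest route is to deduce it from (ii) and (iii) is not available in general, so instead argue by symmetry of the construction: for any unitary $V$ one has $V\di u \approx u$ in a suitable sense, and more directly, conjugating by the unitary flip $\tau$ on $L\hil L$ (transported to $L$ via $\iota$) interchanges $a\di b$ with $b\di a$ for operators and hence, extended, interchanges $a\di u$ with $u\di a$ up to the unitary $\iota\tau\iota^{-1}$ acting through the bimodule structure; since that action is isometric, $\|a\di u\| = \|u\di a\|$. I would prove (i) first if the flip argument is clean, or else prove (ii) and (iii) for the left action, then repeat verbatim for the right action and read off (i).

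The main obstacle I anticipate is the orthogonality-of-supports claim in part (iii): one must verify carefully that $P_j\di u$ is genuinely supported (in the sense $R\cd(P_j\di u)\cd R = P_j\di u$) on a projection $R_j=\iota(e_j\circ e_j \hil \id)\iota^{-1}$, and that $\sum_j R_j$ are pairwise orthogonal, so that the $p$-convexity inequality applies to the sum $\sum_j s_j(P_j\di u)$. This is a bookkeeping check using $(a\di b)(c\di d) = ac\di bd$ from (4.1) together with $P_j c P_j$ being proportional to $P_j$, but it is the only place where something could go wrong; everything else is a direct transcription of Proposition 2.6's proof with $\di$ in place of the plain action.
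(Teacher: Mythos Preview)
Your proposal is correct and follows essentially the same route as the paper: the flip unitary $\iota\tau\iota^{-1}$ for (i), the isometry $S$ from the identities $P\di b=SbS^*$ and $b=S^*(P\di b)S$ for (ii), and the decomposition $a\approx h=\sum_j s_jP_j$ together with the pairwise orthogonal supports $P_j\di\id$ for (iii). The only cosmetic difference is in (ii): you first reduce a general rank-one $Q$ to the fixed $P=e\circ e$ via $Q\approx P$ (using unitaries $S',T'$ so that $(S'\di\id)\cd(Q\di u)\cd(T'\di\id)=P\di u$), whereas the paper handles $Q=\xi\circ\eta$ directly by inserting the norm-one operators $R_1=\xi\circ e$, $R_2=e\circ\eta$ into the same formula; your reduction is arguably cleaner. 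The orthogonality-of-supports check you flag as the main obstacle is exactly what the paper asserts without further comment, and your identification of the supports as $P_j\di\id$ is the right one.
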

\begin{proof}
(i) Consider the unitary operator $\bigtriangleup$ on $L$, well defined by taking 
$\xi\di\eta;\xi,\eta\in L$ to $\eta\di\xi$. Obviously, for every $a,b\in\cK$ we have $b\di 
a=\bigtriangleup(a\di b)\bigtriangleup$. From this we easily deduce that for every $a\in\cK $ we 
have $\bigtriangleup\cd(a\di u)\cd\bigtriangleup=u\di a$. It remains to recall that the 
$\bb$-bimodule $\cK E$ is contractive. 

\medskip
(ii) We can assume that $\|Q\|=1$. Then $Q=\xi\circ\eta$ for some $\xi,\eta\in L; 
\|\xi\|=\|\eta\|=1$. Then, for $e$ and $P$ as above,
the formulae (4.1), being combined with the equalities $Q=R_1PR_2$ and $P=R_1^*QR_2^*$, where 
$R_1:=\xi\circ e$ and $R_2:=e\circ\eta$, imply that 
$$
Q\di b=(R_1\di\id)SbS^*(R_2\di\id)\q {\rm and}\q b=S^*(R_1^*\di\id)(q\di b)(R_2^*\di\id)S.
 $$
 Therefore, representing $u$ as a sum of elementary tensors, we obtain that
$$
Q\di u=[(R_1\di\id)S]\cd u\cd[S^*(R_2\di\id)] \q\q {\rm and} \q\q
u=[S^*(R_1^*\di\id)]\cd(q\di u)\cd[(R_2^*\di\id)S].
$$
But  all operators, participating in these equalities, have norm 1, and the bimodule $\cK E$ is 
contractive. Consequently, we have the estimate $\|Q\di u\|\le\|u\|$ and its inverse.

\medskip
(iii) By (2.1), for our $a$ there exist $h, P_k$ and $s_k$ with mentioned properties. If $S,T$ are 
relevant operators, we have $a\di u=(S\di\id)\cd(h\di u)\cd(T\di\id)$, hence $\|a\di u\|=\|h\di 
u\|=\|\sum_ks_kP_k\di u\|$. Further, the elements $P_k\di u$ have pairwise orthogonal supports, 
namely $P_k\di\id$. Combining this with (ii) and remembering, what is $\|h\|_p$, we have, in 
`convex' case, that $\|a\di u\|\le (\sum_{k=1}^n(s_k\|u\|)^p)^{\frac{1}{p}}=\|a\|_p\|u\|$. 
Similarly, in the `concave' case, we obtain the inverse estimate. 
\end{proof}

 Here are several applications. 
In the following proposition $p\in[1,\ii]$, and we consider $L_p(X,F)$, where $F$ is a given 
$PQ$--space, and also $L_p(X,{^{(p)}}\co)$ as $PQ$--spaces according to  Example 4.1.

\begin{proposition}
Let $F$ be $p$--convex. Then the bioperator $\rr:L_p(X,{^{(p)}}\co)\times F\to L_p(X,F)$, taking a 
pair  $(z,y)$ to the $F$-valued function $t\mt z(t)y; t\in X$, is completely contractive. 
\end{proposition}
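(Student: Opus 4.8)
The plan is to take arbitrary $w\in\cK(L_p(X,{}^{(p)}\co))$ and $v\in\cK F$, and to estimate the norm of $\rr_\ii(w,v)$ in $\cK(L_p(X,F))$ directly from the definition of the $PQ$--norm on $L_p(X,F)$ given in Example 4.1, namely via the isometric operator $\al:\cK(L_p(X,F))\to L_p(X,\cK F)$. So the first step is to write $w$ and $v$ as finite sums of elementary tensors and compute $\al(\rr_\ii(w,v))$ as an explicit $\cK F$-valued function on $X$; since the amplification of a bioperator is given on elementary tensors by $\rr_\ii(ax,by)=(a\di b)\rr(x,y)$, and $\rr(z,y)$ is the function $t\mapsto z(t)y$, this function will essentially be $t\mapsto (a\di b)\,(z(t)\,y)$ summed over the representing tensors, i.e. of the form $t\mapsto c\di\big(w(t)\,\cd\,v\big)$-type expressions where $w(t)$ is the value in $\cK$ of the $L_p(X,\cK)$-function $\al$ attaches to $w$.

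The second step is the pointwise estimate. For fixed $t$ the relevant element of $\cK F$ is built from an element of $\cK$ (coming from the $L_p(X,{}^{(p)}\co)$-factor, whose amplification norm is the $\|\cd\|_p$-Schatten norm by Example 2.6) combined via ` $\di$ ' with the element of $\cK F$ coming from $v$. Here I invoke Proposition \ref{pr7}(iii): since $F$ is $p$--convex, $\|a\di v\|\le\|a\|_p\|v\|$ for $a\in\cK$, $v\in\cK F$. Applying this at (almost) every $t\in X$ gives a pointwise bound of the form $\|(\al\rr_\ii(w,v))(t)\|\le \|(\text{$\cK$-valued function of }t)\|_p\;\|v\|$, where the scalar function $t\mapsto\|\cdot\|_p$ is exactly (up to the identification $\cK(L_p(X,{}^{(p)}\co))\simeq L_p(X,\cK_p)$ of Examples 4.1 and 2.6) the function whose $L_p(X)$-norm is $\|w\|$. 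The third step is then to take $L_p(X,\cdot)$-norms of both sides: the right-hand side becomes $\big(\int_X \|(\text{$\cK$-function})(t)\|_p^p\,d\mu\big)^{1/p}\|v\|=\|w\|\,\|v\|$, which is precisely the assertion $\|\rr_\ii(w,v)\|\le\|w\|\,\|v\|$, i.e. complete contractivity. For $p=\ii$ the integral is replaced by an essential supremum throughout, and the same argument goes through verbatim.

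I expect the main obstacle to be bookkeeping rather than conceptual: carefully unwinding the three identifications that are stacked here — the isomorphism $\al$ defining the $PQ$--norm on $L_p(X,F)$, the identification of $\cK(L_p(X,{}^{(p)}\co))$ with $L_p(X,\cK_p)$ (which requires combining Example 4.1 with the fact from Example 2.6 that the amplification of ${}^{(p)}\co$ is $\cK_p$), and the formula for $\rr_\ii$ on elementary tensors — and checking that they are compatible so that $\al(\rr_\ii(w,v))(t)$ really equals the $\di$-product of the two pointwise pieces. One has to be slightly careful that the ` $\di$ ' appearing in $\rr_\ii$ is the genuine operator operation on $\cK$, while the ` $\di$ ' to which Proposition \ref{pr7}(iii) applies is the extended $\cK\times\cK E\to\cK E$ operation; but these agree on elementary tensors by the very definition of the extended operation, so passing through a representation of $v$ as a sum of elementary tensors resolves the point. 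A minor subtlety is measurability of the pointwise-norm function $t\mapsto\|(\al\rr_\ii(w,v))(t)\|_{\cK F}$, but since everything is a finite sum of elementary tensors with $\cK$-valued coefficients taken from $L_p(X)$-functions, this is automatic. Once these identifications are pinned down, the estimate is a one-line application of Proposition \ref{pr7}(iii) followed by the triangle/Minkowski inequality in $L_p(X)$.
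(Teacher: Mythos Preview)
Your proposal is correct and follows essentially the same approach as the paper: pass through the isometry $\al$ (and its special case $\al_0$ for ${}^{(p)}\co$, which is your identification $\cK(L_p(X,{}^{(p)}\co))\simeq L_p(X,\cK_p)$), recognise that $\al(\rr_\ii(w,v))(t)=\al_0(w)(t)\di v$, apply Proposition~\ref{pr7}(iii) pointwise, and integrate. The paper packages the ``compatibility'' check you mention as the commutativity of a square with an auxiliary bioperator ${\cal S}:L_p(X,\cK_p)\times\cK F\to L_p(X,\cK F)$, $(\om,v)\mapsto(t\mapsto\om(t)\di v)$, verified on elementary tensors; this is exactly the bookkeeping you anticipate.
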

\begin{proof}
 Recall the isometric operator $\al:\cK(L_p(X,F))\to L_p(X,\cK F)$ and
distinguish its particular case $\al_0:\cK(L_p(X),{^{(p)}}\co)\to L_p(X,\cK_p)$. Also introduce the 
bioperator ${\cal S}:L_p(X,\cK_p)\times\cK F\to L_p(X,\cK F)$, taking a pair $(\om,v)$ to the $\cK 
F$-valued function $t\mt\om(t)\di v; t\in X$. Consider the diagram 
\[
 \xymatrix @C+20pt{\cK(L_p(X,{^{(p)}}\co))\times\cK F \ar[r]^{\rr_\ii}\ar[d]_{\al_0\times\id_{\cK F}} 
& \cK(L_p(X,F)) \ar[d]^{\al} \\
L_p(X,\cK_p)\times\cK F \ar[r]^{{\cal S}} & L_p(X,\cK F) }, 
\]
 \noindent It is commutative: this is easy to check on elementary tensors in the respective 
amplifications. Therefore, for $w\in\cK(L_p(X,{^{(p)}}\co))$ and $v\in\cK E$ we have 
\begin{align}
\|\rr_\ii(w,v)\|=\|\al(\rr_\ii(w,v))\|=\|{\cal S}(\al_0(w),v)\|.
\end{align}
But it follows from Proposition 4.3(iii) that for all $\om\in L_p(X,\cK_p), v\in\cK F$ we have
\[
\|{\cal S}(\om,v)\|=\left(\int_X(\|\om(t)\di v\|^pdt\right)^{\frac{1}{p}}\le
\left(\int_X(\|\om(t)\|_p\|v\|)^pdt\right)^{\frac{1}{p}}=\|\om\|\|v\|.
\]
Setting in (4.2) $\om:=\al_0(w)$ and remembering that $\al_0$ is an isometry, we obtain that 
$\|\rr_\ii(w,v)\|\le\|w\|\|v\|$. 
\end{proof}

In the following proposition $E$ is a normed space, ${^{(p)}}E$ is its $P$--quantization from 
Example \ref{ex1}, $F$ and $E\ot_{pr}F$ are $PQ$--spaces from Example 4.2. 

\begin{proposition}  \label{pr}
Let $F$ be $p$--convex. Then the canonical bioperator $\vartheta:{^{(p)}}E\times F\to 
E\ot_{pr}F:(x,y)\mt x\ot y$, is completely contractive. In particular, the bioperator  
$\rr:{^{(p)}}\co\times F\to F:(\lm,x)\mt\lm x$ is completely contractive. 
\end{proposition}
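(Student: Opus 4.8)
The plan is to reduce the statement about $\vartheta:{}^{(p)}E\times F\to E\ot_{pr}F$ to the already-established Proposition 4.3(iii) by an explicit description of the amplification $\vartheta_\ii$ in terms of the isomorphisms $\beta$ and the operation `$\di$'. Recall from Example \ref{ex1} that the amplification of ${}^{(p)}E$ is $\cK E$ normed by the norm of $\cK_p\ot_{pr}E$, and from Example \ref{ex8} that the amplification of $E\ot_{pr}F$ is $\cK(E\ot F)$ normed via the isomorphism $\beta:\cK(E\ot F)\to E\ot_{pr}(\cK F)$, $a(x\ot y)\mt x\ot ay$. So I would fix $w\in\cK E$ and $v\in\cK F$, and aim to show $\|\vartheta_\ii(w,v)\|\le\|w\|\|v\|$, where the norm on the left is the one pulled back through $\beta$.

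First I would compute $\vartheta_\ii$ on elementary tensors: $\vartheta_\ii(ax,by)=(a\di b)\,\vartheta(x,y)=(a\di b)(x\ot y)$, so that $\beta(\vartheta_\ii(ax,by))=x\ot(a\di b)y$. Writing $w=\sum_i a_i x_i$ and expanding, one gets $\beta(\vartheta_\ii(w,v))=\sum_i x_i\ot(a_i\di v)$, which I recognize as the image of $w\ot v\in(\cK_p\ot_{pr}E)\ot_{pr}(\cK F)$ under the natural bioperator $(\cK_p\ot_{pr}E)\times\cK F\to E\ot_{pr}(\cK F)$ sending $(ax,v)$ to $x\ot(a\di v)$; call its amplification-free shadow $\Psi$. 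The clean way to organize this is a commuting-square diagram, exactly in the style of Proposition 4.5: the top row is $\vartheta_\ii$ from $\cK({}^{(p)}E)\times\cK F$ to $\cK(E\ot F)$, the left vertical is (identity on $\cK E$ viewed as $\cK_p\ot_{pr}E$) $\times\,\id_{\cK F}$, the right vertical is $\beta$, and the bottom row is $\Psi:(\cK_p\ot_{pr}E)\times\cK F\to E\ot_{pr}(\cK F)$, $\,(u,v)\mt$ ``$u\di v$ rewritten via $\beta^{-1}$-style swap''. Commutativity is the routine check on elementary tensors.

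So it suffices to estimate $\|\Psi(u,v)\|_{pr}$ for $u\in\cK_p\ot_{pr}E$, $v\in\cK F$. Take any representation $u=\sum_{k}a_k\ot x_k$ with $\sum_k\|a_k\|_p\|x_k\|$ close to $\|u\|_{pr}$; then $\Psi(u,v)=\sum_k x_k\ot(a_k\di v)$, hence by the triangle inequality in $E\ot_{pr}(\cK F)$, $\|\Psi(u,v)\|_{pr}\le\sum_k\|x_k\|\,\|a_k\di v\|$. Now invoke Proposition 4.3(iii): since $F$ is $p$--convex, $\|a_k\di v\|\le\|a_k\|_p\|v\|$. Therefore $\|\Psi(u,v)\|_{pr}\le\bigl(\sum_k\|x_k\|\,\|a_k\|_p\bigr)\|v\|$, and taking the infimum over representations gives $\|\Psi(u,v)\|_{pr}\le\|u\|_{pr}\|v\|$. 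Feeding $u=w$ (with its ${}^{(p)}E$-amplification norm, which by definition is exactly the $\cK_p\ot_{pr}E$ norm) through the commuting square yields $\|\vartheta_\ii(w,v)\|\le\|w\|\|v\|$, i.e. $\vartheta$ is completely contractive. The special case $E=\co$, $\vartheta=\rr:{}^{(p)}\co\times F\to F$ follows since $\co\ot_{pr}F=F$ isometrically and as $PQ$--spaces.

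The only mildly delicate point — and the one I would be most careful about — is the bookkeeping identifying the $PQ$-norm on $E\ot_{pr}F$ (defined through $\beta$) with the projective tensor norm on $E\ot_{pr}(\cK F)$ after applying $\beta$, together with making sure the bottom map $\Psi$ is genuinely well-defined and $\bb$-bimodule compatible so that the square really commutes on amplifications; everything else is the triangle inequality plus the quoted Proposition 4.3(iii). There is no serious analytic obstacle.
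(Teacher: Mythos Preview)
Your proposal is correct and follows essentially the same route as the paper: the paper also sets up the commuting square with $\beta$ on the right, the flip $\beta_0:\cK({}^{(p)}E)\to E\ot_{pr}\cK_p$ on the left, and the bioperator ${\cal S}:(E\ot_{pr}\cK_p)\times\cK F\to E\ot_{pr}\cK F$, $(x\ot a,v)\mt x\ot(a\di v)$ along the bottom, then invokes Proposition~4.3(iii) to see that ${\cal S}$ is contractive. The only cosmetic difference is that the paper phrases the contractivity of ${\cal S}$ via the associated trilinear operator ${\cal T}:(x,a,v)\mt x\ot(a\di v)$ and the universal property of $\ot_{pr}$, whereas you unpack this by choosing a near-optimal representation and applying the triangle inequality; these are equivalent.
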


 \begin{proof}
Consider the trilinear operator ${\cal T}:E\times \cK\times \cK F\to E\ot(\cK F):(x,a,v)\mt 
x\ot(a\di v)$. It gives rise to the bioperator  ${\cal S}:(E\ot\cK)\times\cK F\to E\ot\cK F:(x\ot 
a,v)\mt x\ot(a\di v)$. Being considered with the domain $E\times \cK_p\times \cK F$ and the range 
$E\ot_{pr}(\cK F)$, ${\cal T}$ is contractive by virtue of Proposition 4.3(iii); therefore ${\cal 
S}$ is contractive, taken with the domain $(E\ot_p\cK)\times\cK F$ and the same range. Now recall  
the isometric operator $\beta:\cK(E\ot_{pr}F)\to E\ot_{pr}\cK F$ and distinguish its particular 
case, the ``flip'' $\beta_0:\cK ({^{(p)}}E)\to E\ot_{pr}\cK_p$. Consider the diagram 
\[
\xymatrix@C+20pt{\cK({^{(p)}}E)\times\cK F \ar[r]^{\vartheta_\ii}\ar[d]_{\beta_0\times\id_{\cK F}}
& \cK(E\ot_{pr} F) \ar[d]^{\beta} \\
(E\ot_{pr}\cK_p)\times\cK F \ar[r]^{{\cal S}} & E\ot_{pr}\cK F },
\]
\noindent which is obviously commutative. Therefore for $w\in \cK({^{(p)}}E)$ and $v\in\cK F$ we 
have  
%
\[
\|\vartheta_\ii(w,v)\|=\|\beta(\vartheta_\ii(w,v))\|=\|{\cal S}(\beta_0(w),v\|
\le\|\beta_0(w)\|\|v\|=\|w\|\|v\|. 
\]
\end{proof}

Our third example of a completely contractive bioperator needs some preparatory observation which 
must be well known in its equivalent version for the `genuine' Hilbert tensor product of operators. 

\begin{proposition}
For $a,b\in\cK_p$ we have $\|a\di b\|_p=\|a\|_p\|b\|_p$. 
\end{proposition}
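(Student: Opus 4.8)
The plan is to reduce the statement to the well-known multiplicativity of Schatten norms under the genuine Hilbert tensor product of operators, using the fixed unitary $\iota:L\hil L\to L$ that defines the operation $\di$. Recall that by construction $a\di b=\iota(a\hil b)\iota^{-1}$, and since $\iota$ is unitary, conjugation by $\iota$ is a $*$-isomorphism of $\bb(L\hil L)$ onto $\bb(L)=\bb$ which carries $\ck$ onto $\ck$ and preserves each Schatten norm $\|\cd\|_p$ (a $*$-isomorphism implemented by a unitary preserves singular values). Hence $\|a\di b\|_p=\|a\hil b\|_p$, where on the right we mean the $p$-Schatten norm of the operator $a\hil b$ on the Hilbert space $L\hil L$, and the problem becomes: for finite-rank $a,b$, show $\|a\hil b\|_p=\|a\|_p\|b\|_p$.

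For the latter I would diagonalize. Using (2.1), or directly the singular value decomposition, write $a=\sum_{i} s_i\,(\xi_i\circ\xi_i')$ and $b=\sum_j t_j\,(\eta_j\circ\eta_j')$ with $s_i,t_j\ge0$ and $\{\xi_i\},\{\xi_i'\},\{\eta_j\},\{\eta_j'\}$ orthonormal systems in $L$. Then, using the identity $(a\di b)(\xi\di\eta)=a(\xi)\di b(\eta)$ together with the fact that $\{\xi_i\di\eta_j\}$ and $\{\xi_i'\di\eta_j'\}$ are again orthonormal systems in $L$ (this is immediate from $\|\xi\di\eta\|=\|\xi\|\|\eta\|$ and bilinearity of $\di$, which give orthonormality of tensor products of orthonormal systems), one computes that $a\di b=\sum_{i,j} s_i t_j\,\big((\xi_i\di\eta_j)\circ(\xi_i'\di\eta_j')\big)$. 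This is precisely a singular value decomposition of $a\di b$, so its singular values are exactly the numbers $\{s_i t_j\}$. Therefore
\[
\|a\di b\|_p=\Big(\sum_{i,j}(s_it_j)^p\Big)^{1/p}=\Big(\sum_i s_i^p\Big)^{1/p}\Big(\sum_j t_j^p\Big)^{1/p}=\|a\|_p\|b\|_p,
\]
with the usual reading of the $p=\ii$ case as a maximum (consistent with the convention fixed in Section 2), in which case the identity reads $\max_{i,j}s_it_j=(\max_i s_i)(\max_j t_j)$.

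The only genuinely delicate point is the bookkeeping that $a\di b=\sum_{i,j}s_it_j\big((\xi_i\di\eta_j)\circ(\xi_i'\di\eta_j')\big)$ and that this is an honest singular value decomposition; once the orthonormality of $\{\xi_i\di\eta_j\}$ is in place this is a routine expansion, evaluating both sides on the basis vectors $\xi_k'\di\eta_\ell'$ and on their orthogonal complement. I expect no real obstacle here — the main work is just recording the transfer of $\|\cd\|_p$ through the unitary $\iota$ and the elementary observation that $\di$ sends orthonormal pairs to orthonormal pairs. An even shorter route, if one prefers to cite rather than compute, is simply to invoke the classical fact that the Schatten $p$-norm is multiplicative under the Hilbert tensor product of operators and to note that $\di$ is, by definition, a unitarily conjugated copy of that tensor product.
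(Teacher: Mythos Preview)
Your proof is correct and follows essentially the same route as the paper: both arguments reduce to the singular value decomposition, observe that the singular values of $a\di b$ are precisely the products $s_it_j$ of the singular values of $a$ and $b$ (the paper phrases this via the unitary equivalence $a\approx\sum_k s_kP_k$, $b\approx\sum_l t_lQ_l$ from (2.1) and the fact that $P_k\di Q_l$ are pairwise orthogonal rank~1 projections), and then compute the $p$-norm. Your initial detour through $\|a\hil b\|_p$ on $L\hil L$ is not needed, since you end up working with $\di$ directly anyway, but it does no harm.
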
 

\begin{proof} Take unitary operators $S,T,S',T'\in\bb$ such that $SaT=\sum_{k=1}^ns_kP_k$ and 
$S'bT'=\sum_{l=1}^mt_lQ_k$, where $P_k; k=1,...,n$, as well as $Q_l; l=1,...,m$, is a family of 
pairwise orthogonal rank 1 projections. Then $\|a\|_p=(\sum_{k=1}^ns_k^p)^{\frac{1}{p}}$ and 
$\|b\|_p=(\sum_{l=1}^mt_l^p)^{\frac{1}{p}}$. Further, $(S\di S')(a\di b)(T\di 
T')=(\sum_{k=1}^ns_kP_k)\di(\sum_{l=1}^mt_lQ_k)$. Since $S\di S',T\di T'$ are 
unitary operators, 
this implies that 
\[
\|a\di 
b\|_p=\|[(\sum_{k=1}^ns_kP_k)\di(\sum_{l=1}^mt_lQ_k)]\|_p=\|\sum_{k,l}s_kt_lP_k\di Q_l\|_p.
\]
 But, 
since all $P_k\di Q_l$ are pairwise orthogonal rank 1 projections, the  last number is 
$(\sum_{k,l}(s_kt_l)^p)^{\frac{1}{p}}= 
(\sum_{k=1}^ns_k^p)^{\frac{1}{p}}(\sum_{l=1}^mt_l^p)^{\frac{1}{p}}=\|a\|_p\|b\|_p$. 
\end{proof}
\begin{proposition}
For every $p,q\in[1,\ii]$ and $r:=\max\{p,q\}$ the bioperator
$\rr:{^{(p)}}E\times{^{(q)}}F\to{^{(r)}}(E\ot_{pr}F):(x,y)\mt x\ot y$ is completely contractive. 
\end{proposition}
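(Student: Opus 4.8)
The statement to prove is that for $r=\max\{p,q\}$ the canonical bioperator $\rr:{^{(p)}}E\times{^{(q)}}F\to{^{(r)}}(E\ot_{pr}F)$ is completely contractive. The natural approach is to mimic the proof of Proposition \ref{pr} (the $\vartheta$-case), but now tracking both amplifications through the ``flip'' identifications, and to reduce the whole estimate to the submultiplicativity of $\di$ on the Schatten classes, which is exactly the content of the preceding proposition ($\|a\di b\|_p=\|a\|_p\|b\|_p$). Since the amplification of ${^{(p)}}E$ is $\cK_p\ot_{pr}E$ and similarly for the others, the plan is to write everything in terms of the flip isomorphisms $\cK(^{(p)}E)\cong E\ot_{pr}\cK_p$, $\cK(^{(q)}F)\cong F\ot_{pr}\cK_q$, and $\cK(^{(r)}(E\ot_{pr}F))\cong(E\ot_{pr}F)\ot_{pr}\cK_r$.

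\textbf{Key steps.} First I would set up the three ``flip'' isometric isomorphisms; call them $\beta_E:\cK(^{(p)}E)\to E\ot_{pr}\cK_p$, $\beta_F:\cK(^{(q)}F)\to F\ot_{pr}\cK_q$, and $\beta_{EF}:\cK(^{(r)}(E\ot_{pr}F))\to(E\ot_{pr}F)\ot_{pr}\cK_r$, each being the obvious rearrangement of elementary tensors, just as $\beta_0$ in the proof of Proposition \ref{pr}. Next I would introduce the comparison bioperator on the ``flipped'' side: the trilinear-then-bilinear map ${\cal S}:(E\ot_{pr}\cK_p)\times(F\ot_{pr}\cK_q)\to(E\ot_{pr}F)\ot_{pr}\cK_r$ sending $(x\ot a,\,y\ot b)$ to $(x\ot y)\ot(a\di b)$, and check the commuting square
\[
\xymatrix@C+20pt{\cK(^{(p)}E)\times\cK(^{(q)}F)\ar[r]^{\rr_\ii}\ar[d]_{\beta_E\times\beta_F} & \cK(^{(r)}(E\ot_{pr}F))\ar[d]^{\beta_{EF}}\\
(E\ot_{pr}\cK_p)\times(F\ot_{pr}\cK_q)\ar[r]^{{\cal S}} & (E\ot_{pr}F)\ot_{pr}\cK_r},
\]
which (as in the earlier proofs) is routine to verify on elementary tensors in the two amplifications. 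Because all three $\beta$'s are isometries, complete contractivity of $\rr$ is equivalent to $\|{\cal S}\|\le1$.

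\textbf{The estimate for ${\cal S}$.} This is where the real content sits. I would argue that ${\cal S}$ factors as the projective tensor product of two maps: the canonical contraction $(E\ot_{pr}\cK_p)\ot_{pr}(F\ot_{pr}\cK_q)\to (E\ot_{pr}F)\ot_{pr}(\cK_p\ot_{pr}\cK_q)$ (rearrangement of factors, an isometry in fact), composed with $\id_{E\ot_{pr}F}\ot_{pr}\,m$, where $m:\cK_p\ot_{pr}\cK_q\to\cK_r$ is the bilinear map $(a,b)\mt a\di b$. By the preceding proposition, $\|a\di b\|_r=\|a\|_r\|b\|_r\le\|a\|_p\|b\|_q$ — here I use that $r=\max\{p,q\}\ge p$ and $\ge q$, so $\|\cd\|_r\le\|\cd\|_p$ and $\|\cd\|_r\le\|\cd\|_q$ on $\cK$ (the Schatten norms decrease in the index) — hence $m$ is a contraction from $\cK_p\ot_{pr}\cK_q$ to $\cK_r$. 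Projective tensoring with $\id$ preserves contractivity, so the composite, i.e. ${\cal S}$, is a contraction, finishing the proof.

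\textbf{Main obstacle.} The only genuinely delicate point is the inequality $\|a\di b\|_r\le\|a\|_p\|b\|_q$: the cited proposition gives the \emph{equality} $\|a\di b\|_p=\|a\|_p\|b\|_p$ for a single index $p$, and I must combine it with the monotonicity $\|\cd\|_r\le\|\cd\|_s$ for $r\ge s$ to bridge the mismatched indices. This monotonicity of Schatten norms is standard, but it should be invoked explicitly. Everything else — the flip isomorphisms, the commuting square, the behavior of $\ot_{pr}$ under tensoring of contractions — is entirely parallel to Examples \ref{ex8}, 4.1 and the proofs of the two preceding propositions, and requires only bookkeeping on elementary tensors.
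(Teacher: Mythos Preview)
Your proposal is correct and is essentially the paper's argument repackaged diagrammatically: both reduce to the single estimate $\|a\di b\|_r\le\|a\|_p\|b\|_q$, obtained from Proposition~4.6 together with the monotonicity of Schatten norms. The paper simply dispenses with the flips and the factorization of ${\cal S}$, taking near-optimal representations $u=\sum_k a_kx_k$, $v=\sum_l b_ly_l$ in $\cK_p\ot_{pr}E$ and $\cK_q\ot_{pr}F$ and directly estimating $\|\rr_\ii(u,v)\|\le\sum_{k,l}\|a_k\di b_l\|_r\|x_k\|\|y_l\|$ by that same inequality.
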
 

\begin{proof}
Take $u\in\cK({^{(p)}}E), v\in\cK({^{(q)}}F)$ and choose $\e>0$. By definition of `$\ot_{pr}$', 
there exist representations of $u$ as $\sum_{k=1}^na_kx_k$ and $v$ as $\sum_{l=1}^mb_ly_l$   such 
that $\sum_{k=1}^n\|a_k\|_p\|x_k\|<\|u\|_{pr}+\e$ and $\sum_{l=1}^m\|b_l\|_q\|y_l\|<\|v\|_{pr}+\e$. 
We have $\rr_\ii(u,v)=\sum_{k,l}a_k\di b_l(x_k\ot y_l)$; therefore, by the previous proposition,  
\[
\|\rr(u,v)\|\le\sum_{k,l}\|a_k\di 
b_l\|_r\|x_k\|\|y_l\|\le\left(\sum_{k=1}^n\|a_k\|_r\|x_k\|\right)
\left(\sum_{l=1}^m\|b_l\|_r\|y_l\|\right)\le
\]
\[ 
\left(\sum_{k=1}^n\|a_k\|_p\|x_k\|\right)
\left(\sum_{l=1}^m\|b_l\|_q\|y_l\|\right)< (\|u\|_{pr}+\e)(\|v\|_{pr}+\e).
\] 
Since $\e$ is arbitrary, this implies  that $\|\rr_\ii(u,v)\|\le\|u\|\|v\|$. 
\end{proof} 

\section{The projective tensor product ` ${\ot_{pop}}$ ', its definition and the existence theorem} 
A widespread point of view, inherited from pure algebra, is that the raison d'etre of a `good' 
tensor product is that it linearizes some respective `good' class of bioperators (cf.\cite[pp. 
3-5]{bou}). As to the theory of $PG$-- ( = matricially normed) spaces, one could show that the 
Haagerup tensor product, introduced in~\cite{er1}, linearizes what was called in~\cite{efr} 
multiplicatively bounded bioperators. But this is outside the scope of the present paper. Here we 
shall introduce another, `projective' tensor product of $PG$--spaces that linearizes what was 
called in the cited textbook, as well as in this paper, completely bounded bioperators. 

Fix, for a time, two normed $PQ$--spaces $E$ and $F$. 

\begin{definition} 
A pair $(\Theta,\theta)$, consisting of a normed $PQ$--space $\Theta$ and a completely contractive 
bioperator $\theta:E\times F\to\Theta$, is called non-completed {\it proto-operator-projective 
tensor product} of $E$ and $F$  or, for brevity, {\it projective tensor product}  of $E$ and $F$
 if, for every completely bounded bioperator
$\rr:E\times F\to G$, where $G$ is a $PQ$--space, there exists a unique completely bounded operator 
$R:\Theta\to G$ such that the diagram 
\[
\xymatrix@R-10pt@C+15pt{
E\times F \ar[d]^{\theta} \ar[dr]^{\rr} & \\
\Theta \ar[r]^R  &  G  } 
\]
\noindent is commutative, and moreover $\|R\|_{cb}=\|\rr\|_{cb}$. 
\end{definition}
Uniqueness, in a proper sense, of such a pair is a particular case of the general-categorical 
observation, concerning the uniqueness of the initial object in a category (cf.\cite{mcl}). We 
shall prove the existence of such a pair, displaying its explicit construction. 

First, we need an additional version of the operation ` $\di$ ', this time connecting 
 elements  of amplifications. Namely, for $u\in \cK E,v\in\cK F$ we set 
 $u\di v:= \vartheta_\ii(u,v)\in\cK(E\ot F)$, where $\vartheta:E\times F\to E\ot 
F:(x,y)\mt x\ot y$ is the canonical bilinear operator. In particular, for elementary tensors we 
have $ax\di by:=(a\di b)(x\ot y).$ 

Note that for all $a,b,c,d\in\bb$ and $u\in \cK E,v\in\cK F$ we have 
\begin{align}
(a\di b)\cd(u\di v)\cd(c\di d)=(a\cd u\cd c)\di(b\cd v\cd d).
\end{align}
One can immediately verify this formula on elementary tensors. 

It is easy to show that every $U\in\cK(E\ot F)$ can be represented as 
\begin{align}
\sum_{k=1}^na_k\cd(u_k\di v_k)\cd b_k
\end{align}
\noindent for some $a_k,b_k\in\bb, u_k\in\cK E,v_k\in\cK F, k=1,...,n$ 
(see details in~\cite[Section 7.2]{heb2}). This implies that the operator $\bb\ot\cK E\ot\cK 
F\ot\bb\to\cK(E\ot F)$, associated with the 
4-linear operator $(a,u,v,b)\mt a\cd(u\di v)\cd b$, is surjective. Thus $\cK(E\ot F)$ can be 
endowed with the semi-norm of the respective quotient space of $\bb\ot_{pr}\cK E\ot_{pr}\cK 
F\ot_{pr}\bb$, denoted by $\|\cd\|_{pop}$. In other words, for $U\in\cK(E\ot F)$ we have 
\begin{align}
\|U\|_{pop}:=\inf\{\sum_{k=1}^n\|a_k\|\|u_k\|\|v_k\|\|b_k\|\},
\end{align}
where the infimum is taken over all possible representations of $U$ in the form given by (5.2). 

Now observe that $\bb\ot_{pr}\cK E\ot_{pr}\cK F\ot_{pr}\bb$ is a contractive $\bb$-bimodule, being 
considered as a tensor product of the left $\bb$-module $\bb$ with the linear space $\cK E\ot\cK F$ 
and the right $\bb$-module $\bb$. Therefore $\cK(E\ot F)$ is the image of a contractive 
$\bb$-bimodule with respect to a quotient map of semi-normed spaces. Since the latter map is 
obviously a bimodule morphism, we easily obtain that the bimodule $(\cK(E\ot F),\|\cd\|_{pop}$ is 
also contractive. 

We see that $\|\cd\|_{pop}$ is a $PQ$--semi-norm on $E\ot F$. Denote the respective semi-normed 
$PQ$--space by $E\ot_{pop}F$. 

Finally, note that if $\rr:E\times F\to G$ is a bioperator, and $R:E\ot F\to G$ is the associated 
linear operator, then we obviously have the formula 
\begin{align}
R_\ii(u\di v)=\rr_\ii(u,v).
\end{align}
\begin{theorem}
 {\rm (Existence theorem)}. The pair $(E\ot_{pop}F,\vartheta)$  is a non-completed projective tensor 
 product of $E$ and $F$.
\end{theorem}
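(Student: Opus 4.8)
The plan is to verify the three requirements of Definition 5.1 in turn: that $\vartheta$ is completely contractive with respect to $\|\cdot\|_{pop}$, that every completely bounded bioperator factors through $\vartheta$ via a completely bounded operator of the same $cb$-norm, and that this factorization is unique. The uniqueness is immediate from the algebraic fact that $E\otimes F$ is linearly spanned by elementary tensors $x\otimes y=\vartheta(x,y)$, so any $R$ with $R\circ\vartheta=\rr$ is determined on a spanning set. The bulk of the work is the other two items.

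First I would check that $\vartheta$ is completely contractive, i.e.\ that $\|u\di v\|_{pop}\le\|u\|\,\|v\|$ for $u\in\cK E$, $v\in\cK F$. This is essentially built into the definition: $u\di v$ is itself a representation of the form (5.2) with $n=1$, $a_1=b_1=\id$, so the infimum in (5.3) is bounded above by $\|\id\|\,\|u\|\,\|v\|\,\|\id\|=\|u\|\,\|v\|$. Next, given a completely bounded bioperator $\rr:E\times F\to G$, let $R:E\otimes F\to G$ be the associated linear map. I must show $R$ is completely bounded with $\|R\|_{cb}\le\|\rr\|_{cb}$. Take $U\in\cK(E\otimes F)$ and any representation $U=\sum_k a_k\cd(u_k\di v_k)\cd b_k$. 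Since $R_\ii$ is a $\bb$-bimodule morphism (it is the amplification of a linear map), $R_\ii(U)=\sum_k a_k\cd R_\ii(u_k\di v_k)\cd b_k=\sum_k a_k\cd\rr_\ii(u_k,v_k)\cd b_k$ using (5.4). Now the bimodule $\cK G$ is contractive, so $\|R_\ii(U)\|\le\sum_k\|a_k\|\,\|\rr_\ii(u_k,v_k)\|\,\|b_k\|\le\sum_k\|a_k\|\,\|\rr\|_{cb}\,\|u_k\|\,\|v_k\|\,\|b_k\|$. Taking the infimum over representations gives $\|R_\ii(U)\|\le\|\rr\|_{cb}\,\|U\|_{pop}$, so $R$ is completely bounded with $\|R\|_{cb}\le\|\rr\|_{cb}$.

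For the reverse inequality $\|R\|_{cb}\ge\|\rr\|_{cb}$, I would use that $\rr_\ii=R_\ii\circ(\text{the map }(u,v)\mapsto u\di v)$ by (5.4), combined with the already-established complete contractivity of $\vartheta$: for $u\in\cK E$, $v\in\cK F$ with $\|u\|,\|v\|\le1$ we get $\|\rr_\ii(u,v)\|=\|R_\ii(u\di v)\|\le\|R\|_{cb}\,\|u\di v\|_{pop}\le\|R\|_{cb}$, hence $\|\rr\|_{cb}\le\|R\|_{cb}$. Combining the two inequalities yields $\|R\|_{cb}=\|\rr\|_{cb}$, which is exactly the norm equality demanded in Definition 5.1.

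The one genuinely non-routine point — and the place I would be most careful — is showing that $\|\cdot\|_{pop}$ is actually a \emph{norm} (not merely a semi-norm) on $E\otimes F$ when $E$ and $F$ are normed $PQ$-spaces, since Definition 5.1 requires $\Theta$ to be a normed $PQ$-space. By Proposition 2.4 it suffices to know that the underlying semi-normed space of $E\ot_{pop}F$ is normed, equivalently that $\|\cdot\|_{pop}$ does not annihilate any nonzero $x\otimes$-combination. The clean way to see this is to exhibit a single completely contractive bioperator $\rr:E\times F\to G$ into some $PQ$-space $G$ whose associated $R$ is injective on $E\otimes F$ — for instance built from separating families of completely bounded (indeed bounded) functionals on $E$ and $F$, using the ``automatic complete boundedness'' of bounded functionals into ${^{(\ii)}}\co$ noted after Proposition 3.2 together with the completely bounded bilinear functional $f\times g$ discussed in Section 3. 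Then $\|x\|_{pop}\ge\|R_\ii(\cdot)\|/\|R\|_{cb}>0$ for the relevant elements, so $\|\cdot\|_{pop}$ is a norm; alternatively one may simply invoke that the underlying norm of $E\ot_{pop}F$ dominates (a constant times) the classical projective tensor norm $\|\cdot\|_{pr}$ on $E\otimes F$, which is already known to be a norm. With this in hand, $E\ot_{pop}F$ is a genuine normed $PQ$-space and all hypotheses of Definition 5.1 are met, completing the proof.
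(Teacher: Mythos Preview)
Your proof is correct and follows essentially the same route as the paper's own proof: the same trivial representation $u\di v=\id\cd(u\di v)\cd\id$ for complete contractivity of $\vartheta$, the same bimodule-morphism estimate via (5.4) for $\|R\|_{cb}\le\|\rr\|_{cb}$, the same combination of (5.4) and (5.5) for the reverse inequality, and the same appeal (through Proposition 2.4) to the completely bounded bilinear functionals $f\times g$ from Section 3 to upgrade the semi-norm to a norm. The only remark is that your throwaway ``alternative'' --- that the underlying norm of $E\ot_{pop}F$ dominates a constant times $\|\cdot\|_{pr}$ --- is not independently obvious and would itself be established by exactly the functional argument you already gave, so it is not a genuine shortcut.
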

We prefer to give a self-contained proof of the theorem, despite some of its parts (not all) 
resemble with what was said in~\cite{heb2} under the assumption that we deal with quantum spaces. 
\begin{proof}
First, for arbitrary $u\in\cK E, v\in\cK F$ we have, of course, $u\di v=\id\cd(u\di v)\cd\id$. 
Therefore the bioperator $\vartheta$, considered with values in $E\ot_{pop}F$, is completely 
contractive, or, equivalently, we have 

\begin{align}
\|u\di v\|_{pop}\le\|u\|\|v\|.
\end{align}
%

Now let $G$ be a $PQ$--space, $\rr:E\times F\to G$  a completely bounded bioperator, and 
$R:E\ot_{pop}F\to G$ the associated linear operator. We want to show that $R$ is completely bounded 
and that $\|\rr\|_{cb}=\|R\|_{cb}$. 

Take $U\in\cK(E\ot_{pop}F)$ and represent it as in (5.2). Since $R_\ii$ is a $\bb$-bimodule 
morphism, we have by (5.4) that $R_\ii(U)=\sum_{k=1}^na_k\cd\rr_\ii(u_k, v_k)\cd b_k$, hence 
$\|R_\ii(U)\|\le\|\rr\|_{cb}\sum_{k=1}^n\|a_k\|\|u_k\|\|v_k\|\|b_k\|$. Therefore the definition of 
$\|\cd\|_{pop}$ implies that $R$ is completely bounded, together with $\rr$, and 
$\|R\|_{cb}\le\|\rr\|_{cb}$. The inverse estimate follows from the inequality 
$\|\rr_\ii(u,v)\|\le\|R_\ii\|\|u\|\|v\|$, which, in its turn, immediately follows from  (5.4) and 
(5.5). 

Now consider the diagram from Definition 5.1 with  $E\ot_{pop}F$ and $\vartheta$ in the capacity of 
$\Theta$ and $\theta$, respectively. It is known from linear algebra, that $R$ is the only  linear 
operator, making the diagram commutative. Thus we see that the pair $(E\ot_{pop}F,\vartheta)$ 
satisfies almost all requirements given in Definition 5.1. The only remaining thing is to show that 
the semi-norm $\|\cd\|_{pop}$ is actually a norm. 

By Proposition 2.4, for this aim it is sufficient to show that, for every non-zero elementary 
tensor $Qw$, where $Q$ is a rank 1 operator of norm 1 and $w\in E\ot_{pop}F, w\ne0$, we have 
$\|Qw\|_{pop}\ne0$. Since $E$ and $F$ are {\it normed} spaces, there exist bounded functionals 
$f:E\to\co, g:F\to\co$ such that for $f\ot g:E\ot F\to\co$ we have $(f\ot g)w\ne0$. As we know from 
the previous section, the bilinear functional $\rr:=f\times g:E\times F\to{^{(\ii)}}\co$  is 
completely bounded. Therefore, choosing $G:={^{(\ii)}}\co$, we see that the associated linear 
functional, that is $f\ot g:E\ot_{pop}F\to{^{(\ii)}}\co$, is also completely bounded. Hence, we 
have 
$$
|(f\ot g)w|=\|Q[(f\ot g)(w)]\|=\|[(f\ot g)_\ii(Qw)\|\le\|f\ot g\|_{cb}\|Qw\|_{pop}.
$$
 Therefore $\|Qw\|_{pop}\ne0$ since $(f\ot g)w\ne0$.
\end{proof}
\bigskip
Note that in the underlying space of $E\ot_{pop} F$ we have 
\begin{align}
\|x\ot y\|\le\|x\|\|y\| \q {\rm for \q all}\q x\in E,y\in F.
\end{align}
Indeed, take two operators $P,Q\in\cK$ of rank 1 and of norm 1. We see that $\|P\di Q\|=1$ and that 
$P\di Q$ has also rank 1. Therefore, $\|x\ot y\|=\|(P\di Q)x\ot y\|_{pop}=\|Px\di Qy\|$. It remains 
to use (5.5). 

 (In fact, in (5.6), as well as in (5.5), we have the exact equality, but we shall not discuss it now).

\bigskip
So far, we spoke about general (normed) $PQ$--spaces. But their tensor product has a natural 
analogue in the context of {\it complete} or {\it Banach $PQ$--spaces}. The latter are, by 
definition, $PQ$--spaces with complete underlying normed spaces. As in the `classical' context, for 
every $PQ$--space $E$ there exists its {\it completion}, which is defined as a 
 pair $(\overline{E},i:E\to\overline{E})$, consisting of a complete $PQ$--space and a
completely isometric operator, such that the same pair, considered for respective underlying spaces 
and operators, is the `classical' completion of $E$ as of a normed space. The proof of the 
respective existence theorem repeats word by word the simple argument given in~\cite[Chapter 
4]{heb2} for $Q$--spaces. We only recall that the norm on $\cK\overline{E}$ is introduced with the 
help of the natural embedding of $\cK\overline{E}$ into $\overline{\cK E}$, the `classical' 
completion of $\cK E$. 

 It is easy to observe that the characteristic universal property of the `classical' completion has its 
 proto-quantum version ({\it ibidem}). Namely, if $(\overline{E}, i)$ is the
  completion of a $PQ$--space $E, F$ a $PQ$--space and $\va: E\to F$ is a completely bounded operator, 
  then there exists a unique completely bounded operator
  $\overline{\va}:\overline{E}\to\overline{F}$ that extends, in the obvious sense, $\va$. Moreover, we 
  have $\|\overline{\va}\|_{cb}=\|{\va}\|_{cb}$.

Let us distinguish the following fact that will be useful. Its proof repeats word by word the 
argument in Proposition 4.8 in~\cite{heb2}. 

\begin{proposition} \label{context}
Let $\va:E\to F$ be a completely isometric isomorhism between $PQ$--spaces. Then its continuous 
extension $\overline{\va}:\overline{E}\to\overline{F}$ is also a completely isometric isomorhism. 
\end{proposition}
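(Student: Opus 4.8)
The plan is to deduce Proposition~\ref{context} from the universal property of the completion plus the uniqueness part of that universal property. First I would recall that $\va:E\to F$ being a completely isometric isomorphism means $\va$ is invertible, with inverse $\va^{-1}:F\to E$, and both $\va$ and $\va^{-1}$ are completely contractive (indeed completely isometric). Applying the universal property of the completion twice, we obtain $\overline{\va}:\overline{E}\to\overline{F}$ extending $\va$ with $\|\overline{\va}\|_{cb}=\|\va\|_{cb}$, and $\overline{\va^{-1}}:\overline{F}\to\overline{E}$ extending $\va^{-1}$ with $\|\overline{\va^{-1}}\|_{cb}=\|\va^{-1}\|_{cb}$; in particular both are completely contractive.

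Next I would check that $\overline{\va^{-1}}$ is the two-sided inverse of $\overline{\va}$. The composite $\overline{\va^{-1}}\circ\overline{\va}:\overline{E}\to\overline{E}$ is a completely bounded operator which, restricted to the dense subspace $i(E)$, agrees with $i\circ(\va^{-1}\circ\va)=i$, i.e.\ with the canonical embedding $i:E\to\overline E$. But $\id_{\overline E}$ is also such an extension. By the uniqueness clause in the universal property of the completion (applied to the completely bounded operator $i:E\to\overline E$, whose extension to $\overline E$ is unique), we conclude $\overline{\va^{-1}}\circ\overline{\va}=\id_{\overline E}$. Symmetrically $\overline{\va}\circ\overline{\va^{-1}}=\id_{\overline F}$, so $\overline{\va}$ is a bijection with completely contractive inverse $\overline{\va^{-1}}$.

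Finally I would upgrade ``completely contractive with completely contractive inverse'' to ``completely isometric''. For any $u\in\cK\overline{E}$ we have $\|\overline{\va}_\ii(u)\|\le\|u\|$ since $\|\overline{\va}\|_{cb}\le 1$, and also $\|u\|=\|(\overline{\va^{-1}})_\ii(\overline{\va}_\ii(u))\|\le\|\overline{\va}_\ii(u)\|$ since $\|\overline{\va^{-1}}\|_{cb}\le 1$; hence $\|\overline{\va}_\ii(u)\|=\|u\|$, so $\overline{\va}$ is completely isometric. Together with bijectivity this shows $\overline{\va}$ is a completely isometric isomorphism, as claimed.

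The only point requiring genuine care — the ``main obstacle'', though it is mild — is the bookkeeping in the uniqueness step: one must phrase the universal property of the completion correctly so that it applies to $i:E\to\overline E$ itself (target a complete space, $i$ completely bounded) and yields uniqueness of the extension to all of $\overline E$, not merely existence. Once that is stated as in the paragraph preceding the proposition, the argument is a routine diagram chase; I would simply remark that it repeats verbatim the proof of Proposition~4.8 in~\cite{heb2}, the $Q$-space case, since nothing in that argument used the second Ruan axiom.
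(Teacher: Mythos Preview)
Your proposal is correct and is precisely the standard argument: extend $\va$ and $\va^{-1}$ to the completions via the universal property, use uniqueness to see that the extensions are mutual inverses, and then observe that a completely contractive bijection with completely contractive inverse is completely isometric. The paper itself gives no proof here but simply refers to Proposition~4.8 of~\cite{heb2}, whose argument is exactly the one you have written out (as you yourself note in your closing remark).
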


Now we can speak of the {\it completed} projective tensor product of two 
$PQ$--spaces. Its definition repeats Definition 5.1, but, what is essential, with the following 
difference: 
{\it $\Theta$ and $G$ are supposed to be complete}.  Using the universal property of the 
completion, we immediately see that {\it the completed projective tensor product of 
$PQ$--spaces $E$ and $F$ exists: it is the pair $(E\widehat\ot_{pop} F,\widehat\vartheta)$, where 
$E\widehat\ot_{pop}F $ is the completion of the $PQ$--space $E\ot_{pop} F$, and $\widehat\vartheta$ 
acts as $\vartheta$, but with range $E\widehat\ot_{pop} F $}.

\section{Tensoring by $L_1(\cd)$, and some other computation}
In this section we show that for certain concrete tensor factors their projective tensor product 
also becomes something concrete and transparent. We shall see that the behavior of this tensor 
product resembles the behavior of the projective tensor product in the classical context. 

Denote the completion of the $PQ$-space $E\ot_{pr} F$ from Example \ref{ex8} by $E\widehat\ot_{pr} 
F$. Clearly, it is a $P$--quantization of the `classical' completed projective tensor product, 
denoted also by $E\widehat\ot_p F$; it will not create a confusion.

\begin{theorem} 
Let  $E$ be a normed space, $F$ a $PQ$--space, $p\in[1,\ii]$, ${^{(p)}}E$ the $PQ$--space from 
Example 2.5, and $E\ot_{pr}F$ the $PQ$--space from Example 4.2. Suppose that $F$ is $p$--convex. 
Then there exists a completely isometric isomorphism $I: {^{(p)}}E\ot_{pop}F\to E\ot_{pr}F$, acting 
as the identity operator on the common underlying linear space of our $PQ$--spaces. As a corollary 
{\rm (see Proposition 5.3),} there exists a  completely isometric isomorphism  $\widehat I : 
{^{(p)}}E\widehat\ot_{pop}F\to E\widehat\ot_{pr}F$, which is the extension by continuity of $I$. 
\end{theorem}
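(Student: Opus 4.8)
The plan is to show that the identity map on the common underlying linear space $E\ot F$ is completely contractive in both directions, viewed as a map ${}^{(p)}E\ot_{pop}F\to E\ot_{pr}F$ and back. Complete contractivity of $I:{}^{(p)}E\ot_{pop}F\to E\ot_{pr}F$ is already in hand: by Proposition~\ref{pr} the canonical bioperator $\vartheta:{}^{(p)}E\times F\to E\ot_{pr}F$ is completely contractive (this uses $p$-convexity of $F$), so by the universal property in Theorem~5.2 the associated linear operator $E\ot_{pop}F\to E\ot_{pr}F$, which is exactly $I$, is completely contractive. The real content is the reverse inequality: for every $U\in\cK(E\ot F)$ one must show $\|U\|_{pop}\le\|\beta(U)\|_{pr}$, where $\beta:\cK(E\ot F)\to E\ot_{pr}(\cK F)$ is the isometric isomorphism of Example~\ref{ex8} that defines the norm on $E\ot_{pr}F$.

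First I would set up the estimate concretely. Fix $U$ and $\e>0$; choose a representation $\beta(U)=\sum_{k=1}^n x_k\ot w_k$ with $x_k\in E$, $w_k\in\cK F$ and $\sum_k\|x_k\|\|w_k\|<\|\beta(U)\|_{pr}+\e$. I want to untangle this into a representation of $U$ of the form~(5.2), i.e. as $\sum_j a_j\cd(u_j\di v_j)\cd b_j$, and control $\sum_j\|a_j\|\|u_j\|\|v_j\|\|b_j\|$. The key device is the operator $S:\zeta\mt e\di\zeta$ and the rank-one projection $P=e\circ e$ from~(4.1), which satisfy $b=S^*(P\di b)S$. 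Applying the amplified/extended version of these identities to an element $w\in\cK F$ gives $w = S\cd(P\di w)\cd S^*$ up to the appropriate conjugations, and more usefully lets one write each $x_k\ot w_k$ inside $\cK(E\ot F)$ as something of the shape $a\cd((\lambda\, x_k)\di w_k)\cd b$ with $\lambda\in{}^{(p)}\co$ a rank-one element and $a,b$ built from $S$, so that $\|\lambda\|={}\|\lambda\|_p$ equals the scalar size and $\|a\|=\|b\|=1$. Here one exploits that the amplification of $E={}^{(1)}E$'s scalars sits inside; the point is that $x_k$, as an element of ${}^{(p)}E$ with a rank-one coefficient, has amplification-norm equal to $\|x_k\|$, by Proposition~2.6 applied with $p$ (or directly since rank-one coefficients give the underlying norm). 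Summing over $k$ and using~(5.3) yields $\|U\|_{pop}\le\sum_k\|x_k\|\|w_k\|<\|\beta(U)\|_{pr}+\e$, and letting $\e\to0$ finishes the reverse direction.

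Combining the two directions, $I$ is a completely isometric isomorphism of $PQ$-spaces; since it is literally the identity on the underlying linear space, nothing further is needed for the first assertion. For the corollary, I would invoke Proposition~\ref{context}: $I$ extends by continuity to $\widehat I:\overline{{}^{(p)}E\ot_{pop}F}\to\overline{E\ot_{pr}F}$, which is again a completely isometric isomorphism; by definition $\overline{{}^{(p)}E\ot_{pop}F}={}^{(p)}E\widehat\ot_{pop}F$ and $\overline{E\ot_{pr}F}=E\widehat\ot_{pr}F$, so $\widehat I$ is the desired map.

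\textbf{Main obstacle.} The one genuinely delicate step is the reverse norm inequality, specifically packaging a projective representation $\beta(U)=\sum_k x_k\ot w_k$ in $E\ot_{pr}(\cK F)$ into a representation of $U$ of the form~(5.2) while keeping the total weight $\sum\|a_k\|\|u_k\|\|v_k\|\|b_k\|$ bounded by $\sum_k\|x_k\|\|w_k\|$. This is exactly where the isometry $\beta$, the conjugation identities~(4.1), and the computation of rank-one amplification norms (Propositions~2.6 and~\ref{pr7}) all have to be combined carefully; everything else is formal consequences of Theorem~5.2 and Proposition~\ref{context}.
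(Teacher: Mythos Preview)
Your proposal is correct and follows essentially the same route as the paper's proof: the forward contraction comes from Proposition~\ref{pr}, and for the reverse inequality you take a near-optimal representation $\beta(U)=\sum_k x_k\ot w_k$, form $V=\sum_k (Px_k)\di w_k$ with $P$ a rank-one projection, use the identity $b=S^*(P\di b)S$ from~(4.1) to see that $S^*\cd V\cd S=U$, and then bound $\|V\|_{pop}$ by $\sum_k\|x_k\|\|w_k\|$ via~(5.5). The only minor slip is the direction of conjugation (you wrote $S\cd(\cdot)\cd S^*$ where it should be $S^*\cd(\cdot)\cd S$), but this is cosmetic.
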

\begin{proof}
Consider the canonical bioperator $\vartheta:{^{(p)}}E\times F\to E\ot_{pr}F$. Since the 
$PQ$--space is $p$--convex, it gives rise, by virtue of Proposition \ref{pr}, to the completely 
contractive operator $I$, acting as in the formulation. 
 Therefore it is sufficient to show that 
 for every $U\in\cK(E\ot F)$ its norm in $\cK[{^{(p)}}E\ot_{pop}F]$ is not greater than 
 $\|I_\ii(U)\|$ or, equivalently, than the norm of $\beta(U)$ in $E\ot_{pr}\cK F$. 

Fix $U$ and choose $\varepsilon>0$; then there exists a representation $\beta(U)=\sum_{k=1}^n 
x_k\ot v_k; x_k\in E, v_k\in\cK F$ such that $\sum_{k=1}^n\|x_k\|\|v_k 
\|<\|\beta(U)\|+\varepsilon=\|I_\ii(U)\|+\varepsilon$. 

Now choose an arbitrary rank one projection $P\in\cK$ and set $V:=\sum_{k=1}^nPx_k\di 
v_k\in\cK(E\ot F)$. By (4.1), there exists an isometry $S\in\bb$ such that $S^*(P\di a)S=a$ for 
every $a\in\cK$. Representing every $v_k$ as a sum of elementary tensors, we easily see that 
$\beta(S^*\cd V\cd S)=\sum_{k=1}^n x_k\ot v_k=\beta(U)$. Therefore $U=S^*\cd V\cd S$, hence 
$\|U\|_{pop}\le\|V\|_{pop}$. But by (5.5) we have 
$\|V\|_{pop}\le\sum_{k=1}^n\|Px_k\|\|v_k\|=\sum_{k=1}^n\|x_k\|\|v_k\|$, and consequently 
$\|U\|_{pop}\le\|I_\ii(U)\|+\varepsilon$. Since  such an estimate holds for every $\varepsilon>0$, 
we are done. 
\end{proof}

\begin{corollary}
 With $p$ and $F$ as above, there exists a completely isometric isomorphism 
$I: {^{(p)}}\co\ot_{pop}F\to F$, acting as $\lm\ot x\mt\lm x$. 
\end{corollary}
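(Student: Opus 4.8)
The plan is to deduce the corollary directly from Theorem 6.1 by specialising the normed space $E$ to $\co$. First I would observe that $\co$ is trivially a normed space, so Theorem 6.1 applies with $E=\co$ and the given $p$-convex $PQ$--space $F$, yielding a completely isometric isomorphism $I:{^{(p)}}\co\ot_{pop}F\to\co\ot_{pr}F$ that acts as the identity on the common underlying linear space $\co\ot F$.

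Next I would invoke the standard (purely algebraic plus isometric) identification $\co\ot_{pr}F\cong F$ via $\lm\ot x\mt\lm x$; this is a \emph{classical} fact about projective tensor products of normed spaces, and one only needs to check that as a $P$--quantization the space $\co\ot_{pr}F$ of Example 4.2 actually coincides, as a $PQ$--space, with $F$ itself. For this one traces through the construction in Example 4.2: the linear isomorphism $\beta:\cK(\co\ot F)\to\co\ot_{pr}(\cK F)$ becomes, after the canonical identification $\co\ot(\cK F)\cong\cK F$, simply the canonical identification $\cK(\co\ot F)\cong\cK F$, and the norm $\|U\|:=\|\beta(U)\|$ on $\cK(\co\ot F)$ is transported to the original norm on $\cK F$. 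Hence the $PQ$--norms agree, and the map $\co\ot_{pr}F\to F:\lm\ot x\mt\lm x$ is completely isometric.

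Composing $I$ with this identification gives the desired completely isometric isomorphism ${^{(p)}}\co\ot_{pop}F\to F$ sending $\lm\ot x$ to $\lm x$, which is exactly the assertion. The main (and only) obstacle is the bookkeeping in the previous paragraph: verifying that the recipe of Example 4.2, applied with the scalar field as the non-$PQ$ tensor factor, reproduces $F$ on the nose rather than merely isometrically at the level of underlying normed spaces. Once one writes out $\beta$ on an elementary tensor $a(\lm\ot x)\mt\lm\ot ax$ and cancels the scalar, this is immediate, so the proof is short. The final statement also matches the in particular clause of Proposition \ref{pr}, namely that $\rr:{^{(p)}}\co\times F\to F:(\lm,x)\mt\lm x$ is completely contractive, which is precisely the bioperator whose linearisation $I$ is.
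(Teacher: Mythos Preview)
Your proposal is correct and follows exactly the route the paper intends: the corollary is stated without proof immediately after Theorem 6.1, and your argument---specialising $E=\co$ in that theorem and then identifying the $PQ$--space $\co\ot_{pr}F$ of Example 4.2 with $F$---is precisely the intended deduction. The bookkeeping you supply for the identification $\co\ot_{pr}F\cong F$ at the $PQ$--level is the only content not made explicit in the paper, and your verification via $\beta$ is correct.
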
 

In its turn, this assertion, since ${^{(q)}}\co$ is $p$--convex provided $q<p$, implies
\begin{corollary}
${^{(p)}}\co\ot_{pop}{^{(q)}}\co={^{(r)}}\co$, where $r=\max\{p,q\}$. 
\end{corollary}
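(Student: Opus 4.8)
The plan is to derive this directly from Corollary 6.2, which identifies ${^{(s)}}\co\ot_{pop}F$ with $F$ whenever $F$ is $s$-convex. The only thing to arrange is that one may always take the tensor factor with the smaller index as the distinguished scalar factor ${^{(s)}}\co$, and that the remaining factor then satisfies the required convexity hypothesis. So I set $s:=\min\{p,q\}$ and $r:=\max\{p,q\}$, so that $\{{^{(p)}}\co,{^{(q)}}\co\}=\{{^{(s)}}\co,{^{(r)}}\co\}$.

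First I would record the elementary convexity fact. The space ${^{(r)}}\co$ is an ${\cal L}^r$-space, so for $u_1,u_2\in\cK({^{(r)}}\co)$ with orthogonal supports one has $\|u_1+u_2\|=(\|u_1\|^r+\|u_2\|^r)^{1/r}$. Since $s\le r$, the monotonicity of the $\ell_p$-norms on nonnegative pairs gives $(\|u_1\|^r+\|u_2\|^r)^{1/r}\le(\|u_1\|^s+\|u_2\|^s)^{1/s}$, so ${^{(r)}}\co$ is $s$-convex (it suffices to treat $n=2$, as noted after the definition of convexity). Corollary 6.2, applied with $p$ there replaced by $s$ and with $F:={^{(r)}}\co$, then furnishes a completely isometric isomorphism ${^{(s)}}\co\ot_{pop}{^{(r)}}\co\simeq{^{(r)}}\co$, given by $\lm\ot\mu\mapsto\lm\mu$.

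If $p\le q$ this is already the assertion, since then $({^{(s)}}\co,{^{(r)}}\co)=({^{(p)}}\co,{^{(q)}}\co)$ and $r=q=\max\{p,q\}$. If $q<p$, then $({^{(s)}}\co,{^{(r)}}\co)=({^{(q)}}\co,{^{(p)}}\co)$, so the isomorphism above reads ${^{(q)}}\co\ot_{pop}{^{(p)}}\co\simeq{^{(p)}}\co$, using that ${^{(p)}}\co$ is $q$-convex (the same monotonicity, now with $q<p$). To reach the product ${^{(p)}}\co\ot_{pop}{^{(q)}}\co$ as literally written I would invoke the commutativity of $\ot_{pop}$. This is the one ingredient not already quoted, and it is the only genuine (though mild) obstacle: for a completely bounded bioperator $\rr:E\times F\to G$ the flipped bioperator $\rr':F\times E\to G$, $(y,x)\mapsto\rr(x,y)$, satisfies $\rr'_\ii(v,u)=\bigtriangleup\cd\rr_\ii(u,v)\cd\bigtriangleup$, where $\bigtriangleup$ is the flip unitary of Proposition 4.3(i); since $\bigtriangleup$ is unitary and $\cK G$ is contractive, $\|\rr'\|_{cb}=\|\rr\|_{cb}$. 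Comparing the universal properties of $E\ot_{pop}F$ and $F\ot_{pop}E$ then yields the canonical completely isometric isomorphism $E\ot_{pop}F\simeq F\ot_{pop}E$, which closes the case $q<p$ and completes the proof. Everything else is a direct invocation of Corollary 6.2 together with the elementary $\ell_p$-monotonicity underlying $s$-convexity.
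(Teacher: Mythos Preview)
Your proof is correct and follows essentially the same route as the paper, which simply says that Corollary~6.2 together with the $p$-convexity of ${^{(q)}}\co$ (for the appropriate ordering of $p,q$) yields the result. You are in fact more careful than the paper: you explicitly supply the commutativity of $\ot_{pop}$ via the flip unitary $\bigtriangleup$ to cover the case $q<p$, whereas the paper's one-line justification tacitly assumes this (and, incidentally, contains what looks like a typo in the direction of the inequality).
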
 

\begin{remark}
The projective tensor product of two ${\cal L}^p$--spaces is not bound to be again an ${\cal 
L}^p$--space. Indeed, consider the projective tensor square of the ${\cal L}^2$--space 
$\ell_2({^{(2)}}\co)$ and the elements $Pe_1,Qe_2\in\cK(\ell_2({^{(2)}}\co))$, where $P,Q$ are 
orthogonal projections in $\cK$ (i.e. $PQ=0$), and $e_1,e_2$ are orts in $\ell_2$. Then it is not 
difficult to show that, despite our elements have orthogonal supports, we have that 
$\|Pe_1+Qe_2\|_{pop}=2$, whereas $(\|Pe_1\|^2+\|Qe_2\|^2)^{\frac{1}{2}}=\sqrt{2}$. Incidentally, it 
is shown in~\cite{hend} that there exists a kind of projective tensor product in the class of 
 the so-called $p$--convex $p$--multi-spaces (see~\cite{dlot} and 
also~\cite{lam}), reflecting their special properties. Therefore
one may suggest that $p$--convex $PQ$--spaces have their own projective tensor product, defined 
only within that class and not leading out of this class. 
\end{remark}
In the following proposition we deal, generally speaking, with $PQ$--spaces that are not 
$p$--convex. 

\begin{theorem} 
Let  $E$ and $F$ be normed spaces, $p\in[1,\ii]$ 
Then there exists a completely isometric isomorphism $I: 
{^{(p)}}E\ot_{pop}{{^{(p)}}F}\to{^{(p)}}(E\ot_{pr}F)$, acting as the identity operator on the 
common underlying linear space of our $PQ$--spaces. As a corollary {\rm (see Proposition 5.3) } 
there exists a  completely isometric isomorphism $\widehat I: 
{^{(p)}}E\widehat\ot_{pop}{{^{(p)}}F}\to {^{(p)}}(E\widehat\ot_{pr}F)$, which is the extension by 
continuity of $I$. 
\end{theorem}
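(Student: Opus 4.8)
The plan is to imitate the proof of Theorem 6.1, but without $p$-convexity we must supply the inequality in the other direction by hand. First I would produce the completely contractive operator $I\colon {^{(p)}}E\ot_{pop}{^{(p)}}F\to{^{(p)}}(E\ot_{pr}F)$: this is exactly Proposition 4.8 (the case $p=q$, so $r=p$), whose canonical bioperator $\rr\colon {^{(p)}}E\times{^{(p)}}F\to{^{(p)}}(E\ot_{pr}F)\colon(x,y)\mt x\ot y$ is completely contractive; since $\rr=\vartheta$ is the canonical bioperator, the associated linear operator $I$ acts as the identity on $E\ot F$ and satisfies $\|I\|_{cb}\le 1$. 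So, just as in Theorem 6.1, it remains to check that for every $U\in\cK(E\ot F)$ we have $\|U\|_{pop}\le\|I_\ii(U)\|$, i.e. $\|U\|_{pop}$ is bounded above by the norm of $U$ computed in $\cK({^{(p)}}(E\ot_{pr}F))$, which by the definition of ${^{(p)}}(\cd)$ (Example 2.5) is the norm in $\cK_p\ot_{pr}(E\ot_{pr}F)$.

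The heart of the argument is an efficient representation of $U$. Fix $U$ and $\e>0$. Using the identification of $\cK({^{(p)}}(E\ot_{pr}F))$ with $\cK_p\ot_{pr}(E\ot_{pr}F)$, choose a representation $U=\sum_{k=1}^n c_k\,(x_k\ot y_k)$ with $c_k\in\cK$, $x_k\in E$, $y_k\in F$, and $\sum_{k=1}^n\|c_k\|_p\|x_k\|_{pr}\|y_k\| < \|I_\ii(U)\|+\e$ (here I use that the projective tensor norm on $\cK_p\ot_{pr}(E\ot_{pr}F)$ is computed by such representations, and that $\|x_k\ot y_k\|_{pr}$ can be taken as $\|x_k\|\|y_k\|$ up to $\e$). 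Now the key trick: for a rank one projection $P\in\cK$ and the isometry $S\in\bb$ from (4.1) with $S^*(P\di a)S=a$ for all $a\in\cK$, and similarly an isometry $S'$ with $S'^*(a\di P)S'=a$, I would write each $c_k$ via its singular decomposition $c_k\approx\sum_l s^k_l P^k_l$ of (2.1) and then realize $c_k\,(x_k\ot y_k)$ inside $\cK(E\ot F)$ as $W\cd\bigl((a_k x_k)\di(b_k y_k)\bigr)\cd W'$ for suitable $a_k,b_k\in\cK$ with $\|a_k\|_p\|b_k\|_p = \|c_k\|_p$ and operators $W,W'$ of norm $1$ built from $S,S'$ and the unitaries of (2.1); this uses $\|a\di b\|_p=\|a\|_p\|b\|_p$ (Proposition 4.7) in reverse, to split a $p$-Schatten operator as a $\di$-product. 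Summing and applying (5.5) together with contractivity of the $\bb$-bimodule $\cK({^{(p)}}E\ot_{pop}{^{(p)}}F)$ then gives $\|U\|_{pop}\le\sum_k\|a_k x_k\|\,\|b_k y_k\| = \sum_k\|a_k\|_p\|x_k\|\,\|b_k\|_p\|y_k\| = \sum_k\|c_k\|_p\|x_k\|\|y_k\| < \|I_\ii(U)\|+\e$, and letting $\e\to0$ finishes the estimate.

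Granting the non-completed isomorphism $I$, the corollary about $\widehat I$ is immediate from Proposition 5.3: $I$ is a completely isometric isomorphism of $PQ$-spaces, hence its continuous extension $\widehat I\colon {^{(p)}}E\widehat\ot_{pop}{^{(p)}}F\to {^{(p)}}(E\widehat\ot_{pr}F)$ is again completely isometric, once we note that the completion of ${^{(p)}}(E\ot_{pr}F)$ is ${^{(p)}}(E\widehat\ot_{pr}F)$ (the $P$-quantization ${^{(p)}}(\cd)$ commutes with completion, since its amplification norm is the projective tensor norm with the fixed space $\cK_p$).

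\textbf{Main obstacle.} The delicate point is the "reverse $\di$-factorization" step: given an arbitrary $c\in\cK$ and a target split $\|a\|_p\|b\|_p=\|c\|_p$, one must genuinely produce $a,b\in\cK$ and norm-one operators (formed from the fixed unitary $\iota$, the isometry $S$ of (4.1), and the unitaries of (2.1)) so that, after conjugation inside the contractive bimodule $\cK(E\ot F)$, the element $c\,(x\ot y)$ is dominated in $\|\cd\|_{pop}$ by $\|a x\di b y\|$. Writing $c\approx\sum_l s_l P_l$ one can take $a\approx\sum_l s_l^{1/?}\cdots$ — the bookkeeping of exponents so that $\|a\|_p\|b\|_p=\|c\|_p$ (not $\|c\|_p^2$ or worse) and the verification that the resulting conjugating operators really have norm $1$ is where care is needed; this is precisely the place where $p$-convexity was used in Theorem 6.1 to shortcut the estimate, and here it must be replaced by the sharp multiplicativity of the $p$-Schatten norm under $\di$ (Proposition 4.7). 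Everything else is routine diagram-chasing on elementary tensors of amplifications.
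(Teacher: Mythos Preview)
Your overall strategy is correct and matches the paper's: use Proposition~4.7 (your ``4.8'') for the contractive direction, then for the reverse inequality start from a representation $U=\sum_k c_k(x_k\ot y_k)$ with $\sum_k\|c_k\|_p\|x_k\|\|y_k\|<\|I_\ii(U)\|+\e$ coming from $\cK_p\ot_{pr}E\ot_{pr}F$, and rewrite $U$ as a contraction of a sum of diamonds.

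However, the ``main obstacle'' you flag is not an obstacle at all, and your proposed resolution (singular-value decomposition of each $c_k$, splitting exponents between two factors, invoking multiplicativity of $\|\cdot\|_p$ under $\di$) is unnecessary. The paper uses the \emph{trivial} split: fix one rank-one projection $P$ and set
\[
V:=\sum_{k=1}^n Px_k\di(c_k y_k)=\sum_{k=1}^n (P\di c_k)(x_k\ot y_k).
\]
Formula~(4.1) gives a single isometry $S$ with $S^*(P\di c_k)S=c_k$ for every $k$, hence $S^*\cd V\cd S=U$ and $\|U\|_{pop}\le\|V\|_{pop}$. Then (5.5) yields $\|V\|_{pop}\le\sum_k\|Px_k\|\,\|c_ky_k\|$, and the point is that in $\cK({^{(p)}}F)=\cK_p\ot_{pr}F$ the norm is a cross-norm, so $\|c_ky_k\|=\|c_k\|_p\|y_k\|$ \emph{exactly}, while $\|Px_k\|=\|x_k\|$. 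No balancing, no Proposition~4.6, no bookkeeping: one factor is the rank-one projection, the other carries all of $c_k$. Your more elaborate factorization would work (indeed its simplest instance $a_k=P$, $b_k=c_k$ \emph{is} the paper's argument), but as stated it obscures the one-line computation and risks getting tangled in the non-uniqueness of $\di$-factorizations of a general finite-rank operator.
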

\begin{proof}
Consider the canonical bioperator 
$\vartheta:{^{(p)}}E\times{^{(p)}}F\to{^{(p)}}(E\widehat\ot_{pr}F)$ . It gives rise, by virtue of 
Proposition 4.7, to the completely contractive operator $I$, acting as in the formulation.  
Therefore it is sufficient to show that 
 for every $U\in\cK({^{(p)}}E\ot_{pop}{^{(p)}}F)$ we have $\|U\|_{pop}\le\|I_\ii(U)\|$.

Fix $U$ and choose $\varepsilon>0$. As a normed space, $\cK[{^{(p)}}(E\ot_{pr}F)]$ is 
$\cK_p\ot_{pr}E\ot_{pr}F$. Consequently, in the linear space $\cK(E\ot F)=\cK\ot E\ot F$ there 
exists a representation of $U$ or, which is the same, of $I_\ii(U)$ as $\sum_{k=1}^na_k\ot x_k \ot 
y_k; a_k\in\cK, x_k\in E, y_k\in F$ such that 
$\sum_{k=1}^n\|a_k\|_p\|x_k\|\|y_k\|<\|I_\ii(U)\|+\varepsilon$. 

Take an arbitrary rank 1 projection $P\in\cK$ and introduce an element $V:=\sum_{k=1}^nPx_k\di(a_k 
y_k)=\sum_{k=1}^n(P\di a_k)x_k\ot y_k\in\cK(E\ot F)$. As we know by (4.1), there exists an isometry 
$S\in\bb$ such that $a_k=S^*(P\di a_k)S$ for all $k$. It follows that $S^*\cd V\cd 
S=\sum_{k=1}^na_k(x_k \ot y_k)=U$. Therefore, for $U$ and $V$ as of elements of 
$\cK[{^{(p)}}E\widehat\ot_{pop}{{^{(p)}}F}]$, we have $\|U\|_{pop}\le\|V\|_{pop}$. But, by (5.5), 
we have $\|V\|_{pop}\le \sum_{k=1}^n\|Px_k\|\|a_k y_k\|$, where norms are taken in $\cK{^{(p)}}E$ 
and $\cK{^{(p)}}F$, respectively. Further, $\|Px_k\|=\|x_k\|$ and, since 
$\cK{^{(p)}}F=\cK_p\ot_{pr}F$, we have $\|a_k y_k\|=\|a_k\|_p\|y_k\|$. Consequently, we have 
$\|U\|_{pop}\le\sum_{k=1}^n\|a_k\|_p\|x_k\|\|y_k\| <\|I_\ii(U)\|+\varepsilon$. 
 Since such an estimate holds for every $\varepsilon>0$, we are done. 
\end{proof} 
Setting in this theorem $p:=1$, we obtain 
\begin{corollary} 
For all normed spaces $E$ and $F$ we have, up to a completely isometric isomorphism, 
$E_{\max}\ot_{pop}F_{\max}=(E\ot_{pr}F)_{\max}$ and \\
$E_{\max}\widehat\ot_{pop}F_{\max}=(E\widehat\ot_{pr}F)_{\max}$. 
\end{corollary}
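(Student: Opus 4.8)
The plan is to derive Corollary 6.6 directly from Theorem 6.5 by specializing $p:=1$ and recalling that, by definition, ${^{(1)}}E = E_{\max}$ for every normed space $E$ (this is stated explicitly in Example \ref{ex1}). Thus ${^{(1)}}E\ot_{pop}{^{(1)}}F$ becomes $E_{\max}\ot_{pop}F_{\max}$, and the target ${^{(1)}}(E\ot_{pr}F)$ becomes $(E\ot_{pr}F)_{\max}$; Theorem 6.5 then hands us a completely isometric isomorphism between them, acting as the identity on the common underlying linear space. This proves the first displayed equality.

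For the second (completed) equality, I would invoke the corollary already recorded inside Theorem 6.5: the extension by continuity $\widehat I: {^{(1)}}E\widehat\ot_{pop}{^{(1)}}F \to {^{(1)}}(E\widehat\ot_{pr}F)$ is a completely isometric isomorphism, by Proposition \ref{context}. Rewriting with the $\max$ notation, this is exactly $E_{\max}\widehat\ot_{pop}F_{\max} \simeq (E\widehat\ot_{pr}F)_{\max}$. One small point worth a sentence: the symbol $(E\widehat\ot_{pr}F)_{\max}$ should be understood as ${^{(1)}}$ applied to the completed classical projective tensor product $E\widehat\ot_{pr}F$, which agrees with the completion of the $PQ$--space $E\ot_{pr}F$ as noted just before Theorem 6.2 (where $E\widehat\ot_{pr}F$ is introduced as a $P$--quantization of $E\widehat\ot_p F$); so there is no ambiguity in the notation.

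Since this is purely a matter of unwinding definitions, there is no real obstacle here — the only thing to be careful about is that the phrase ``up to a completely isometric isomorphism'' in the statement is matched by what Theorem 6.5 actually delivers (an explicit $I$ that is the identity on underlying spaces, respectively its continuous extension $\widehat I$), so the proof is essentially a one-line appeal with the substitution $p=1$. I would write it as: ``It is sufficient to set $p:=1$ in Theorem 6.5 and recall that ${^{(1)}}E=E_{\max}$ and ${^{(1)}}F=F_{\max}$ for all normed spaces $E,F$ (Example \ref{ex1}).''

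\begin{proof}
It is sufficient to put $p:=1$ in Theorem~6.5 and to recall that, by definition (see Example~\ref{ex1}), we have ${^{(1)}}E=E_{\max}$ and ${^{(1)}}F=F_{\max}$ for all normed spaces $E,F$, as well as ${^{(1)}}(E\ot_{pr}F)=(E\ot_{pr}F)_{\max}$. Then the completely isometric isomorphism $I$ furnished by Theorem~6.5 is exactly the asserted one between $E_{\max}\ot_{pop}F_{\max}$ and $(E\ot_{pr}F)_{\max}$, acting as the identity on the common underlying linear space. Likewise, the corollary stated in Theorem~6.5, namely that the continuous extension $\widehat I$ is again a completely isometric isomorphism (Proposition~\ref{context}), reads in the present notation as $E_{\max}\widehat\ot_{pop}F_{\max}\simeq (E\widehat\ot_{pr}F)_{\max}$, where $(E\widehat\ot_{pr}F)_{\max}$ is ${^{(1)}}$ applied to the completed classical projective tensor product, equivalently the completion of the $PQ$--space $E\ot_{pr}F$ from Example~\ref{ex8}.
\end{proof}
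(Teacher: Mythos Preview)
Your proof is correct and follows exactly the paper's approach: the paper derives the corollary simply by the phrase ``Setting in this theorem $p:=1$, we obtain'' applied to Theorem~6.5, using the identification ${^{(1)}}E=E_{\max}$ from Example~\ref{ex1}. Your write-up just unpacks this one-line specialization in slightly more detail.
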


As another particular case, we see that for for a Hilbert space $H$ we have \\ ${^{(p)}}H 
\widehat\ot_{pop}{^{(p)}}H={^{(p)}}{\cal N}(H)$, where ${\cal N}(H)$ is the Banach space of trace 
class operators on $H$. 

By virtue of Grothendieck Theorem, mentioned in Introduction, we may say that in the case of the 
classical projective tensor product of normed spaces, the especially nice tensor factors are 
$L_1$--spaces. Now we would like to show that the same is true for the projective tensor product of 
$PG$--spaces. At first we need some preparation. 

\begin{proposition}
 Let $(X,\mu),(Y,\nu)$ be measure spaces, and $E,F$ be $PQ$--spaces, $p\in[1,\ii]$. Then the bioperator 
 $\rr:L_p(X,E)\times L_p(Y,F):L_p(X\times Y,E_{pop}F):(\bar x,\bar 
y)\mt\bar z;\bar z(s,t):=x(s)\ot y(t) $ is completely contractive. 
\end{proposition}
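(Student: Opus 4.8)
The plan is to reduce the statement to the already-established completely contractive bioperators of Section~4 by carefully tracking amplifications through the isometries $\al$ of Example~4.1. First I would recall that, by Example~4.1, the amplification $\cK(L_p(Z,H))$ is isometrically identified, via the map $\al$, with $L_p(Z,\cK H)$ for any measure space $Z$ and $PQ$--space $H$; in particular this applies to $Z=X$, $Z=Y$ and $Z=X\times Y$. So to estimate $\|\rr_\ii(u,v)\|$ for $u\in\cK(L_p(X,E))$, $v\in\cK(L_p(Y,F))$, it suffices to work with $\al(u)\in L_p(X,\cK E)$ and the analogous element for $v$, and to compute the corresponding element of $L_p(X\times Y,\cK(E\ot_{pop}F))$.

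The key computation is pointwise. Writing $\om:=\al(u)\in L_p(X,\cK E)$ and $\sigma:=\al(v)\in L_p(Y,\cK F)$, one checks on elementary tensors that the element of $L_p(X\times Y,\cK(E\ot_{pop}F))$ corresponding to $\rr_\ii(u,v)$ is the function $(s,t)\mt\om(s)\di\sigma(t)$, where `$\di$' is the extended operation on amplifications introduced in Section~5 (so $ax\di by=(a\di b)(x\ot y)$). Granting this, (5.5) gives the pointwise bound $\|\om(s)\di\sigma(t)\|_{pop}\le\|\om(s)\|\,\|\sigma(t)\|$ in $\cK(E\ot_{pop}F)$. Then I would estimate the $L_p(X\times Y)$--norm directly: by Fubini/Tonelli,
\[
\left(\int_{X\times Y}\|\om(s)\di\sigma(t)\|_{pop}^p\,d\mu(s)\,d\nu(t)\right)^{1/p}
\le\left(\int_X\|\om(s)\|^p d\mu(s)\right)^{1/p}\left(\int_Y\|\sigma(t)\|^p d\nu(t)\right)^{1/p},
\]
which is exactly $\|u\|\,\|v\|$ since $\al$ is an isometry. (For $p=\ii$ the integrals are replaced by essential suprema and the inequality is immediate.) This yields $\|\rr_\ii(u,v)\|\le\|u\|\,\|v\|$, i.e. complete contractivity.

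The main obstacle is the bookkeeping in the key computation: one must set up the right commutative diagram, analogous to those in Propositions~4.5 and~4.6, connecting $\cK(L_p(X,E))\times\cK(L_p(Y,F))$ to $L_p(X,\cK E)\times L_p(Y,\cK F)$ via $\al\times\al$, and connecting $\cK(L_p(X\times Y,E\ot_{pop}F))$ to $L_p(X\times Y,\cK(E\ot_{pop}F))$ via $\al$, with the bottom arrow the pointwise `$\di$'-multiplication bioperator $(\om,\sigma)\mt\big((s,t)\mt\om(s)\di\sigma(t)\big)$. Verifying commutativity reduces, as always, to checking on elementary tensors $ax\in\cK(L_p(X,E))$ and $by\in\cK(L_p(Y,F))$, where both sides produce the function $(s,t)\mt (a\di b)\big(x(s)\ot y(t)\big)$; this is routine but must be done with care because the $L_p$ variables and the $\cK$-variables interact. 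Once the diagram commutes and the pointwise estimate (5.5) is in hand, the measure-theoretic step is the elementary inequality $\|fg\|_{L_p(X\times Y)}\le\|f\|_{L_p(X)}\|g\|_{L_p(Y)}$ for $f\ge 0$ on $X$, $g\ge 0$ on $Y$, and the proof is complete.
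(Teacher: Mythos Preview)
Your proposal is correct and follows essentially the same approach as the paper: set up the commutative square with the isometries $\al$ on the vertical arrows, the pointwise $\di$-multiplication bioperator ${\cal S}:(\om,\sigma)\mt\big((s,t)\mt\om(s)\di\sigma(t)\big)$ on the bottom, check commutativity on elementary tensors, and then use (5.5) together with Fubini to show ${\cal S}$ is contractive. The paper's proof is slightly terser but identical in substance.
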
 

(Here and thereafter, speaking about $L_p(X,\cd)$ and $L_p(Y,\cd)$, we mean $X$ and $Y$ with the 
given measures, and speaking about $L_p(X\times Y,\cd)$, we consider $X\times Y$ with the cartesian 
product of these measures). 

\begin{proof}
Consider the diagram 
\[
 \xymatrix @C+20pt{\cK(L_p(X,E))\times\cK(L_p(Y,F)) \ar[r]^{\rr_\ii}\ar[d]_{\al_X\times\al_Y} 
& \cK(L_p(X\times Y,E\ot_{pop}F)) \ar[d]^{\al_{X\times Y}} \\
L_p(X,\cK E)\times L_p(Y,\cK F) \ar[r]^{{\cal S}} & L_p(X\times Y,\cK E_{pop}F) }, 
\]
where ${\cal S}$ takes a pair of vector--functions $(\bar u(s),\bar v(t))$ to the vector--function 
$\bar w(s,t):=\bar u(s)\di\bar v(t); s\in X, t\in Y$, and $\al_X$ etc. are the respective 
specializations of $\al$ from Example 4.1. The diagram is evidently commutative, and $\al$ is an 
isometry. Therefore it suffices to show that ${\cal S}$ is contractive. Indeed, with the help of 
the estimate (5.5), we have 
$$
\|\bar w\|=\left(\int_{X\times 
Y}\|\bar u(s)\di\bar v(t\|^pd(s,t)\right)^{\frac{1}{p}}\le
\left(\int_{X\times Y}\|\bar u(s)\|^p\|\bar 
v(t)\|^p)d(s,t)\right)^{\frac{1}{p}}=\|\bar u\|\|\bar v\|.
$$ 
 \end{proof}
%
%

\begin{theorem}
Let $(X,\mu),(Y,\nu)$ be measure spaces, $E,F$ $PQ$--spaces. Then there exists the complete 
isometry $I: L_1(X,E)\ot_{pop}L_1(Y,F)\to L_1(X\times Y,E\ot_{pop}F)$, well defined by $\bar 
x\ot\bar y\mt \bar z$, where $\bar z(s,t):=\bar x(s)\ot\bar y(t)$. 
\end{theorem}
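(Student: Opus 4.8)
The plan is to establish the two inequalities separately. The previous proposition already gives a completely contractive bioperator $L_1(X,E)\times L_1(Y,F)\to L_1(X\times Y,E\ot_{pop}F)$; by the universal property of $\ot_{pop}$ (Theorem 5.2), this yields a completely contractive linear operator $I: L_1(X,E)\ot_{pop}L_1(Y,F)\to L_1(X\times Y,E\ot_{pop}F)$ acting as $\bar x\ot\bar y\mt\bar z$. So half the work — the estimate $\|I_\ii(\Phi)\|\le\|\Phi\|_{pop}$ — is done, and it remains to produce the reverse estimate $\|\Phi\|_{pop}\le\|I_\ii(\Phi)\|$ for every $\Phi\in\cK(L_1(X,E)\ot L_1(Y,F))$, together with the (automatic, once $I$ is shown isometric) surjectivity making $I$ an isometric isomorphism.

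For the reverse estimate I would argue as follows. Fix $\Phi$ and $\e>0$. Using the isometry $\al_{X\times Y}:\cK(L_1(X\times Y,E\ot_{pop}F))\to L_1(X\times Y,\cK(E\ot_{pop}F))$ from Example 4.1, I identify $I_\ii(\Phi)$ with a function $W(s,t)\in\cK(E\ot_{pop}F)$ with $\int_{X\times Y}\|W(s,t)\|_{pop}\,d(s,t)=\|I_\ii(\Phi)\|$. The idea is to discretize: approximate $W$ in $L_1$ by a finitely-valued function, i.e. find a measurable partition $X\times Y=\bigsqcup_j A_j$ and elements $W_j\in\cK(E\ot_{pop}F)$ with $\sum_j\mu\!\times\!\nu(A_j)\|W_j\|_{pop}<\|I_\ii(\Phi)\|+\e$ and $\sum_j\|W-W_j\chi_{A_j}\|$ small. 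Ideally the partition $A_j$ can be taken to be a product of a partition of $X$ and one of $Y$ (or refined to such), so that each characteristic function $\chi_{A_j}$ splits as $\chi_{B_j}\ot\chi_{C_j}$ on $X\times Y$. Then expand each $W_j$ using a representation realizing $\|W_j\|_{pop}$ to within $\e$, as in (5.2): $W_j=\sum_k a_{jk}\cd(u_{jk}\di v_{jk})\cd b_{jk}$. This expresses (an approximation of) $\Phi$ through a sum of simple tensors of the form $\bigl(\mu(B_j)^{1/2}\chi_{B_j}u_{jk}\bigr)\di\bigl(\nu(C_j)^{1/2}\chi_{C_j}v_{jk}\bigr)$ in $L_1(X,E)\ot_{pop}L_1(Y,F)$, with appropriate $\bb$-module factors pulled through by (5.1). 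Estimating $\|\chi_{B_j}u_{jk}\|$ in $\cK(L_1(X,E))$ via the isometry $\al_X$ (it equals $\mu(B_j)\|u_{jk}\|$ — here $p=1$ makes the integral collapse to a product), and using the triangle inequality in $\|\cd\|_{pop}$ plus (5.5), I should land on $\|\Phi\|_{pop}\le\|I_\ii(\Phi)\|+O(\e)$.

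The main obstacle is the discretization step: $W$ is a genuine $L_1$ function, its values lie in an infinite-dimensional space, and there is no reason the level sets should respect the product structure of $X\times Y$. The standard remedy — since simple functions are dense in $L_1(X\times Y,\cdot)$ and simple functions over a product $\sigma$-algebra are $L_1$-approximable by functions measurable with respect to a finite product of finite sub-algebras — should work, but care is needed to keep the $\cK(E\ot_{pop}F)$-valued data finitely supported (so that all the amplifications live on a common finite-rank projection) and to control the error propagating through the non-completed $\ot_{pop}$ norm. A clean way to organize this is to first prove the statement for $E,F$ finite-dimensional and $X,Y$ having finite partitions, then pass to the general case by a double approximation (approximate $\bar x,\bar y$ by simple functions with finitely-many values, and those values by elements of finite-dimensional subspaces), invoking continuity of $I$ and density of simple tensors. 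I expect that once the product-partition approximation is set up carefully, the algebraic manipulations with $\di$, (5.1), (5.5) and the isometries $\al_X,\al_Y,\al_{X\times Y}$ are routine, so I would state them briefly and spend the detail budget on the approximation.
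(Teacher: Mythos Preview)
Your main plan contains a circularity. Discretizing $W=\al_{X\times Y}(I_\ii(\Phi))$ to a simple function $W'$ on product rectangles and lifting back produces some $\Phi'$ in the domain with $I_\ii(\Phi')\approx I_\ii(\Phi)$, and your manipulations with (5.1), (5.5) and the isometries $\al_X,\al_Y$ will indeed yield $\|\Phi'\|_{pop}\lesssim\|W'\|\approx\|I_\ii(\Phi)\|$. But this bounds $\|\Phi'\|_{pop}$, not $\|\Phi\|_{pop}$. To get back to $\Phi$ you would need $\|\Phi-\Phi'\|_{pop}$ small; all you have is that $\|I_\ii(\Phi-\Phi')\|$ is small, and since you only know $I_\ii$ is contractive (not yet isometric --- that is precisely what you are proving), the inequality goes the wrong way. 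The ``discretization obstacle'' you flag is therefore not merely a technicality about product $\sigma$-algebras: approximating on the range side cannot close the argument.

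Your fallback at the end --- approximate on the \emph{domain} side, replacing each $\bar x,\bar y$ by simple functions --- is the correct move, and it is exactly what the paper does. Every $\Phi\in\cK(L_1(X,E)\ot L_1(Y,F))$ is already a \emph{finite} sum $\sum_k a_k(\bar x_k\ot\bar y_k)$; using $\|aw\|\le\|a\|_1\|w\|$ (Proposition~2.6) and $\|x\ot y\|\le\|x\|\|y\|$ (inequality~(5.6)), approximating each $\bar x_k,\bar y_k$ by simple functions gives a $\Phi'$ that is close to $\Phi$ \emph{in the $\|\cd\|_{pop}$ norm directly}, with no appeal to properties of $I_\ii$. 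For such $\Phi'$ the image is automatically supported on finitely many disjoint rectangles $X_k\times Y_l$, and your lift-and-estimate algebra (write each value $a_{kl}(x_k\ot y_l)$ via (5.2), push the characteristic functions through via $B_\ii, B'_\ii$, sum) goes through cleanly. No reduction to finite-dimensional $E,F$ is needed.

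A minor point: the theorem asserts only that $I$ is a complete isometry, not a completely isometric isomorphism; it is not surjective in general (a function supported on a non-rectangular set in $X\times Y$ is not in the range). Surjectivity onto the completed target is obtained only after completion, in the next theorem.
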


\begin{proof}
 The bioperator $\rr$ from Proposition 6.7, being considered for $p=1$, gives rise to 
the completely contractive operator $R$, acting exactly as $I$ in the formulation. Therefore our 
task is to show that for every $U\in\cK[L_1(X,E)\ot_{pop} L_1(Y,F)]$ we have 
$\|U\|_{pop}\le\|R_\ii(U)\|$. 

Obviously, $L_1(X,E)$ contains the dense subspace $L_1^0(X,E)$ consisting of 
 vector-functions of the form $\sum_k\chi_k x_k$, where $\chi_k$ are characteristic functions of 
 pairwise disjoint
subsets of $X$ with finite measure, and $x_k\in E$. Similarly, $L_1(Y,F)$ contains the subspace 
$L_1^0(Y,F)$ with analogues properties. Therefore, thanks to the estimate (5.6) and the last 
estimate in Proposition 2.6, it is sufficient to prove that $R_\ii$ does not decrease norms of sums 
of elementary tensors of the form $a({\bf x}\ot{\bf y})$, where $a\in\cK,{\bf x}\in L_1^0(X,E),{\bf 
y}\in L_1^0(Y,F)$. 

Let $U$ be such a sum. It is not difficult to show that it can be represented as 
\[
U=\sum_{k=1}^n\sum_{l=1}^ma_{kl}(\bar x_k\ot\bar y_l),
\] 
with $\bar x_k(s):=\chi_k(s)x_k; s\in X$, where 
$\chi_k(s)$ are characteristic 
functions of pairwise disjoint subsets $X_k;\mu(X_k)<\ii, x_k\in E$, and $\bar 
y_l(t):=\chi'_l(t)y_l; t\in Y$, where 
$\chi'_l(t)$ are characteristic functions of 
pairwise disjoint subsets $Y_l;\nu(Y_l)<\ii, y_l\in F$. 

We obviously have $R_\ii(U)=\sum_{k,l}a_{kl}[\chi_k(s)\chi'_l(t)x_k\ot y_l]$. Therefore, by the 
recipe of Example 4.1, $\|R_\ii(U)\|$ is the norm of the function \\ 
$\sum_{k,l}\chi_k(s)\chi'_l(t)[a_{kl}(x_k\ot y_l)]\in L_1(X\times Y),\cK(E\ot_{pop}F)$. Since we 
are in a space $L_1(\cd)$, this implies that 
\begin{align} 
\|R_\ii(U)\|=\sum_{k=1}^n\sum_{l=1}^m\mu(X_k)\nu(Y_l)\|a_{kl}(x_k\ot y_l)\|_{pop}.
\end{align}
Fix, for a time, a pair $k,l$, and also $\e>0$. By (5.3), we can represent $a_{kl}(x_k\ot 
y_l)\in\cK(E\ot_{pop}F)$ in the form $\sum_ib_{kl}^i\cd(u_{kl}^i\di v_{kl}^i)\cd c_{kl}^i$, where 
$b_{kl}^i,c_{kl}^i\in\bb,u_{kl}^i\in\cK E,v_{kl}^i\in\cK F$, such that 
\begin{align} 
\sum_i\|b_{kl}^i\|\|u_{kl}^i\|\|v_{kl}^i\|\|c_{kl}^i\|<\|a_{kl}(x_k\ot y_l)\|_{pop}+\e. 
\end{align}

(Here the number of summands, indexed by $i$, of course, depends on the pair $k,l$). 

Now for every $u\in\cK E$ we denote, for brevity, by $\bar u\in\cK L_1(X,E)$ the element 
$B_\ii(u)\in\cK(L_1(X,E))$, where $B:E\to L_1(X,E)$ takes $x$ to the vector-function $\bar 
x(s):=\chi_k(s)x$. Similarly, for $v\in L_1(Y,F)$ we set $\bar v:=B'_\ii(v)\in\cK(L_1(Y,F))$, where 
$B':F\to L_1(Y,F):y\mt\bar y:=\chi'(t)y$. Evidently we have 

\begin{align}
\|\bar u\|=\mu(X_k)\|u\| \q {\rm and} \q \|\bar v\|=\nu(Y_k)\|v\|. 
\end{align}


Further, since for $x\in E,y\in F$ we have $R(\bar x\ot\bar y)=\chi_k(s)\chi'(t)x\ot y$, it easily 
follows that for $u\in\cK E,v\in\cK F$ we have $R_\ii(\bar u\di\bar v)=\chi_k(s)\chi'(t)u\di v$. 
Consequently, taking in account (6.3) and (6.2), we have 
$$
R_\ii\left(\sum_ib_{kl}^i\cd(\bar u_{kl}^i\di\bar v_{kl}^i)\cd c_{kl}^i\right)=\chi_k(s)\chi'(t) 
\sum_ib_{kl}^i\cd(u_{kl}^i\di v_{kl}^i)\cd c_{kl}^i=
$$
$$
\chi_k(s)\chi'_l(t)a_{kl}(x_k\ot y_l)=R_\ii[a_{kl}(\bar x_k\ot\bar y_l)]. 
$$
But $R$ is obviously injective and, of course, the same is true for $R_\ii$. It follows that 
$\sum_ib_{kl}^i\cd(\bar u_{kl}^i\di\bar v_{kl}^i)\cd c_{kl}^i=a_{kl}(\bar x_k\ot\bar y_l)$, and 
consequently $U=\sum_{k,l}[\sum_ib_{kl}^i\cd(\bar u_{kl}^i\di\bar v_{kl}^i)\cd c_{kl}^i]$. This, 
with the help of (6.3), implies that 
$$
\|U\|_{pop}\le\sum_{k,l}\sum_i\|b_{kl}^i\|\|\bar u_{kl}^i\|\|\bar 
v_{kl}^i\|\|c_{kl}^i\|=\sum_{k,l}\mu(X_k)\nu(Y_l)\sum_i\|b_{kl}^i\|\|u_{kl}^i\|\|v_{kl}^i\|\|c_{kl}^i\|.
$$ 
From this, by virtue of (6.2), we obtain that 
$$
\|U\|_{pop}\le\sum_{k,l}\mu(X_k)\nu(Y_l)(\|a_{kl}(x_k\ot y_l)\|_{pop}+\e).
$$
 Since $\e$ is arbitrary, it follows that 
$\|U\|_{pop}\le\sum_{k,l}\mu(X_k)\nu(Y_l)\|a_{kl}(x_k\ot y_l)\|_{pop}$, that is, by (6.1), 
$\|U\|_{pop}\le\|R_\ii(U)\|$. 
\end{proof}

Since $L_1(X,E)\ot_{pop}L_1(Y,F)$ is dense in $L_1(X,E)\widehat\ot_{pop}L_1(Y,F)$, and the image of 
$I$ is obviously dense in $L_1(X\times Y,E\widehat\ot_{pop}F)$, we have, as an immediate corollary, 

\begin{theorem}
 Let $(X,\mu),(Y,\nu),E,F$ be as before. Then there exists a complete 
isometric isomorphism $I: L_1(X,E)\widehat\ot_{pop}L_1(Y,F)\to L_1(X\times Y,E\widehat\ot_{pop}F)$, 
well defined by $\bar x\ot\bar y\mt \bar z$, where $\bar z(s,t):=\bar x(s)\ot\bar y(t)$ 
\end{theorem}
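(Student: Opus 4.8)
The plan is to obtain Theorem 6.9 as a completion of the non-completed statement in Theorem 6.8, using exactly the mechanism already described at the end of Section 5 and in Proposition~\ref{context}. First I would recall that $I:L_1(X,E)\ot_{pop}L_1(Y,F)\to L_1(X\times Y,E\ot_{pop}F)$ from Theorem 6.8 is a complete isometry onto its image, and that by Proposition~\ref{context} its continuous extension $\overline I$ to the completions of the source and target $PQ$--spaces is again a completely isometric isomorphism. So $\overline I$ is a completely isometric isomorphism of $\overline{L_1(X,E)\ot_{pop}L_1(Y,F)}$ onto $\overline{L_1(X\times Y,E\ot_{pop}F)}$.

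Next I would identify these two completions with the spaces named in the theorem. By definition of $\widehat\ot_{pop}$ (end of Section 5), $\overline{L_1(X,E)\ot_{pop}L_1(Y,F)}$ is exactly $L_1(X,E)\widehat\ot_{pop}L_1(Y,F)$, so the left-hand side matches on the nose. For the right-hand side I must check that $\overline{L_1(X\times Y,E\ot_{pop}F)}$ is completely isometrically the $PQ$--space $L_1(X\times Y,E\widehat\ot_{pop}F)$ of Example 4.1 built over the complete $PQ$--space $E\widehat\ot_{pop}F$. On underlying normed spaces this is the classical fact that $L_1(Z,V)$ is complete when $V$ is complete and that $\overline{L_1(Z,V_0)}=L_1(Z,\overline{V_0})$; on the amplification level one uses that the norm of Example 4.1 is transported by the isometry $\al:\cK(L_1(Z,\cdot))\to L_1(Z,\cK(\cdot))$, and that $\al$ is compatible with completions, so the identification of underlying spaces lifts to a completely isometric identification of amplifications. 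This gives a completely isometric isomorphism $L_1(X,E)\widehat\ot_{pop}L_1(Y,F)\to L_1(X\times Y,E\widehat\ot_{pop}F)$.

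Finally I would check that this isomorphism is the map $\overline I$ described by $\bar x\ot\bar y\mt\bar z$ with $\bar z(s,t):=\bar x(s)\ot\bar y(t)$. Since $L_1(X,E)\ot_{pop}L_1(Y,F)$ is dense in its completion and the image of $I$ (hence of $\overline I$) is dense in $L_1(X\times Y,E\widehat\ot_{pop}F)$ — because elements $\bar z(s,t)=\bar x(s)\ot\bar y(t)$ with $\bar x,\bar y$ simple generate a dense subspace — the extension $\overline I$ is the unique continuous map agreeing with $I$ on $L_1(X,E)\ot_{pop}L_1(Y,F)$, and its defining formula on elementary tensors is inherited from that of $I$. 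This completes the proof.

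The only step with any content is the identification $\overline{L_1(X\times Y,E\ot_{pop}F)}\simeq L_1(X\times Y,E\widehat\ot_{pop}F)$: one must verify that completing the vector-valued $L_1$-space of a $PQ$--space, as a $PQ$--space, amounts to replacing the coefficient $PQ$--space by its completion, i.e. that the construction of Example 4.1 commutes with completion. This is routine given that $\al$ is an isometric $\bb$-bimodule embedding and that completions of normed spaces and of $L_1(Z,\cdot)$ behave as expected, so I do not expect a genuine obstacle here; everything else is a direct appeal to Proposition~\ref{context} and the density remark already made in the paragraph preceding the theorem.
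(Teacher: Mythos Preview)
Your proposal is correct and follows the same route as the paper: the paper's entire argument is the one-sentence remark immediately preceding the theorem, namely that $L_1(X,E)\ot_{pop}L_1(Y,F)$ is dense in its completion and the image of $I$ is dense in $L_1(X\times Y,E\widehat\ot_{pop}F)$, so Theorem 6.8 passes to completions. You spell out more of the mechanics (invoking Proposition~\ref{context} and the identification $\overline{L_1(X\times Y,E\ot_{pop}F)}\simeq L_1(X\times Y,E\widehat\ot_{pop}F)$), but the substance is identical; just be careful that Proposition~\ref{context} is stated for completely isometric \emph{isomorphisms}, so you should apply it to $I$ viewed as a map onto its image and then use density, which you effectively do in your final paragraph.
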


Combining Theorem 6.8 or 6.9 with the previous results in this section,
one can obtain various corollaries. For example, taking the one-point $Y$ and using Corollary 6.2, 
we get the assertion that can be considered as a `$PQ$--version' of the Grothendieck Theorem in its 
usual formulation: 

\begin{corollary} 
Let $p\in[1,\ii], X$ be a measure space, and $F$ be a complete $p$--convex $PQ$--space. Then we 
have, up to a completely isometric isomorphism, $L_1(X,{^{(p)}}\co)\widehat\ot_{pop}F=L_1(X,F)$. 
\end{corollary}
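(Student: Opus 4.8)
The plan is to derive Corollary~6.11 directly by specializing the two isomorphisms already established in this section. First I would take $Y$ to be a one-point measure space equipped with the unit point mass $\nu$, so that $L_1(Y,F)$ is completely isometrically $F$ itself (the amplification $\cK(L_1(Y,F))$ is just $\cK F$ with its given norm, via the recipe of Example~4.1 on a single atom), and $L_1(X\times Y,G)$ is completely isometrically $L_1(X,G)$ for any $PQ$--space $G$. Plugging these identifications into Theorem~6.10 with the first factor taken to be $L_1(X,{^{(p)}}\co)$ and the second factor $F$, I obtain a completely isometric isomorphism
\[
L_1(X,{^{(p)}}\co)\,\widehat\ot_{pop}\,F\;\simeq\;L_1\bigl(X,\;{^{(p)}}\co\,\widehat\ot_{pop}\,F\bigr).
\]

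Next I would invoke Corollary~6.2, which applies precisely because $F$ is assumed $p$--convex: it gives a completely isometric isomorphism ${^{(p)}}\co\ot_{pop}F\simeq F$ sending $\lm\ot x\mt\lm x$, and then Proposition~5.3 (the completed version) upgrades this to ${^{(p)}}\co\,\widehat\ot_{pop}\,F\simeq F$, using that $F$ is already complete. Substituting this into the right-hand side above and composing the completely isometric isomorphisms yields $L_1(X,{^{(p)}}\co)\,\widehat\ot_{pop}\,F\simeq L_1(X,F)$, which is the claim.

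The only point that needs a little care is checking that the identifications for a one-point $Y$ are genuinely completely isometric at the level of amplifications, not merely isometric on the underlying normed spaces; but this is immediate from Example~4.1, since over a single atom of mass $1$ the operator $\al:\cK(L_1(Y,F))\to L_1(Y,\cK F)$ is just the identity $\cK F\to\cK F$, so the constructed $PQ$--norm on $L_1(Y,F)$ coincides with the given $PQ$--norm on $F$, and similarly $L_1(X\times Y,G)$ and $L_1(X,G)$ have literally the same amplification norm. I do not anticipate any real obstacle here; the substance of the result lives entirely in Theorems~6.8--6.10 and Corollary~6.2, and this final corollary is purely a bookkeeping composition of maps, so the proof should be two or three sentences long. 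One should perhaps remark explicitly that all the isomorphisms in play act as the identity (or the obvious map $\lm\ot x\mt\lm x$) on the common underlying linear spaces, so that the composite map is canonical and well defined.
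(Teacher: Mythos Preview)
Your proposal is correct and matches the paper's own argument essentially verbatim: the paper derives the corollary by ``taking the one-point $Y$ and using Corollary 6.2'' in Theorem 6.9, which is precisely your specialization-then-substitution. The paper also remarks in passing that the same result follows from the easier Proposition 4.4 combined with Corollary 6.2, bypassing Theorem 6.8/6.9 entirely; you might mention this alternative route as well.
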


Note that the same assertion could be obtained without using Theorem 6.8, by combining easier 
Proposition 4.4 with Corollary 6.2. 

Also, combining Theorem 6.9 (or Proposition 4.4) with Corollary 6.3, one can get the completely 
isometric isomorphism $L_1(X,{^{(p)}}\co)\widehat\ot_{pop}L_1(Y,{{^{(q)}}}\co)\simeq L_1(X\times 
Y,{^{(r)}}\co)$ with $r:=\max\{p,q\}$, and so on. 


\section{Quantum duality and adjoint associativity}
  
We proceed to show that the projective tensor product of $PQ$--spaces satisfies the law of adjoint 
associativity ( = exponential law), connecting it with the proper $P$--quantization of the space of 
completely bounded operators. Such a $P$--quantization extends what was well known in the context 
of quantum (operator) spaces. In that context the relevant construction cropped up in 
~\cite[p.140]{ro}, but was fully realized and put in proper place independently and simultaneously 
in~\cite{blp} and~\cite{er2}. In the matrix-free language, again only for $Q$--spaces, it was 
presented in~\cite[8.1.8.]{heb2}  Here we give all needed details for general $PQ$--spaces. 

Let $E,G$ be two $PQ$--spaces. Our task is to endow the normed space $\CB(E,G)$ with a 
$P$--quantization. For this aim we consider the {\it evaluation bioperator} $\mathcal{E}\colon 
E\times\CB(E,G)\to G\colon (x,\va)\mapsto\va(x)$ and its  amplification $\mathcal{E}_\ii\colon \cK 
E\times\cK[\CB(E,G)]\to\cK G$; the latter, as we remember, is well defined by 
$(ax,b\va)\mapsto(a\di b)\va(x)$. Set, for $\Phi\in\cK[\CB(E,G)]$, 

\begin{align}
\|\Phi\|:=\sup\{\|\mathcal{E}_\ii(u,\Phi)\|; u\in\cK E;\; \|u\|\le1\}.
\end{align}

(We see that such a definition closely imitates the definition of the `classical' operator norm: 
indeed, $\|\va\|$ is $\sup\{\|\mathcal{E}(x,\va)\|; x\in E;\; \|x\|\le1\}$, where $\mathcal{E}:
E\times\bb(E,G)\to G$ is the obvious `classical' evaluation operator).

\begin{proposition}
 The function $\Phi\mt\|\Phi\|$ is a PQ--norm on $\CB(E,G)$, and the resulting 
$PQ$--space is a $P$--quantization of $\CB(E,G)$ as of a normed space. 
\end{proposition}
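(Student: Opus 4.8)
The plan is to verify three things in turn: that the function $\Phi\mapsto\|\Phi\|$ from (7.1) is a genuine semi-norm, that it makes $\cK[\CB(E,G)]$ a contractive $\bb$-bimodule (hence a $PQ$-semi-norm on $\CB(E,G)$), and finally that the underlying normed space it induces is exactly $\CB(E,G)$ with its $\|\cd\|_{cb}$-norm, which in particular forces the semi-norm to be a norm via Proposition 2.4.

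First I would check that $\|\cd\|$ is a semi-norm. Positive homogeneity and subadditivity follow immediately from the corresponding properties of $\|\cd\|$ on $\cK G$, since $\mathcal{E}_\ii(u,\cd)$ is linear in its second slot (this uses that amplification of a bioperator is bilinear, which one sees on elementary tensors). Finiteness needs a word: for $\Phi=\sum_k b_k\va_k$ one has $\mathcal{E}_\ii(u,\Phi)=\sum_k (b_k\cd(\va_k)_\ii)(u)$ — more precisely, writing $u=\sum_j a_jx_j$, $\mathcal{E}_\ii(u,\Phi)=\sum_{j,k}(a_j\di b_k)\va_k(x_j)$ — so $\|\mathcal{E}_\ii(u,\Phi)\|\le\sum_k\|b_k\|\,\|(\va_k)_\ii(u)\|\le\big(\sum_k\|b_k\|\,\|\va_k\|_{cb}\big)\|u\|$, giving $\|\Phi\|<\ii$.

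Next I would check the bimodule contractivity, i.e. $\|S\cd\Phi\cd T\|\le\|S\|\,\|\Phi\|\,\|T\|$ for $S,T\in\bb$. The key identity, checked on elementary tensors, is $\mathcal{E}_\ii(u,S\cd\Phi\cd T)=(\id\di S)\cd\mathcal{E}_\ii(u\cd(\text{something}),\Phi)\cd(\id\di T)$ — let me be careful here, since the outer action on $\cK[\CB(E,G)]$ twists the second Hilbert-tensor leg. Concretely, for $u=ax$, $\Phi=b\va$: $\mathcal{E}_\ii(ax,(Sb T)\va)=(a\di SbT)\va(x)=(\id\di S)\cd\big((a\di b)\va(x)\big)\cd(\id\di T)=(\id\di S)\cd\mathcal{E}_\ii(ax,b\va)\cd(\id\di T)$. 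By bilinearity this extends to all $u,\Phi$, so $\|\mathcal{E}_\ii(u,S\cd\Phi\cd T)\|=\|(\id\di S)\cd\mathcal{E}_\ii(u,\Phi)\cd(\id\di T)\|\le\|\id\di S\|\,\|\mathcal{E}_\ii(u,\Phi)\|\,\|\id\di T\|=\|S\|\,\|\mathcal{E}_\ii(u,\Phi)\|\,\|T\|$ using (3.3), the contractivity of $\cK G$ as a $\bb$-bimodule, and $\|\id\di S\|=\|S\|$. Taking the sup over $\|u\|\le1$ gives the claim. Thus $\|\cd\|$ is a $PQ$-semi-norm.

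Finally I would identify the underlying semi-normed space. Pick a rank-one $Q\in\cK$ with $\|Q\|=1$; then for $\va\in\CB(E,G)$ the underlying semi-norm is $\|Q\va\|=\sup\{\|\mathcal{E}_\ii(u,Q\va)\|:\|u\|\le1\}$. Writing $u=\sum_j a_jx_j$, one has $\mathcal{E}_\ii(u,Q\va)=\sum_j(a_j\di Q)\va(x_j)$. Using Proposition 4.3(ii), $\|a\di Q\|\cdots$ — actually the cleaner route is: by Proposition 4.3(ii) applied with $Q$ on the right, $\|u\di Q\|=\|u\|$ in $\cK G$ (after transporting), and one checks $\mathcal{E}_\ii(u,Q\va)=\va_\ii(u)\di Q$ up to the fixed unitary identifications, so $\|\mathcal{E}_\ii(u,Q\va)\|=\|\va_\ii(u)\di Q\|=\|\va_\ii(u)\|$ by Proposition 4.3(ii). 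Hence $\|Q\va\|=\sup\{\|\va_\ii(u)\|:\|u\|\le1\}=\|\va_\ii\|=\|\va\|_{cb}$, which is independent of $Q$ as required, and is a genuine norm on $\CB(E,G)$. Proposition 2.4 then upgrades the $PQ$-semi-norm on $\cK[\CB(E,G)]$ to a norm, completing the proof.

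The main obstacle I expect is bookkeeping the twist introduced by the fixed unitary $\iota\colon L\hil L\to L$: the $\bb$-action on $\cK[\CB(E,G)]$ lives on the "second leg" of $\di$, and the identity $\mathcal{E}_\ii(u,Q\va)=\va_\ii(u)\di Q$ holds only after absorbing the isometry $S$ of (4.1) with $P=Q$ (when $Q$ is a norm-one rank-one projection) or, for general rank-one $Q$, after the further $\approx$-reduction in Proposition 4.3(ii). Getting these identifications right on elementary tensors — and confirming they propagate by bilinearity — is the only genuinely fiddly part; everything else is a direct transcription of the classical operator-norm argument into the amplified setting.
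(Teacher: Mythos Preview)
Your approach is essentially identical to the paper's: you establish the same key identity $\mathcal{E}_\ii(u,b\va)=\va_\ii(u)\di b$ (the paper's (7.2)), use the same bimodule identity $\mathcal{E}_\ii(u,S\cd\Phi\cd T)=(\id\di S)\cd\mathcal{E}_\ii(u,\Phi)\cd(\id\di T)$ for contractivity, and invoke Proposition~4.3(ii) plus Proposition~2.4 for the underlying-space identification. One small slip: in your finiteness estimate the bound $\|\mathcal{E}_\ii(u,b_k\va_k)\|\le\|b_k\|\,\|(\va_k)_\ii(u)\|$ should read $\|b_k\|_1$ rather than the operator norm $\|b_k\|$, since Proposition~4.3(iii) only gives $\|w\di b\|\le\|b\|_1\|w\|$ for a general $PQ$--space $G$; this does not affect the finiteness conclusion.
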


\begin{proof}
For every $b\in\cK$, $u\in\cK E$ and $\va\in\CB(E,G)$ we obviously have the equality 
\begin{align}
 {\cal E}_\ii(u,{b\va})=\va_\ii(u)\di b.
 \end{align} 

Therefore, by the Proposition 4.3(iii), we have $\|{\cal 
E}_\ii(u,{b\va})\|\le\|b\|_1\|\|\va_\ii(u)\|$. 
 It follows that the number $\|b\va\|$ is well-defined. Consequently, proceeding from elementary 
 tensors to 
 their (finite) sums, that the number $\|\Phi\|$ is well-defined for all $\Phi\in\cK\CB(E,G)$ . 

 Further, for all $a\in\cK, u\in\cK E,\Phi\in\cK[\CB(E,G)]$ we have the equalities 
$$
{\cal E}_\ii(u,a\cd\Phi)=(\id\di a)\cd{\cal E}_\ii(u,{\Phi}) \q {\rm and} \q
{\cal E}_\ii(u,\Phi\cd a)={\cal E}_\ii(u,{\Phi})\cd(\id\di a)
$$
that can be immediately checked on elementary tensors. Consequently, \\ $\|{\cal 
E}_\ii(u,a\cd\Phi)\|,\|{\cal E}_\ii(u,\Phi\cd a)\|\le\|\id\di a\|\|{\cal 
E}_\ii(u,{\Phi})\|=\|a\|\|{\cal E}_\ii(u,{\Phi})\|$. It follows that $\|a\cd 
\Phi\|\le\|a\|\|\Phi\|$, and similarly $\|\Phi\cd a\|\le\|a\|\|\Phi\|$. Therefore the introduced 
seminorm on $\cK[\CB(E,G)]$ is a $PQ$--seminorm on $\cK[\CB(E,G)]$. 

Finally, take a rank one projection $P\in\cK$. Then, considering $\CB(E,G)$ as the underlying 
semi-normed space of the introduced $PQ$--space, we have, by virtue of (7.2) and Proposition 
4.3(ii), that 
$$
\|\va\|=\sup\{\|\mathcal{E}_\ii(u,P\va)\|; \|u\|\le1\}=
\sup\{\|\va_\ii(u)\di P\|; \|u\|\le1\}=
$$
$$
\sup\{\|\va_\ii(u)\|; \|u\|\le1\}=\|\va\|_{cb}.
$$
Consequently, our underlying space is just $\CB(E,G)$ with its $cb$--norm. Therefore, by 
Proposition 2.4, the seminorm on $\cK[\CB(E,G)]$ is actually a norm, and the respected $PQ$--space 
is a $P$--quantization of the given space of completely bounded operators. 
\end{proof} 


If a a given normed space $E$ is endowed with a quantization (not just a $P$--quantization), then 
there is a well known standard way to make its dual space $E^*$ again a $Q$--space.
Namely, if we identify the normed spaces $E^*$ and $\CB(E,{^{(\ii)}}\co)$ (that is, if we consider 
$\co$ with its unique quantization), then the recipe above provides the $PQ$--norm on $E^*$ which 
is in fact a $Q$--norm (see, e.g.,~\cite[Ch.8.2]{heb2}). However, if we shall consider other 
$P$--quantizations of $\co$, the normed space $\CB(E,\co)$ is not bound to be the dual of $E$. 
Actually, we already know this: by Proposition 3.2 and 3.3, for $E:={^{(p)}}\co$ the space 
$\CB(E,{^{(q)}}\co)$ is $E^*$, that is just $\co$, if, and only if $p\le q$; otherwise, it is 0. In 
the first case the respected $P$--quantization is as follows. 

\begin{proposition}
If $p\le q$, then the $PQ$--space $\CB({^{(p)}}\co,{^{(q)}}\co)$, after the identification of its 
underlying space with $\co$, is ${^{(q)}}\co$. 
\end{proposition}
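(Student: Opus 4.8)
The plan is to identify the amplification $\cK[\CB({^{(p)}}\co,{^{(q)}}\co)]$ with $\cK_q$ isometrically, which is exactly what it means for this $PQ$--space to be ${^{(q)}}\co$ after identifying its underlying space with $\co$. First I would note that, by Proposition 3.2 and 3.3, the underlying normed space of $\CB({^{(p)}}\co,{^{(q)}}\co)$ is $\co$ itself (here $p\le q$), so a completely bounded operator $\va$ is just multiplication by a scalar, and an element $\Phi\in\cK[\CB({^{(p)}}\co,{^{(q)}}\co)]$ can be written as $b\va_1$, where $\va_1$ is the identity scalar $1\in\co$ and $b\in\cK$; so the underlying linear space is canonically $\cK$, and the task reduces to computing $\|\Phi\|=\|b\va_1\|$ from the defining formula (7.1) and showing it equals $\|b\|_q$.

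Next I would use the basic formula (7.2), namely ${\cal E}_\ii(u,b\va)=\va_\ii(u)\di b$, together with the fact that for $\va=\va_1$ (multiplication by $1$) the amplification $(\va_1)_\ii:\cK({^{(p)}}\co)\to\cK({^{(q)}}\co)$ is the identity map on the common underlying linear space $\cK$ — but with the norm of $\cK_p$ on the source and $\cK_q$ on the target. Hence
\[
\|b\va_1\| = \sup\{\|a\di b\|_q;\ a\in\cK,\ \|a\|_p\le 1\}.
\]
By Proposition 4.9, $\|a\di b\|_q=\|a\|_q\|b\|_q$, and since $q\ge p$ we have $\|a\|_q\le\|a\|_p\le 1$ for the admissible $a$, which gives the upper bound $\|b\va_1\|\le\|b\|_q$. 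For the reverse inequality I would choose a rank one projection $P\in\cK$; then $\|P\|_p=1$ for every $p$, so $P$ is admissible, and $\|P\di b\|_q=\|P\|_q\|b\|_q=\|b\|_q$ by Proposition 4.9 again; hence the supremum is at least $\|b\|_q$, and equality follows.

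The main obstacle I anticipate is a bookkeeping one rather than a deep one: making fully precise the identification under which $(\va_1)_\ii$ becomes the literal identity map of $\cK$ onto itself, so that "$\sup$ over $\|a\|_p\le 1$ of $\|a\di b\|_q$" is a legitimate rewriting of (7.1). Concretely, one must check that under the canonical isomorphism $\cK({^{(p)}}\co)\cong\cK_p\ot_{pr}\co\cong\cK_p$ (and similarly for $q$), the bioperator ${\cal E}_\ii$ restricted to elements of the form $(u,b\va_1)$ is exactly $(a,b)\mapsto a\di b\in\cK_q$. This is a routine verification on elementary tensors using the definition of ${\cal E}_\ii$ and of the norm on ${^{(p)}}\co$ from Example 2.5, but it is the step that genuinely has to be spelled out; everything else is then an immediate application of Propositions 4.9 and the elementary norm facts about rank one projections in the Schatten classes.
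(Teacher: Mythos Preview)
Your proof is correct and follows essentially the same route as the paper: both reduce the computation of $\|\Phi\|$ to $\sup\{\|a\di b\|_q:\|a\|_p\le1\}$, then invoke the identity $\|a\di b\|_q=\|a\|_q\|b\|_q$ together with $\|a\|_q\le\|a\|_p$ for $p\le q$ (the paper simply declares this supremum ``obviously $1$'', whereas you spell out both bounds via a rank one projection). Note only that the result you cite as ``Proposition~4.9'' is Proposition~4.6 in the paper's numbering.
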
 

\begin{proof}
In our case every $u\in\cK\co$ has a inique presentation as $a1; a\in\cK, 1\in\co$ whereas every 
$\Phi\in\cK[\CB({^{(p)}}\co,{^{(q)}}\co)]$, after the mentioned identification, 
has a inique presentation as $b1; b\in\cK, 1\in\co$. Consequently, the bioperator $\mathcal{E}_\ii$ 
can be considered as taking $(a1,b1)$ to $(a\di b)1$.
Therefore, by virtue of Proposition 4.6, the $PQ$--norm of a given $\Phi$, presented as $b1$, is 
\[
\|\Phi\|:=\sup\{\|a\di b\|_q:a\in\cK_p\; \|a\|_p\le1\}=
\]
\[
\sup\{\|a\|_q\|b\|_q:a\in\cK_p\; \|a\|_p\le1\}=\|b\|_q\sup\{\|a\|_q:a\in\cK_p\; \|a\|_p\le1\}
\]
But, since $p\le q$, the last supremum is obviously 1.
\end{proof}

Denote the space of completely bounded bioperators from $E\times F$ into $G$ by
$\mathcal{CB}(E\times F,G)$. Obviously, it is the normed space with respect to $\|\cd\|_{cb}$.  

\begin{theorem}
 There exists the isometric isomorphism (of normed spaces) 
$I_F:\mathcal{CB}(E\times F,G)\to\CB(F,\CB(E,G))$, well defined by taking {\rm (exactly as in the 
`classical' context)} the bioperator $\rr$ to the operator $\rr^F\colon F\to\CB(E,G)\colon 
y\mapsto\rr^y$, where $\rr^y:E\to G$ takes $x$ to $\rr(x,y)$. 

To put it in more detailed form, 

(i) for every $\rr\in\mathcal{CB}(E\times F,G)$ and $y\in F$ the operator $\rr^y\colon E\to G$ is 
completely bounded. 

(ii) The operator $\rr^F\colon F\to\CB(E,G)\colon y\mapsto\rr^y$, which is well defined because of 
(i), is completely bounded with respect to the $PQ$--norm on $\CB(E,G)$, defined above. 

(iii) The operator $I_F\colon \mathcal{CB}(E\times F,G)\to\CB(F,\CB(E,G))$, which is well defined 
because of (ii), is an  isometric isomorphism. 
\end{theorem}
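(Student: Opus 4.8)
The plan is to prove the three assertions (i), (ii), (iii) in order, each time reducing the completely-bounded statement to a computation on amplifications that mimics the classical argument but tracks the operation $\di$ carefully. Throughout, I would use the three key identities already at hand: the formula $\mathcal E_\ii(u, b\va) = \va_\ii(u)\di b$ from (7.2), the bimodule compatibility $\mathcal E_\ii(u, a\cd\Phi) = (\id\di a)\cd\mathcal E_\ii(u,\Phi)$ and its right-hand analogue, and the definition (7.1) of the $PQ$-norm on $\CB(E,G)$ as a supremum over the unit ball of $\cK E$.

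For (i), fix $\rr\in\CB(E\times F,G)$ and $y\in F$; I would pick a norm-one rank-one operator $Q\in\cK$ and observe that $(\rr^y)_\ii(u) = \mathcal E_\ii\big((\id_{\cK}\di Q)^{-1}\text{-ish}\big)$... more precisely, for $u\in\cK E$ one has $(\rr^y)_\ii(u)$ expressible through $\rr_\ii(u, Qy)$ via the relation $\rr_\ii(u, Qy) = $ (something involving $\di Q$ on the $G$-side), and then Proposition \ref{pr7}(ii) shows $\|\rr_\ii(u,Qy)\| = \|(\rr^y)_\ii(u)\|$ up to the factor $\|Q\|=1$, while $\|Qy\| = \|y\|$. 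Hence $\|(\rr^y)_\ii(u)\| \le \|\rr\|_{cb}\|u\|\|y\|$, so $\rr^y$ is completely bounded with $\|\rr^y\|_{cb}\le\|\rr\|_{cb}\|y\|$. For (ii), I must bound $(\rr^F)_\ii$ as an operator into the amplification of the $PQ$-space $\CB(E,G)$. Unwinding the definition: for $v\in\cK F$, the element $(\rr^F)_\ii(v)\in\cK[\CB(E,G)]$ has, by (7.1), norm equal to $\sup\{\|\mathcal E_\ii(u, (\rr^F)_\ii(v))\| : \|u\|\le 1\}$, and the crucial identity to check on elementary tensors is $\mathcal E_\ii(u, (\rr^F)_\ii(v)) = \rr_\ii(u,v)$ (this is the $PQ$-analogue of the classical $\la x, \rr^F(y)\ra_{\text{eval}} = \rr(x,y)$, and follows because $\mathcal E_\ii(ax, b\,\rr^y) = (a\di b)\rr^y(x) = (a\di b)\rr(x,y) = \rr_\ii(ax,by)$). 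Granting this, $\|(\rr^F)_\ii(v)\| = \sup\{\|\rr_\ii(u,v)\| : \|u\|\le 1\} \le \|\rr\|_{cb}\|v\|$, which gives (ii) together with $\|\rr^F\|_{cb}\le\|\rr\|_{cb}$.

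For (iii), linearity and bijectivity of $I_F$ are purely algebraic (the classical exponential law at the level of linear/bilinear maps), so the content is that $I_F$ is isometric, i.e. $\|\rr^F\|_{cb} = \|\rr\|_{cb}$. One inequality is (ii). For the reverse, given $\rr^F\in\CB(F,\CB(E,G))$ I would estimate $\|\rr_\ii(u,v)\|$ for $u\in\cK E$, $v\in\cK F$ with $\|u\|\le 1$, $\|v\|\le 1$: by the identity from (ii), $\rr_\ii(u,v) = \mathcal E_\ii(u, (\rr^F)_\ii(v))$, and by (7.1) this has norm $\le \|(\rr^F)_\ii(v)\| \le \|\rr^F\|_{cb}\|v\| \le \|\rr^F\|_{cb}$; taking the supremum over such $u,v$ gives $\|\rr\|_{cb}\le\|\rr^F\|_{cb}$. (One should note $\|\rr\|_{cb}=\sup\{\|\rr_\ii(u,v)\|:\|u\|,\|v\|\le1\}$, which is immediate from the bilinearity of $\rr_\ii$ in the module sense.) Thus $I_F$ is a bijective isometry.

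The main obstacle is establishing the single clean identity $\mathcal E_\ii(u, (\rr^F)_\ii(v)) = \rr_\ii(u,v)$ for all $u\in\cK E$, $v\in\cK F$, together with its companion used in (i) relating $\rr_\ii(u,Qy)$ to $(\rr^y)_\ii(u)$ via $\di Q$. Both reduce to checks on elementary tensors using $\mathcal E_\ii(ax,b\va) = (a\di b)\va(x)$ and bilinearity, but one has to be careful that $(\rr^F)_\ii$ is genuinely well-defined on $\cK F$ (this needs (i), so that $\rr^y\in\CB(E,G)$ and the assignment $by\mapsto b\,\rr^y$ makes sense into $\cK[\CB(E,G)]$) and that the resulting norm expression (7.1) is being applied to a legitimate element of the amplification. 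Everything else is a transcription of the classical adjoint-associativity argument, with $\di$ playing the role that the ordinary scalar/tensor product plays classically; there is no genuine analytic difficulty once the bookkeeping identities are in place.
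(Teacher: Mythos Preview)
Your proposal is correct and follows essentially the same route as the paper: part (i) via the formula $\rr^y_\ii(u)\di b=\rr_\ii(u,by)$ together with Proposition~4.3(ii), and parts (ii)--(iii) via the central identity $\mathcal E_\ii(u,(\rr^F)_\ii(v))=\rr_\ii(u,v)$ (the paper's formula (7.3)), from which the paper reads off the \emph{equality} $\|\rr^F\|_{cb}=\|\rr\|_{cb}$ directly rather than splitting it into two inequalities as you do. One small caution: your remark that ``bijectivity of $I_F$ is purely algebraic'' is not quite accurate, since surjectivity onto $\CB(F,\CB(E,G))$ requires checking that the bilinear $\rr$ associated to an arbitrary $\mathcal S\in\CB(F,\CB(E,G))$ is completely bounded---but your reverse-inequality computation does precisely this, so nothing is missing.
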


\begin{proof}
 First, distinguish the formula $\rr^y_\ii(u)\di b=\rr_\ii(u,by); u\in\cK E, b\in\cK$, easily 
verified on elementary tensors. If our $b$ is a rank 1 projection, it implies, by virtue of 
Proposition 4.3(ii), that $\|\rr^y_\ii(u)\|=\|\rr_\ii(u,by)\|\le\|\rr\|_{\ii}\|u\|\|by\|= 
\|\rr\|_{\ii}\|u\|\|y\|.$ This gives (i). 

Now we may speak about the operator $\rr^F_\ii\colon\cK F\to\cK[\CB(E,G)]$. This time we shall use 
the the formula 
\begin{align} 
 \mathcal{E}_\ii(u,\rr^F_\ii(v))=\rr_\ii(u,v); u\in\cK E, v\in\cK F,
\end{align}
also easily verified on elementary tensors in respective amplifications. Together with (7.1), it 
implies that for $v\in\cK F$ we have 
\[
\|\rr^F_\ii(v)\|=\sup\{\|\mathcal{E}_\ii(u,\rr^F_\ii(v))\|\colon u\in\cK E;\; \|u\|\le1\}=
\]
\[
\sup\{\|\rr_\ii(u,v)\|\colon u\in\cK E;\; \|u\|\le1\}. 
\]

Consequently, $\rr^F_\ii$ is a bounded operator, and we obviously have $\|\rr^F_\ii\|=\|\rr_\ii\|$. 
This gives (ii), and also the equality $\|\rr^F\|_{cb}=\|\rr\|_{cb}$. 

Thus, the operator $I_F\colon \mathcal{CB}(E\times F,G)\to\CB(F,\CB(E,G))$ is well defined and 
isometric. To conclude the proof of the assertion (iii) we shall show that it is surjective. 

\smallskip
Take $\mathcal{S}\in\CB(F,\CB(E,G))$ and set $\rr\colon E\times F\to G\colon 
(x,y)\mapsto[\mathcal{S}(y)](x)$. Clearly, $\rr$ is bounded, and $\rr^F=\mathcal{S}$. Therefore our 
task is to verify that $\rr$ is completely bounded. But the formula (7.3) is obviously valid, if we 
replace $\rr^F$ by ${\cal S}$, hence $\|\rr_\ii(u,v)\|=\|\mathcal{E}_\ii(u,\mathcal{S}_\ii(v))\|$. 
Finaly, it follows from (7.1) that 
$\|\mathcal{E}_\ii(u,\mathcal{S}_\ii(v))\|\le\|u\|\|\mathcal{S}_\ii(v)\|\le\|\mathcal{S}\|_\ii\|u\|\|v\|$, 
and we are done. 
\end{proof}

A similar, up to obvious modifications, argument provides the `twin' isometric isomorphism 
$I_E:\mathcal{CB}(E\times F,G)\to\CB(E,\CB(F,G))$, well defined by taking ( again exactly as in the 
``classical'' context) the bioperator $\rr$ to the operator $\rr^E:x\mt\rr^x$, where $\rr^x:F\to G$ 
acts as $y\mt\rr(x,y)$. 

Now recall that, by virtue of the universal property of the projective tensor product of 
$PQ$--spaces, we can identify the spaces $\CB(E\times F,G)$ and $\CB(E\ot_{pop}F,G)$ by means of 
the isometric isomorphism,
taking a bioperator to its linearization. Therefore, as an immediate corollary of the previous 
proposition, we  obtain the following `proto-quantum' version of the so-called law of adjoint 
associativity in classical functional analysis. (As to the `classical' formulation, see,
e.g.,~\cite[Ch.6.1]{heb2}). 

\begin{theorem}
 There exists an isometric isomorphism \\ 
 $\mathcal{I}_F:\CB(E\ot_{pop}F,G)\to\CB(F,\CB(E,G))$, uniquely 
 determined by the equality 
\[
 ([\mathcal{I}_F(\va)]y)(x)=\va(x\otimes y). 
\]
\end{theorem}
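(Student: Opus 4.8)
The plan is to combine the two results that immediately precede the statement: the isometric isomorphism $\CB(E\times F,G)\simeq\CB(E\ot_{pop}F,G)$ coming from the universal property of $\ot_{pop}$ (Theorem 5.2), and the isometric isomorphism $I_F:\CB(E\times F,G)\to\CB(F,\CB(E,G))$ of Theorem 7.5. Concretely, write $L:\CB(E\ot_{pop}F,G)\to\CB(E\times F,G)$ for the map sending an operator $R$ to the bioperator $R\circ\vartheta:(x,y)\mapsto R(x\ot y)$; by Theorem 5.2 this $L$ is an isometric isomorphism (its inverse is linearization, $\rr\mapsto R$ with $R(x\ot y)=\rr(x,y)$, and $\|R\|_{cb}=\|\rr\|_{cb}$ by the last clause of Definition 5.1). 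Then set $\mathcal{I}_F:=I_F\circ L$. As a composite of two isometric isomorphisms, $\mathcal{I}_F$ is itself an isometric isomorphism, and this is the entire content of the theorem apart from checking the stated formula.

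So the remaining step is bookkeeping: trace an element $\va\in\CB(E\ot_{pop}F,G)$ through the composite and verify $([\mathcal{I}_F(\va)]y)(x)=\va(x\ot y)$. First, $L(\va)$ is the bioperator $\rr:E\times F\to G$ with $\rr(x,y)=\va(x\ot y)$. Next, by the definition of $I_F$ in Theorem 7.5, $I_F(\rr)=\rr^F$, where $\rr^F(y)=\rr^y$ and $\rr^y(x)=\rr(x,y)$. Hence $([\mathcal{I}_F(\va)]y)(x)=(\rr^F(y))(x)=\rr^y(x)=\rr(x,y)=\va(x\ot y)$, which is exactly the asserted identity. Uniqueness of $\mathcal{I}_F$ subject to that identity is automatic: the elements $x\ot y$ span (a dense subspace of) $E\ot_{pop}F$, so an operator on $E\ot_{pop}F$ is determined by its values on them, and therefore the displayed formula pins down both $\mathcal{I}_F(\va)$ and, running this backwards, $\va$ itself; alternatively one simply invokes that $L$ and $I_F$ are bijections.

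I would present this as a short proof: recall the two isomorphisms, define $\mathcal{I}_F$ as their composite, observe that compositions of isometric isomorphisms are isometric isomorphisms, and then do the one-line diagram chase above to identify the formula. There is essentially no obstacle here — all the analytic work (that $\ot_{pop}$ linearizes completely bounded bioperators with preservation of the $cb$-norm, and that currying is a complete isometry onto $\CB(F,\CB(E,G))$) has already been done in Theorems 5.2 and 7.5. The only point demanding a word of care is making sure the identification $\CB(E\times F,G)\simeq\CB(E\ot_{pop}F,G)$ is indeed \emph{isometric} and not merely algebraic, but that is precisely the clause ``$\|R\|_{cb}=\|\rr\|_{cb}$'' built into the definition of the projective tensor product, so it is available for free. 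The analogous statement with $\mathcal{I}_E:\CB(E\ot_{pop}F,G)\to\CB(E,\CB(F,G))$ follows the same way, using the ``twin'' isomorphism $I_E$ recorded after Theorem 7.5.
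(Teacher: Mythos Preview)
Your proposal is correct and matches the paper's own argument essentially verbatim: the paper states Theorem 7.4 as an immediate corollary, obtained by composing the isometric identification $\CB(E\ot_{pop}F,G)\simeq\CB(E\times F,G)$ from the universal property of $\ot_{pop}$ with the isometric isomorphism $I_F$ of the preceding theorem (which is Theorem 7.3 in the paper's numbering, not 7.5). Your additional remarks on the formula check and on $\mathcal{I}_E$ are also in line with what the paper records.
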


\bigskip
A similar argument provides an isometric isomorphism 
$\mathcal{I}_E:\CB(E\ot_{pop}F,G)\to\CB(E,\CB(F,G))$ by means of the equality 
$([\mathcal{I}_F(\va)]x)(y)=\va(x\otimes y)$. 

\begin{remark}
 In fact, the operators ${\cal I}_F$ and ${\cal I}_F$ are {\it complete} isometric isomorphisms. 
As to ${\cal I}_F$, one can prove this in the following way. First, we identify, up to a natural 
complete isometric isomorphism, $\cK[\mathcal{CB}(E\ot_{pop}F,G)]$ with 
$\mathcal{CB}(E\ot_{pop}F,\cK G)$ and $\cK[\CB(F,\CB(E,G))]$ with $\CB(F,\CB(E,\cK G))$, and then 
apply Theorem 7.4 to the triple $E,F,\cK G$. Here $\cK G$ is equipped with a $PQ$-norm by means of 
the embedding of $\cK[\cK G]$ into $\cK G$, well defined by taking $a[bz]$ to $(a\di b)z; 
a,b\in\cK, z\in G$. The details are given in\cite[Ch.8.8]{heb2} in the context of $Q$--spaces, but 
the argument is valid, up to some minor modifications, for $PQ$--spaces as well. 
\end{remark}

\section{Comparison of proto-operator-projective and operator-projective tensor products}
In conclusion, we consider the relationship between the introduced tensor product and the 
well-known operator-projective tensor product of operator spaces. We recall that the latter 
linearizes completely bounded bilinear operators within the class of $Q$--spaces which is 
essentially narrower than the class of $PQ$--spaces. Its initial definition was given in terms of 
an explicit construction (cf., e.g., the textbook~\cite[p. 124]{efr}), which after the translation 
from the `matrix' to the `non-coordinate' language sounds as follows. 

Let $E, F$ be linear spaces. It is not difficult to show that every $U\in\cK(E\ot F)$ can be 
expressed with the help of a `single diamond', namely as $a\cd(u\di v)\cd b; a,b\in\cK, u\in\cK E, 
v\in\cK F$. Thus, we can introduce the number 
\begin{align}
\|U\|_{op}=\inf\{\|a\|\|u\|\|v\|\|b\|\}.
\end{align}
where the infimum is taken over all possible representations of $U$ in the form $a\cd(u\di v)\cd 
b$. 

If $E, F$ are $Q$--spaces, then $\|\cd\|_{op}$ is an $Q$--norm on $E\ot F$, and the $Q$--space 
$E\ot_{op}F:=(E\ot F,\|\cd\|_{op})$, together with $\vartheta$, possess the universal property, 
characteristic for an operator-projective tensor product (see, e.g.,~\cite[Ch.7.2]{heb2}). 

It is known that within the class of quantum spaces the norm $\|U\|_{op}$ coincides with the norm 
$\|U\|_{pop}$, given by the formula (5.3) ({\it ibidem}). The difference lies in another corner: 
outside this class the former number is, generally speaking, essentially greater than the latter 
number. This can be demonstrated by the following example. 

\medskip
Set $E:=F:=\ell_1$, where by $\ell_1$ we denote, for brevity, the $PQ$--space 
$L_1(\N,{^{(\ii)}}\co)$ with the counting measure on $\N$, which is a particular case of the 
$PQ$--spaces $L_p(\cd)$ from Section 3.

Denote by $e_k\in\ell_1; k=1,2,\dots$ the sequence
$(...,0,1,0,...)$ with 1 as the $k-th$ coordinate, fix $n\in\N$ and choose arbitrary pairwise 
orthogonal rank 1 projections $P_k\in\cK; k=1,...,n$, in $\cK$. Finally, take in 
$\cK(\ell_1\ot_{pop}\ell_1)$ the element 
$$
V_n:=\sum_{k=1}^nP_k(e_k\ot e_k).
$$
\begin{proposition}
We have $\|V_n\|_{pop}=n$, whereas $\|V_n\|_{op}=n^2$. 
\end{proposition}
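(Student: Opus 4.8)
The plan is to handle the two norms by quite different routes: $\|V_n\|_{pop}$ via the structure theory of Section~6, and $\|V_n\|_{op}$ by a direct estimate on single-diamond representations.

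For $\|V_n\|_{pop}=n$ I would argue as follows. Since $\ell_1=L_1(\N,{^{(\ii)}}\co)$ and, by Corollary~6.3, ${^{(\ii)}}\co\ot_{pop}{^{(\ii)}}\co={^{(\ii)}}\co$, Theorem~6.8 provides a complete isometry $I\colon\ell_1\ot_{pop}\ell_1\to L_1(\N\times\N,{^{(\ii)}}\co)$. Evaluating $I$ on the relevant elementary tensors shows that $I_\ii(V_n)$ is the element $V_n'$ of $\cK(L_1(\N\times\N,{^{(\ii)}}\co))$ whose value at the atom $(k,k)$ is $P_k$ for $k\le n$ and which vanishes at all other atoms. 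By the recipe of Example~4.1 the norm of such an element is $\sum_{k=1}^n\|P_k\|_\ii$, which is $n$. Hence $\|V_n\|_{pop}=\|I_\ii(V_n)\|=n$.

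For $\|V_n\|_{op}\le n^2$ I would display an explicit single-diamond representation. Write $P_k=\eta_k\circ\eta_k$ with $\{\eta_k\}_{k=1}^n$ orthonormal, and put $u:=v:=\sum_{k=1}^n P_k e_k\in\cK\ell_1$. Then $u\di v=\sum_{k,l}(P_k\di P_l)(e_k\ot e_l)$, and each $P_k\di P_l=(\eta_k\di\eta_l)\circ(\eta_k\di\eta_l)$ is a rank-one projection, these projections being pairwise orthogonal. Taking for $a,b\in\cK$ the norm-one operators that act ``diagonally'' on the orthonormal system $\{\eta_k\di\eta_l\}$, namely $a(\eta_k\di\eta_l)=\de_{kl}\eta_k$, $b^*(\eta_k\di\eta_l)=\de_{kl}\eta_k$, and vanish on its orthogonal complement, a short computation gives $a\cd(u\di v)\cd b=\sum_k P_k(e_k\ot e_k)=V_n$. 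By Example~4.1, $\al(u)$ is the function $m\mapsto P_m$ supported on $\{1,\dots,n\}$, so $\|u\|=\|v\|=n$, and therefore $\|V_n\|_{op}\le\|a\|\,\|u\|\,\|v\|\,\|b\|=n^2$.

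The crux is the reverse inequality $\|V_n\|_{op}\ge n^2$. Let $V_n=a\cd(u\di v)\cd b$ be any single-diamond representation. For each $k=1,\dots,n$ let $f_k\colon\ell_1\to{^{(\ii)}}\co$ be the $k$-th coordinate functional, which is completely contractive by Proposition~3.2 (recall $\ell_1$ is $\ii$-concave), and let $\va_k\colon\ell_1\ot_{pop}\ell_1\to{^{(\ii)}}\co$ be the linearization of $f_k\times f_k$, again completely contractive. Applying the bimodule morphism $(\va_k)_\ii$ to the representation, and using formula~(5.4) together with the identity $(f\times g)_\ii(u,v)=f_\ii(u)\di g_\ii(v)$, I would obtain
\[
P_k=(\va_k)_\ii(V_n)=a\cd(u_k\di v_k)\cd b,\qquad u_k:=(f_k)_\ii(u),\ v_k:=(f_k)_\ii(v)\in\cK_\ii.
\]
Proposition~4.6 gives $\|u_k\di v_k\|_\ii=\|u_k\|_\ii\|v_k\|_\ii$, so $1=\|P_k\|_\ii\le\|a\|\,\|b\|\,\|u_k\|_\ii\|v_k\|_\ii$ for all $k\le n$. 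On the other hand, Example~4.1 identifies $u_k$ with $\al(u)(k)$, whence $\|u\|=\sum_m\|u_m\|_\ii\ge\sum_{k=1}^n\|u_k\|_\ii$, and likewise $\|v\|\ge\sum_{k=1}^n\|v_k\|_\ii$. Cauchy--Schwarz then yields
\[
\|u\|\,\|v\|\ \ge\ \Bigl(\sum_{k=1}^n\|u_k\|_\ii\Bigr)\Bigl(\sum_{k=1}^n\|v_k\|_\ii\Bigr)\ \ge\ \Bigl(\sum_{k=1}^n\sqrt{\|u_k\|_\ii\|v_k\|_\ii}\Bigr)^2\ \ge\ \frac{n^2}{\|a\|\,\|b\|},
\]
so that $\|a\|\,\|u\|\,\|v\|\,\|b\|\ge n^2$; as the representation was arbitrary, $\|V_n\|_{op}\ge n^2$. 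The main obstacle is this step: one must find test functionals (the diagonal coordinate functionals) that simultaneously resolve all $n$ summands $P_k$ and, through the multiplicativity of Proposition~4.6, force them into the product $\|u\|\,\|v\|$ multiplicatively — exactly the asymmetry with $\|V_n\|_{pop}$, where the same summands add up only linearly because $\cK\ell_1$ is an $\ell_1$-sum of copies of $\cK_\ii$.
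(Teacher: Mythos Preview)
Your proof is correct and follows essentially the same route as the paper: Theorem~6.8/6.9 for $\|V_n\|_{pop}$, the same explicit single-diamond representation for $\|V_n\|_{op}\le n^2$, and the same coordinate extraction (you do it via the completely contractive functionals $f_k$, the paper via a direct expansion and linear-independence comparison) to obtain $P_k=a(u_k\di v_k)b$ and hence $\|a\|\,\|b\|\,\|u_k\|\,\|v_k\|\ge1$. Your Cauchy--Schwarz step is equivalent to the paper's $(\sum_k\lm_k)(\sum_k\lm_k^{-1})\ge n^2$ with $\lm_k=\|a\|\,\|u_k\|$; the functional-analytic extraction is a mildly cleaner packaging of the same argument.
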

\begin{proof}
 To show the first equality, we use Theorem 6.9. As a particular case, it provides a completely 
 isometric isomorphism 
 $I:\ell_1\widehat\ot_{pop}\ell_1\to L_1(\N\times\N,{^{(\ii)}}\co\widehat\ot_{pop}{^{(\ii)}}\co)$.
 Clearly, the latter $PQ$--space can be identified with $L_1(\N\times\N,{^{(\ii)}}\co)$.
Thus, we can say that  $I_\ii(V_n)=\sum_{k=1}^nP_k{\bar e_k}$, where ${\bar e_k}$ is the function ( 
= double sequence) taking $(k,k)$ to 1 and taking other pairs in $\N\times\N$ to 0. Therefore the 
definition of $PQ$--norm on \\ $L_1(\N\times\N,{^{(\ii)}}\co)$ implies that $\|V_n\|_{pop}$ is the 
norm of the $\cK$-valued function in $L_1(\N\times\N,\cK)$, taking $(k,k)$ to $P_k$ in the case, 
when $1\le k\le n$ and taking other pairs in $\N\times\N$ to 0. This norm is, of course, 
$\sum_{k=1}^n\|P_k\|=n$. 

\medskip
Turn to the second equality. To begin with, we shall display the representation of $V_n$ in the 
form (8.1), such that $\|a\|\|u\|\|v\|\|b\|=n^2$. 

Take $\widetilde e_k\in L$ such that for every $k$ we have $P_k=\widetilde e_k\circ\widetilde e_k$. 
Further, 
take $u=v=\sum_{k=1}^nP_ke_k$, $a=\sum_{k=1}^n\widetilde e_k\circ(\widetilde e_k\di\widetilde e_k)$ 
and $b=\sum_{k=1}^n(\widetilde e_k\di\widetilde e_k)\circ\widetilde e_k$. A simple calculation 
shows that indeed $a\cd(u\di v)\cd b=V_n$, and it remains to observe that $\|a\|=\|b\|=1$ whereas 
$\|u\|=\|v\|=n$. 

Now we must show that for every representation of $V_n$ as $a\cd(u\di v)\cd b$ we have 
$\|a\|\|u\|\|v\|\|b\|\ge n^2$. 

Since $u,v\in\cK\ell_1$, it is easy to observe that $u$ can be represented as 
$u=\sum_{k=1}^na_ke_n+\sum_{l=1}^{m_1}a_l'f_l$ for some $m_1$, and $v$ can be represented as 
$v=\sum_{k=1}^nb_ke_n+\sum_{l=1}^{m_2}b_l'g_l$ for some $m_2$, where $a_k,a_l',b_k,b_l'\in\cK$, the 
sequences $f_l,g_l\in\ell_1$ begin with $n$ zeroes and the systems $f_l;l=1,...,m_1$ and 
$g_l;l=1,...,m_2$ are linearly independent. Consequently, we have $V_n=a\cd W\cd b$, where $W$ is
\[
\sum_{k=1}^n\sum_{l=1}^n(a_k\di b_l)(e_k\ot e_l)+
\sum_{k=1}^n\sum_{i=1}^{m_2}(a_k\di b_i')(e_k\ot g_i)+
\]
\[
\sum_{k=1}^n\sum_{j=1}^{m_1}(a_j'\di e_k)(f_j\ot e_k)+
\sum_{j=1}^{m_1}\sum_{i=1}^{m_2}(a_j'\di b_i')(f_j\ot g_i).
\]
But at the same time $V_n$ is $\sum_{k=1}^nP_k(e_k\ot e_k)$ and the system of elements in 
$\ell_1\ot\ell_1$, consisting of all $e_k\ot e_l, e_k\ot g_i,f_j\ot e_k$ and $f_j\ot g_i$ is 
obviously linearly independent. Therefore, comparing both representations of $V_n$, we see that 
$a(a_k\di b_k)b=P_k;k=1,\dots,n$, and all operators $a(a_k\di b_l)b$, where $k\ne l$, as well as 
all $a(a_k\di b_i')b, a(a_j'\di e_k)b, a(a_j'\di b_i')b$, are zeroes. In particular, we have 
$$
1=\|P_k\|=\|a(a_k\di b_k)b\|\le\|a\|\|a_k\di b_k\|\|b\|=\|a\|\|a_k\|\|b_k\|\|b\|
$$
for every $k$. 

Embedding $\cK\ell_1$ into $\ell_1(\cK)$ by the recipe in Section 4, we see that the indicated 
forms of $u$ and $v$ imply that $\|u\|\ge\sum_{k=1}^n\|a_k\|$ and $\|v\|\ge\sum_{k=1}^n\|b_k\|$. 
Set $\lm_k:=\|a\|\|a_k\|$ and note that $\|b_k\|\|b\|\ge\lm^{-1}$. Therefore we have 
$$
\|a\|\|u\|\|v\|\|b\|\ge\left(\sum_{k=1}^n\lm_k\right)\left(\sum_{k=1}^n\lm_k^{-1}\right)=
\sum_{k,l=1}^n\lm_k\lm_l^{-l};
$$
 the obvious estimate $\lm_k\lm_l^{-l}+\lm_k^{-1}\lm_l\ge2$ implies that the latter sum is $\ge n^2$. 
 \end{proof}

\medskip
{\rm This proposition shows, in particular, that outside the class of $Q$--spaces the function 
$U\mt\|U\|_{op}; U\in\cK(E\ot F)$ is not bound to be a norm. Indeed, let $m$ be a natural number 
such that $1\le m<n$. Set $V:=V_m, W:=V_n-V_m$. Then practically the same argument shows that 
$\|W\|_{op}=(n-m)^2$, 
 hence contrary to the triangle inequality, we have
 $$
 \|V+W\|_{op}=\|V_n\|_{op}=n^2>m^2+(n-m)^2=\|V\|_{op}+\|W\|_{op}.
$$

\bigskip
It is a pleasure for the author to thank the referee, whose questions and comments 
contributed to the improvement and enrichment of the present paper. 

 }

\begin{flushleft}
Moscow State (Lomonosov) University\\ Moscow, 111991, Russia\\
E-mail address: helemskii@rambler.ru 
\end{flushleft}

\end{document}